\theoremstyle{definition}
\newtheorem{definition}{Definition}[section]
\newtheorem{example}[definition]{Example}
\newtheorem{openproblem}[definition]{Open problem}
\newtheorem{remark}[definition]{Remark}
\theoremstyle{plain}
\newtheorem{theorem}[definition]{Theorem}
\newtheorem{lemma}[definition]{Lemma}
\newtheorem{proposition}[definition]{Proposition}
\newtheorem{corollary}[definition]{Corollary}
\newcommand{\Lat}{\mathbb{L}}
\newcommand{\BA}{\mathbb{BA}}
\newcommand{\BI}{\mathbb{BI}}
\newcommand{\KA}{\mathbb{KA}}
\newcommand{\KL}{\mathbb{PKA}}
\newcommand{\POML}{\mathbb{POML}}
\newcommand{\BZ}{\mathbb{BZL}}
\newcommand{\PBZs}{\mathbb{PBZL}^{\ast }}
\newcommand{\OL}{\mathbb{OL}}
\newcommand{\OML}{\mathbb{OML}}
\newcommand{\MOL}{\mathbb{MOL}}
\newcommand{\AOL}{\mathbb{AOL}}
\newcommand{\MOD}{\mathbb{MOD}}
\newcommand{\SAOL}{\mathbb{SAOL}}
\newcommand{\DIST}{\mathbb{DIST}}
\newcommand{\SDM}{\mathbb{SDM}}
\newcommand{\WSDM}{\mathbb{WSDM}}
\newcommand{\PBZ}{PBZ$^{\ast }$}
\newcommand{\modular}{\mbox{\textup{MOD}}}
\newcommand{\dist}{\mbox{\textup{DIST}}}
\newcommand{\sk}{\mbox{\textup{SK}}}
\newcommand{\sdm}{\mbox{\textup{SDM}}}
\newcommand{\wsdm}{\mbox{\textup{WSDM}}}
\newcommand{\jo}{\mbox{\textup{J0}}}
\newcommand{\ji}{\mbox{\textup{J1}}}
\newcommand{\cn}{\mbox{\textup{FxPt}}}
\def\T{{\mathbb T}}
\def\O{\mbox{\textup{D2OML}}}
\def\R{\mbox{\textup{D2OL$\wedge$}}}
\def\RV{\mbox{\textup{D2OL$\vee$}}}
\def\N{{\mathbb N}}
\def\K{{\mathbb K}}
\def\I{{\rm I}}
\def\o{{\rm O}}
\def\s{{\mathbb S}}
\def\G{{\mathbb G}}
\def\U{{\mathbb U}}
\def\V{{\mathbb V}}
\def\W{{\mathbb W}}
\def\X{{\mathbb X}}
\def\Y{{\mathbb Y}}
\def\C{{\mathbb C}}
\def\D{{\mathbb D}}
\def\H{{\mathrm H}}
\def\S{{\mathrm S}}
\def\P{{\mathrm P}}
\def\1{\textcircled{1}}
\def\2{\textcircled{2}}
\numberwithin{equation}{section}
\begin{document}
\title{\vspace*{-70pt}Subreducts and Subvarieties of \PBZ --lattices}

\author{Claudia Mure\c san\\ {\small \bf cmuresan@fmi.unibuc.ro, claudia.muresan@g.unibuc.ro, c.muresan@yahoo.com}\\ {\small \sc University of Bucharest, Faculty of Mathematics and Computer Science}}

\date{\today }

\maketitle

\begin{center}{\em Dedicated to Professor George Georgescu}\end{center}

\begin{abstract} {\em \PBZ --lattices} are bounded lattice--ordered structures endowed with two complements, called {\em Kleene} and {\em Brouwer}; by definition, they are the paraorthomodular Brouwer--Zadeh lattices in which the pairs of elements with their Kleene complements satisfy the Strong De Morgan condition. These algebras arise in the study of Quantum Logics and they form a variety $\PBZs $ which includes orthomodular lattices with an extended signature (with the two complements coinciding), as well as antiortholattices (whose Brouwer complements are trivial). The former turn out to have directly irreducible lattice reducts and, under distributivity, no nontrivial elements with bounded lattice complements. We establish a lattice isomorphism between the lattice of subvarieties of the variety $\SAOL $ generated by the antiortholattices with the Strong De Morgan property and the ordinal sum of the three--element chain with the lattice of subvarieties of the variety $\KL $ of pseudo--Kleene algebras, which also gives us axiomatizations for all subvarieties of $\SAOL $ from those of the subvarieties of $\KL $ and proves that the variety $\KL $ is generated by the class of the bounded involution lattice reducts of the members of $\SAOL $ and thus of those of any subvariety of $\PBZs $ that includes $\SAOL $, hence neither of these classes is a variety. We also obtain an infinity of pairwise disjoint infinite ascending chains of varieties of \PBZ --lattices, out of which one is formed of subvarieties of $\SAOL $ and another one from subvarieties of the variety of distributive \PBZ --lattices.
 
{\bf Keywords}: pseudo--Kleene algebra, orthomodular lattice, \PBZ --lattice, antiortholattice, (lattice of, relative axiomatizations for) subvarieties.

{\bf Mathematics Subject Classification 2010}: 03G10, 08B15, 08A30, 03G12.\end{abstract}

\section{Introduction}

{\em \PBZ --lattices} are bounded lattices endowed with two unary operations: an involution $\cdot ^{\prime }$, called {\em Kleene complement}, which satisfies the {\em Kleene condition}: $x\wedge x^{\prime }\leq y\vee y^{\prime }$, as well as paraorthomodularity, and the {\em Brouwer complement}, which reverses order, is smaller than the Kleene complement, and satisfies only one of the De Morgan laws, along with condition $(\ast )$, which is a weakening of the other De Morgan law (called {\em Strong De Morgan}), obtained from it by replacing one of the variables with the Kleene complement of the other.

The study of \PBZ --lattices originates in the foundations of quantum mechanics; they have been introduced in \cite{GLP} as abstractions for the sets of effects of complex separable Hilbert spaces endowed with the spectral order and two kinds of complements \cite{catnis,groote,mose}, and, from the previous such abstractions used in the unsharp approach to Quantum Logics, which include {\em effect algebras} \cite{fouben}, {\em quantum MV--algebras} \cite{giu} and {\em Brouwer--Zadeh posets} \cite{catmar}, they present the advantage of forming a variety, which we denote by $\PBZs $, as paraorthomodularity becomes an equational property under the other axioms of \PBZ --lattices.

$\PBZs $ includes the variety $\OML $ of {\em orthomodular lattices} considered with an extended signature, by endowing each orthomodular lattice with a second complement equalling their Kleene complement, and it also includes the proper universal class $\AOL $ of {\em antiortholattices}, which are, by definition, the \PBZ --lattices with all nonzero elements {\em dense}, i.e. having $0$ as their Brouwer complement. Antiortholattices are exactly the \PBZ --lattices in which $0$ and $1$ are the only elements whose Kleene complements are bounded lattice complements. We prove that, in distributive antiortholattices, moreover, $0$ and $1$ are the only elements that have bounded lattice complements; however, this property is not preserved in the non--distributive case. We also prove that the lattice reducts of antiortholattices are directly irreducible and further investigate direct irreducibility, as well as the lengths of two important subposets of the lattice reducts of \PBZ --lattices. In Theorem \ref{notgendist} we obtain an infinite ascending chain of subvarieties of the variety $\DIST $ of distributive \PBZ --lattices, which is included in the variety generated by antiortholattices.

The \PBZ --lattices with the $0$ meet--irreducible are exactly the antiortholattices that satisfy the Strong De Morgan condition; we prove that the variety $\SAOL $ they generate is also generated by the class of the antiortholattices with the $0$ strictly meet--irreducible, that is the ordinal sums of the two--element chain with pseudo--Kleene algebras and again the two--element chain (Theorem \ref{saol}). Our {\bf main theorem} (Theorem \ref{thegenop}) states that the operator that takes subvarieties $\V $ of the variety $\KL $ of pseudo--Kleene algebras to the subvarieties of $\SAOL $ generated by the ordinal sums of the two--element chain with members of $\V $ and again the two--element chain is a lattice isomorphism between the lattice of subvarieties of $\KL $ and the principal filter of the lattice of subvarieties of $\SAOL $ generated by the smallest non--orthomodular subvariety of $\PBZs $, and thus the lattice of subvarieties of $\SAOL $ is isomorphic to the glued sum of the three--element chain with the lattice of subvarieties of $\KL $. Hence the splitting pair in the lattice of subvarieties of $\KL $ formed of the variety of ortholattices and the variety of Kleene algebras is taken by this lattice isomorphism into a splitting pair in the lattice of subvarieties of $\SAOL $. From the axiomatizations of the subvarieties of $\KL $ we construct axiomatizations for the subvarieties $\V $ of $\SAOL $ in two different ways, depending on whether $\V $ belongs to the principal ideal or the principal filter determined by this splitting pair (Theorem \ref{theax}). We also prove that $\KL $ is generated by the class of the bounded involution lattice reducts of the members of $\SAOL $, hence also by the class of those of any variety of \PBZ --lattices that includes it (Corollary \ref{varredsaol}), therefore neither of these classes is a variety (Corollary \ref{birednotvar}), in contrast with the subvarieties of $\OML $. As a reinterpretation and strengthening of \cite[Corollary~$5.2$]{GLP}, we point out that the lattice reducts of antiortholattices satisfying the Strong de Morgan law generate the whole variety $\Lat $ of lattices, and we have further results for subvarieties of $\Lat $. Not all paraorthomodular pseudo--Kleene algebras are bounded involution lattice reducts of \PBZ --lattices, and not even all bounded lattices that are reducts of paraorthomodular pseudo--Kleene algebras are also bounded lattice reducts of \PBZ --lattices (Remark \ref{ppkasnotredpbzl}). The infinite ascending chain of subvarieties of $\DIST $, the well--known one of varieties of modular ortholattices and the image of this latter chain through the lattice isomorphism in Theorem \ref{thegenop} give us an infinity of pairwise disjoint infinite ascending chains of subvarieties of the variety generated by $\OML \cup \AOL $.

\section{Preliminaries}
\label{preliminaries}

In this paper, we are using the notations from \cite{GLP,PBZ2,rgcmfp,pbzsums}, along with conventions such as denoting, for any algebra ${\bf A}$, by $A$ the set reduct of ${\bf A}$; for brevity, if there no danger of confusion on the algebraic structure we are refering to, occasionally algebras will be denoted the same as their set reducts. The trivial (that is singleton) algebra will be considered subdirectly irreducible.

We denote by $\N $ the set of the natural numbers and by $\N ^*=\N \setminus \{0\}$. For any set $M$, $|M|$ will denote the cardinality of $M$, $\mathcal{P}(M)$ will be the set of the subsets of $M$, and ${\rm Part}(M)$ and ${\rm Eq}(M)$ will be the lattices of the partitions and the equivalences on $M$, respectively, where ${\rm Eq}(M)$ is ordered by the set inclusion, while the order $\leq $ of ${\rm Part}(M)$ is given by: for any $\pi ,\rho \in {\rm Part}(M)$, $\pi \leq \rho $ iff every class of $\rho $ is a union of classes from $\pi $, and $eq:{\rm Part}(M)\rightarrow {\rm Eq}(M)$ will be the canonical lattice isomorphism.

For any (bounded) lattice ${\bf L}$, $\prec $ will denote the  cover relation of ${\bf L}$, ${\bf L}^d$ will be the dual of ${\bf L}$ and, if ${\bf L}$ has a $0$, then the set of the atoms of ${\bf L}$ will be denoted by ${\rm At}({\bf L})$. For any $a,b\in L$, we denote by $[a,b]_L=[a)_L\cap (b]_L$ the interval of ${\bf L}$ bounded by $a$ and $b$, as well as any algebraic structure we consider on it. For all $n\in \N ^*$, we denote by ${\bf D}_n$ the $n$--element chain, regardless of the bounded lattice--ordered structure we consider on it.

Recall that, if ${\bf L}$ is a lattice and $x,y\in L$, then $(x,y)$ is called a {\em splitting pair} in ${\bf L}$ iff $y\nleq x$ and $L=(x]_L\cup [y)_L$.

Let $\V $ be a variety of algebras of a similarity type $\tau $, $\C \subseteq \V $, $\W $ a variety of similar algebras with reducts in $\V $, $\D $ a subclass of $\W $ and ${\bf A}$ and ${\bf B}$ members of $\W $. Then ${\rm Di}(\C )$ and ${\rm Si}(\C )$ will denote the class of the members of $\C $ which are directly irreducible and those that are subdirectly irreducible in $\V $, respectively. We will denote by $\T $ the trivial subvariety of $\V $, consisting solely of the trivial algebras from $\V $, that is its singleton members. We will denote by $\I_{\V }(\D )$, $\H _{\V }(\D )$, $\S _{\V }(\D )$ and $\P _{\V }(\D )$ the class of the isomorphic images, homomorphic images, subalgebras and direct products of the $\tau $--reducts of the members of $\D $, respectively, and $V_{\V }(\D )=\H _{\V }\S _{\V }\P _{\V }(\D )$ will denote the subvariety of $\V $ generated by the $\tau $--reducts of the members of $\D $; for any class operator $\o _{\V }$ and any ${\bf M}\in \D $, $\o _{\V }(\{{\bf M}\})$ will be streamlined to $\o _{\V }({\bf M})$. We will abbreviate by ${\bf A}\cong _{\V }{\bf B}$ the fact that the $\tau $--reducts of ${\bf A}$ and ${\bf B}$ are isomorphic. $({\rm Con}_{\V }({\bf A}),\cap ,\vee ,\Delta _A,\nabla _A)$ will be the bounded lattice of the congruences of the $\tau $--reduct of ${\bf A}$, and, for any $n\in \N ^*$ and any constants $\kappa _1,\ldots ,\kappa _n$ from $\tau $, we denote by ${\rm Con}_{\V \kappa _1,\ldots ,\kappa _n}({\bf A})=\{\theta \in {\rm Con}_{\V }({\bf A}):(\forall \, i\in [1,n])\, (\kappa _i^{\bf A}/\theta =\{\kappa _i\})\}$. If $\V $ is the variety of (bounded) lattices, then the index $\V $ will be eliminated from the notations above. $\Lambda (\V )$ will be the lattice of subvarieties of $\V $.

A straightforward consequence of \cite[Corollary $2$, p.$51$]{gralgu} is the fact that ${\rm Con}_{\W }({\bf A})$ is a bounded complete sublattice of ${\rm Con}_{\V }({\bf A})$, while ${\rm Con}_{\V \kappa _1,\ldots ,\kappa _n}({\bf A})$ is a complete sublattice of ${\rm Con}_{\V }({\bf A})$ and thus a bounded lattice \cite[Lemma 2.(iii)]{pbzsums}. An obvious consequence of the first of these two properties and the well--known fact that the $\tau $--reduct of ${\bf A}$ is subdirectly irreducible iff either ${\bf A}$ is trivial or $\Delta _A$ has a unique cover in ${\rm Con}_{\V }({\bf A})$ \cite{gralgu} is the fact that:

\begin{remark} With the notations above, if the $\tau $--reduct of ${\bf A}$ is subdirectly irreducible (in $\V $), then ${\bf A}$ is subdirectly irreducible (in $\W $). Hence, if we denote, for any $\C \subseteq \W $, by $\C _{\V }$ the class of the $\tau $--reducts of the members of $\C $, then,  for any $\C \subseteq \W $, we have ${\rm Si}(\C _{\V })\subseteq {\rm Si}(\C )_{\V }$.\label{sired}\end{remark}

\begin{remark} With the notations in Remark \ref{sired}, we may notice that, for any $\C \subseteq \W $, $\H _{\W }(\C )_{\V }\subseteq \H _{\V }(\C _{\V })$, $\S _{\W }(\C )_{\V }\subseteq \S _{\V }(\C _{\V })$ and $\P _{\W }(\C )_{\V }=\P _{\V }(\C _{\V })$, hence $V _{\W }(\C )_{\V }\subseteq V_{\V }(\C _{\V })$ and, if $\C $ is closed w.r.t. direct products, in particular if $\C $ is a subvariety of $\W $, then $V_{\V }(\C _{\V })=\H _{\V }\S _{\V }(\P _{\W }(\C )_{\V })=\H _{\V }\S _{\V }(\C _{\V })$.\label{varred}\end{remark}

If $t$ and $u$ are $n$--ary terms over $\tau $ for some $n\in \N ^*$ and $A_1,\ldots ,A_n$ are subsets of $A$, then we denote by ${\bf A}\vDash _{A_1,\ldots ,A_n}t(x_1,\ldots ,x_n)\approx u(x_1,\ldots ,x_n)$ the fact that $t^{\bf A}(a_1,\ldots ,a_n)\approx u^{\bf A}(a_1,\ldots ,a_n)$ for all $a_1\in A_1,\ldots ,a_n\in A_n$, where $x_1,\ldots ,x_n$ are the variables in their order of appearance in the equation $t\approx u$.

We denote by ${\rm Mi}({\bf L})$ the set of the meet--irreducible elements of a lattice ${\bf L}$.

Let ${\bf L}$ be a lattice with top element $1^{\bf L}$ and ${\bf M}$ be a lattice with bottom element $0^{\bf M}$. Recall that the {\em ordinal sum} (also called {\em glued sum}) of ${\bf L}$ with ${\bf M}$ is the lattice ${\bf L}\oplus {\bf M}$ (with underlying set $L\oplus M$) obtained by stacking ${\bf M}$ on top of ${\bf L}$ and glueing the top element of ${\bf L}$ together with the bottom element of ${\bf M}$. Note that, for any $\alpha \in {\rm Con}({\bf L})$ and any $\beta \in {\rm Con}({\bf M})$, if we denote by $\alpha \oplus \beta $ the equivalence on $L\oplus M$ whose restrictions to $L$ and $M$ are $\alpha $ and $\beta $, respectively, then $\alpha \oplus \beta \in {\rm Con}({\bf L}\oplus {\bf M})$. Clearly, the operation $\oplus $ on bounded lattices is associative, and so is the operation $\oplus $ on the congruences of such lattices. Note that the map $(\alpha ,\beta )\mapsto \alpha \oplus \beta $ is a lattice isomorphism from ${\rm Con}({\bf L}\times {\bf M})\cong {\rm Con}({\bf L})\times {\rm Con}({\bf M})$ to ${\rm Con}({\bf L}\oplus {\bf M})$.

Now let ${\bf L}$ and ${\bf M}$ be nontrivial bounded lattices. Recall that the horizontal sum of ${\bf L}$ with ${\bf M}$ is the non--trivial bounded lattice ${\bf L}\boxplus {\bf M}$ (with underlying set $L\boxplus M$) obtained by glueing the bottom elements of ${\bf L}$ and ${\bf M}$ together, glueing their top elements together and letting all other elements of ${\bf L}$ be incomparable to every other element of ${\bf M}$. Note that the horizontal sum of nontrivial bounded lattices is commutative and associative, it has ${\bf D}_2$ as a neutral element and it can be generalized to arbitrary families of nontrivial bounded lattices.

See the rigorous definitions of the ordinal and horizontal sums in \cite{pbzsums,eunoucard,eucardbi,euinfbi}.

Now let us take a look at the algebras and varieties studied in the following sections. See \cite{GLP,PBZ2,rgcmfp,pbzsums} for more details on the following notions.

Recall that a \emph{bounded involution lattice} (in brief, \emph{BI--lattice}) is an algebra $\mathbf{L}=(L,\wedge,\vee,\cdot ^{\prime},0,1)$ of type $(2,2,1,0,0)$ such that $(L,\wedge,\vee,0,1)$ is a bounded lattice and $\cdot ^{\prime}:L\rightarrow L$ is an order--reversing operation that satisfies $a^{\prime\prime}=a$ for all $a\in L$. This makes $\cdot ^{\prime}$ a dual lattice automorphism of $\mathbf{L}$, called {\em involution}.

For any (bounded) lattice--ordered algebra ${\bf A}$, we denote by ${\bf A}_l$ the (bounded) lattice reduct of ${\bf A}$. For any algebra ${\bf A}$ having a BI--lattice reduct, we denote that reduct by ${\bf A}_{bi}$. If $\C $ is a class of (bounded) lattice--ordered algebras and $\D $ is a class of algebras having BI--lattice reducts, then we denote by $\C _L=\{{\bf L}_l:{\bf L}\in \C \}$ and by $\D _{BI}=\{{\bf L}_{bi}:{\bf L}\in \D \}$.

A \emph{pseudo--Kleene algebra} is a BI--lattice $\mathbf{L}$ satisfying $a\wedge a^{\prime}\leq b\vee b^{\prime}$ for
all $a,b\in L$. The involution of a pseudo--Kleene algebra is called {\em Kleene complement}. Distributive pseudo--Kleene algebras are called \emph{Kleene algebras} or \emph{Kleene
lattices}.

Let $\mathbf{L}$ be a BI--lattice. Then we denote by $S(\mathbf{L})=\{x\in L:x\vee x^{\prime }=1\}$ and call the elements of $S(\mathbf{L})$ {\em sharp elements} of $\mathbf{L}$. The BI--lattice $\mathbf{L}$ is called an {\em ortholattice} iff all its elements are sharp, and it is called an {\em orthomodular lattice} iff, for all $a,b\in L$, $a\leq b$ implies $b=(b\wedge a^{\prime})\vee a$. By taking $b=1$ in the previous implication, we obtain that any orthomodular lattice is an ortholattice. Note, also, that any modular ortholattice is an orthomodular lattice, and that Boolean algebras are exactly the distributive ortholattices. Clearly, any ortholattice is a pseudo--Kleene algebra.
If ${\bf M}$ is a bounded lattice, ${\bf K}$ is a BI--lattice and $f$ is a dual lattice isomorphism from ${\bf M}$ to ${\bf M}^d$, then the ordinal sum ${\bf M}\oplus {\bf K}_l\oplus {\bf M}^d$ becomes a BI--lattice denoted ${\bf M}\oplus {\bf K}\oplus {\bf M}^d$ when endowed with the involution $\cdot ^{\prime {\bf M}\oplus {\bf K}\oplus {\bf M}^d}$ defined by: $\cdot ^{\prime {\bf M}\oplus {\bf K}\oplus {\bf M}^d}\mid _M=f$, $\cdot ^{\prime {\bf M}\oplus {\bf K}\oplus {\bf M}^d}\mid _K=\cdot ^{\prime {\bf K}}$ and $\cdot ^{\prime {\bf M}\oplus {\bf K}\oplus {\bf M}^d}\mid _{M^d}=f^{-1}$. The BI--lattice ${\bf M}\oplus {\bf D}_1\oplus {\bf M}^d$ will be denoted ${\bf M}\oplus {\bf M}^d$, just as its bounded lattice reduct. Furthermore, ${\bf K}$ is a pseudo--Kleene algebra iff ${\bf M}\oplus {\bf K}\oplus {\bf M}^d$ is a pseudo--Kleene algebra.

If ${\bf A}$ and ${\bf B}$ are nontrivial BI--lattices, then the horizontal sum ${\bf A}_l\boxplus {\bf B}_l$ becomes a BI--lattice ${\bf A}\boxplus {\bf B}$ when endowed with the involution $\cdot ^{\prime {\bf A}\boxplus {\bf B}}$ defined by: $\cdot ^{\prime{\bf A}\boxplus {\bf B}}\mid _A=\cdot ^{\prime {\bf A}}$ and $\cdot ^{\prime {\bf A}\boxplus {\bf B}}\mid _B=\cdot ^{\prime {\bf B}}$, which makes the BI--lattices ${\bf A}$ and ${\bf B}$ subalgebras of ${\bf A}\boxplus {\bf B}$. Clearly, ${\bf A}\boxplus {\bf B}$ is a pseudo--Kleene algebra iff ${\bf A}$ and ${\bf B}$ are pseudo--Kleene algebras and at least one of them is an ortholattice.

A BI--lattice $\mathbf{L}$ is said to be \emph{paraorthomodular} iff, for all $a,b\in L$, if $a\leq b$ and $a^{\prime}\wedge b=0$, then $a=b$. Algebras with BI--lattice reducts will be said to be orthomodular, respectively paraorthomodular iff their BI--lattice reducts are such. Note that any orthomodular lattice is a paraorthomodular BI--lattice and any paraorthomodular ortholattice is orthomodular. However, there are paraorthomodular pseudo--Kleene algebras that are not orthomodular, for instance the diamond ${\bf M}_3={\bf D}_2^2\boxplus {\bf D}_3$ as a horizontal sum of BI--lattices, specifically of the Boolean algebra ${\bf D}_2^2$ and the Kleene chain ${\bf D}_3$, which is clearly not an ortholattice. Let us also note that, for instance, the horizontal sum of BI--chains ${\bf N}_5={\bf D}_3\boxplus {\bf D}_4$ is not a pseudo--Kleene lattice. The smallest ortholattice which is not orthomodular is the {\em Benzene ring} ${\bf B}_6$, with the Kleene complement defined as in the following Hasse diagram, which makes it non--isomorphic with the horizontal sum of BI--chains ${\bf D}_4\boxplus {\bf D}_4$, while its lattice reduct is isomorphic to the horizontal sum of bounded chains ${\bf D}_4\boxplus {\bf D}_4$. An example of a non--modular orthomodular lattice is ${\bf D}_2^2\boxplus {\bf D}_2^3$. The smallest modular ortholattice which is not a Boolean algebra is ${\bf MO}_2={\bf D}_2^2\boxplus {\bf D}_2^2$. Let us also recall, for an arbitrary nonzero cardinality $\kappa $, the notation ${\bf MO}_{\kappa }=\boxplus _{0\leq \iota <\kappa }{\bf D}_2^2$ for the modular ortholattice of length three with $2\kappa $ atoms.\vspace*{3pt}

\begin{center}\begin{tabular}{ccccc}
\hspace*{14pt}\begin{picture}(30,53)(0,0)
\put(-10,50){${\bf N}_5$:}
\put(15,0){\circle*{3}}
\put(30,15){\circle*{3}}
\put(15,0){\line(1,1){15}}
\put(15,0){\line(-1,1){25}}
\put(30,15){\line(0,1){20}}
\put(15,50){\circle*{3}}
\put(30,35){\circle*{3}}
\put(-10,25){\circle*{3}}
\put(15,50){\line(1,-1){15}}
\put(15,50){\line(-1,-1){25}}
\put(13,-9){$0$}
\put(13,53){$1$}
\put(32,12){$b$}
\put(32,32){$b^{\prime }$}
\put(-37,23){$a=a^{\prime }$}
\end{picture} & \hspace*{2pt}
\begin{picture}(30,53)(0,0)
\put(10,50){${\bf M}_3$:}
\put(15,7){\circle*{3}}
\put(15,37){\circle*{3}}
\put(0,22){\circle*{3}}
\put(30,22){\circle*{3}}
\put(15,7){\line(1,1){15}}
\put(15,7){\line(-1,1){15}}
\put(15,37){\line(1,-1){15}}
\put(15,37){\line(-1,-1){15}}
\put(15,7){\line(0,1){30}}
\put(15,22){\circle*{3}}
\put(13,-2){$0$}
\put(13,40){$1$}
\put(-7,20){$a$}
\put(17,19){$a^{\prime }$}
\put(32,20){$b=b^{\prime }$}
\end{picture} &\hspace*{18pt}
\begin{picture}(30,53)(0,0)
\put(-10,50){${\bf B}_6$:}
\put(15,0){\circle*{3}}
\put(0,15){\circle*{3}}
\put(30,15){\circle*{3}}
\put(15,0){\line(1,1){15}}
\put(15,0){\line(-1,1){15}}
\put(0,15){\line(0,1){20}}
\put(30,15){\line(0,1){20}}
\put(15,50){\circle*{3}}
\put(0,35){\circle*{3}}
\put(30,35){\circle*{3}}
\put(15,50){\line(1,-1){15}}
\put(15,50){\line(-1,-1){15}}
\put(13,-9){$0$}
\put(13,53){$1$}
\put(-6,12){$a$}
\put(32,12){$b$}
\put(-7,34){$b^{\prime }$}
\put(33,33){$a^{\prime }$}
\end{picture} & \hspace*{35pt}
\begin{picture}(30,53)(0,0)
\put(-40,50){${\bf D}_2^2\boxplus {\bf D}_2^3$:}
\put(15,0){\circle*{3}}
\put(15,30){\circle*{3}}
\put(0,15){\circle*{3}}
\put(30,15){\circle*{3}}
\put(-35,25){\circle*{3}}
\put(65,25){\circle*{3}}
\put(-43,23){$u$}
\put(68,23){$u^{\prime }$}
\put(15,0){\line(1,1){15}}
\put(15,0){\line(-1,1){15}}
\put(15,30){\line(1,-1){15}}
\put(15,30){\line(-1,-1){15}}
\put(15,0){\line(2,1){50}}
\put(15,0){\line(-2,1){50}}
\put(15,50){\line(2,-1){50}}
\put(15,50){\line(-2,-1){50}}
\put(15,0){\line(0,1){20}}
\put(0,15){\line(0,1){20}}
\put(30,15){\line(0,1){20}}
\put(15,30){\line(0,1){20}}
\put(15,20){\circle*{3}}
\put(15,50){\circle*{3}}
\put(0,35){\circle*{3}}
\put(30,35){\circle*{3}}
\put(15,20){\line(1,1){15}}
\put(15,20){\line(-1,1){15}}
\put(15,50){\line(1,-1){15}}
\put(15,50){\line(-1,-1){15}}
\put(13,-9){$0$}
\put(13,53){$1$}
\put(-6,12){$a$}
\put(17,13){$b$}
\put(32,12){$c$}
\put(17,31){$b^{\prime }$}
\put(-7,32){$c^{\prime }$}
\put(32,30){$a^{\prime }$}
\end{picture} &\hspace*{40pt}
\begin{picture}(30,53)(0,0)
\put(5,50){${\bf MO}_2$:}
\put(15,0){\circle*{3}}
\put(15,30){\circle*{3}}
\put(0,15){\circle*{3}}
\put(30,15){\circle*{3}}
\put(-15,15){\circle*{3}}
\put(45,15){\circle*{3}}
\put(15,0){\line(1,1){15}}
\put(15,0){\line(-1,1){15}}
\put(15,30){\line(1,-1){15}}
\put(15,30){\line(-1,-1){15}}
\put(15,0){\line(2,1){30}}
\put(15,0){\line(-2,1){30}}
\put(15,30){\line(2,-1){30}}
\put(15,30){\line(-2,-1){30}}
\put(13,-9){$0$}
\put(13,33){$1$}
\put(-7,13){$a$}
\put(31,11){$a^{\prime }$}
\put(-22,12){$b$}
\put(47,12){$b^{\prime }$}
\end{picture}\end{tabular}\end{center}\vspace*{3pt}

A \emph{Brouwer--Zadeh lattice} (in brief, \emph{BZ--lattice})
is an algebra $\mathbf{L}=(L,\wedge,\vee,\cdot ^{\prime},\cdot ^{\sim },0,1)$ of type $(2,2,1,1,0,0)$ such that $(L,\wedge,\vee,\cdot ^{\prime},0,1)$ is a pseudo--Kleene algebra
and $\cdot ^{\sim }:L\rightarrow L$ is an order--reversing operation, called {\em Brouwer complement}, that satisfies: $a\wedge a^{\sim}=0$ and $a\leq a^{\sim\sim}=a^{\sim\prime}$ for all $a\in L$. Note that, in any BZ--lattice ${\bf L}$, we have, for all $a,b\in L$: $a^{\sim\sim\sim}=a^{\sim}\leq a^{\prime}$, $(a\vee b)^{\sim}=a^{\sim}\wedge b^{\sim}$ and $(a\wedge b)^{\sim}\geq a^{\sim}\vee b^{\sim}$.

We denote in the following way the modularity and the distributivity laws for lattices and also consider the following equations over the type of BZ--lattices, out of which \sdm\ clearly implies $(\ast )$ and \wsdm, while \jo\ implies \ji:\begin{flushleft}\begin{tabular}{rl}
\modular & $x\vee (y\wedge (x\vee z))\approx (x\vee y)\wedge (x\vee z)$\\ 
\dist & $x\vee (y\wedge z)\approx (x\vee y)\wedge (x\vee z)$\\ 
$(\ast )$ & $(x\wedge x^{\prime })^{\sim }\approx x^{\sim }\vee x^{\prime \sim }$\\ 
\sdm\ {\em (Strong De Morgan)} & $(x\wedge y)^{\sim }\approx x^{\sim }\vee y^{\sim }$\\ 
\wsdm\ {\em (weak \sdm )} & $(x\wedge y^{\sim })^{\sim }\approx x^{\sim }\vee y^{\sim \sim }$\\ 
\sk & $x\wedge y^{\sim \sim }\leq x^{\prime \sim }\vee y$\\  
\jo & $(x\wedge y^{\sim })\vee (x\wedge y^{\sim \sim })\approx x$\\ 
\ji & $(x\wedge (x\wedge y)^{\sim })\vee (x\wedge(x\wedge y)^{\sim \sim })\approx x$\end{tabular}\end{flushleft}

A \emph{\PBZ --lattice} is a paraorthomodular BZ--lattice that satisfies condition $(\ast )$. \PBZ --lattices form a variety. In any \PBZ --lattice $\mathbf{L}$, $S(\mathbf{L})=\{a\in L:a^{\prime }=a^{\sim }\}=\{a^{\sim }:a\in L\}$ and $S(\mathbf{L})$ is the universe of the largest orthomodular subalgebra of $\mathbf{L}$, so that $\mathbf{L}$ is orthomodular iff $S(\mathbf{L})=L$ iff $\mathbf{L}\vDash x^{\prime }\approx x^{\sim }$.

We denote by $\Lat $, $\BA $, $\MOL $, $\OML $, $\OL $, $\KA $, $\KL $ and $\BI $ the varieties of lattices, Boolean algebras, modular ortholattices, orthomodular lattices, ortholattices, Kleene algebras, pseudo--Kleene algebras and BI--lattices, respectively, and by $\POML $ the quasivariety of the paraorthomodular pseudo--Kleene algebras. We denote by $\BZ $ and $\PBZs $ the varieties of BZ--lattices and \PBZ --lattices, respectively, and by $\DIST $, $\MOD $, $\SDM $ and $\WSDM $ the varieties of the \PBZ --lattices that satisfy $\dist $, $\modular $, $\sdm $ and $\wsdm $, respectively. 

By the above, $\OML $ can be identified with the subvariety $\{\mathbf{L}\in \PBZs :\mathbf{L}\vDash x^{\prime }\approx x^{\sim }\}$ of $\PBZs $, by endowing each orthomodular lattice, in particular every Boolean algebra, with a Brouwer complement equalling its Kleene complement. In the same way, we can identify $\OL $ with the subvariety $\{\mathbf{L}\in \BZ :\mathbf{L}\vDash x^{\prime }\approx x^{\sim }\}$ of $\BZ $. With this extended signature, $\BA \subset \MOL \subset \OML \subset \OL \subset \SDM \subset \WSDM $.

\begin{remark} By Remark \ref{sired}, for any $\C \subseteq \BZ $, ${\rm Si}(\C _{BI})\subseteq {\rm Si}(\C )_{BI}$.\label{sibired}\end{remark}

If ${\bf A}$ and ${\bf B}$ are nontrivial BZ--lattices, then, exactly when at least one of ${\bf A}$ and ${\bf B}$ is an ortholattice, the horizontal sum ${\bf A}_{bi}\boxplus {\bf B}_{bi}$ becomes a BZ--lattice ${\bf A}\boxplus {\bf B}$ when endowed with the Brouwer complement $\cdot ^{\sim {\bf A}\boxplus {\bf B}}$ defined by: $\cdot ^{\sim {\bf A}\boxplus {\bf B}}\mid _A=\cdot ^{\sim {\bf A}}$ and $\cdot ^{\sim {\bf A}\boxplus {\bf B}}\mid _B=\cdot ^{\sim {\bf B}}$, which makes the BZ--lattices ${\bf A}$ and ${\bf B}$ subalgebras of ${\bf A}\boxplus {\bf B}$. From this, by enforcing paraorthomodularity and condition $(\ast )$, we obtain that ${\bf A}\boxplus {\bf B}$ is a \PBZ --lattice exactly when ${\bf A}$ and ${\bf B}$ are \PBZ --lattices and at least one of them is orthomodular.

An \emph{antiortholattice} is a \PBZ --lattice $\mathbf{L}$ with $S(\mathbf{L})=\{0,1\}$, or, equivalently, a \PBZ --lattice $\mathbf{L}$ whose Brouwer complement is {\em trivial}, that is $a^{\sim }=0$ for all $a\in L\setminus \{0\}$ (and, of course, $0^{\sim }=1$, as in every BZ--lattice). Note that any pseudo--Kleene algebra $\mathbf{L}$ with $S(\mathbf{L})=\{0,1\}$ (which implies paraorthomodularity), in particular any pseudo--Kleene algebra with the $0$ meet--irreducible, in particular any Kleene chain, becomes an antiortholattice when endowed with the trivial Brouwer complement. Moreover, clearly, in any BZ--lattice $\mathbf{L}$ with the $0$ meet--irreducible (which implies $(\ast )$), the Brouwer complement is trivial, so $\mathbf{L}$ is an antiortholattice. Furthermore, if ${\bf M}$ is a nontrivial bounded lattice and ${\bf K}$ is a pseudo--Kleene algebra, then the BI--lattice ${\bf M}\oplus {\bf K}\oplus {\bf M}^d$, endowed with the trivial Brouwer complement, becomes an antiortholattice, that we denote by ${\bf M}\oplus {\bf K}\oplus {\bf M}^d$, as well; the antiortholattice ${\bf M}\oplus {\bf D}_1\oplus {\bf M}^d$ will be denoted ${\bf M}\oplus {\bf M}^d$, as its bounded lattice reduct. We denote by $\AOL $ the proper universal class of antiortholattices (see Section \ref{aolmoddist} below). We have $V_{\BZ }(\AOL )\subset \WSDM $ and we denote by $\SAOL =\SDM \cap V_{\BZ }(\AOL )$.

\begin{remark}{\rm \cite{rgcmfp}} A relative axiomatization w.r.t. $\PBZs $ for $V_{\BZ }(\AOL )$, respectively $\OML \vee V_{\BZ }(\AOL )$, is given by $\jo $, respectively $\{\ji ,\wsdm \}$, hence one for $\SAOL $, respectively $\OML \vee \SAOL $, is given by $\{\jo ,\sdm \}$, respectively $\{\ji ,\sdm \}$.\label{j0vaol}\end{remark}

Let ${\bf M}$ be a bounded lattice and $\C ,\D $ be classes of bounded, BI or BZ--lattices. Then we denote:\linebreak $\begin{cases}
{\bf M}\oplus \C \oplus {\bf M}^d=\{{\bf M}\oplus {\bf K}\oplus {\bf M}^d:{\bf K}\in \C \};\\ 
\C \boxplus \D =\T \cup \{{\bf A}\boxplus {\bf B}:{\bf A}\in \C \setminus \T ,{\bf B}\in \D \setminus \T \}.\end{cases}$By the above, if $\C \subseteq \BI $, then ${\bf M}\oplus \C \oplus {\bf M}^d\subseteq \BI $, and, if ${\bf M}$ is non--trivial and $\C \subseteq \KL $, then ${\bf M}\oplus \C \oplus {\bf M}^d\subset \AOL $. If $\C ,\D \subseteq \BI $, then $\C \boxplus \D \subseteq \BI $; if $\C \subseteq \OL $ and $\D \subseteq \KL $, then $\C \boxplus \D \subseteq \KL $; if $\C \subseteq \OL $ and $\D \subseteq \BZ $, then $\C \boxplus \D \subseteq \BZ $; if $\C \subseteq \OML $ and $\D \subseteq \PBZs $, then $\C \boxplus \D \subseteq \PBZs $. By \cite{pbzsums}, $V_{\BZ }(\OML \boxplus \AOL )$ and $V_{\BZ }(\OML \boxplus V_{\BZ }(\AOL ))$ are distinct proper subvarieties of $\PBZs $ that satisfy $\ji $, fail $\jo$ and $\wsdm $ and intersect $\WSDM $ at $\OML \vee V_{\BZ }(\AOL )$.

\section{The Lattice Structures of Antiortholattices and the Varieties of Modular and of Distributive \PBZ --lattices}
\label{aolmoddist}

Recall from \cite{PBZ2} that antiortholattices are directly irreducible, which immediately follows from the fact that they have no nontrivial sharp elements, and from \cite{rgcmfp} that, moreover, the class of the directly irreducible members of $V_{\BZ }(\AOL )$ is $\AOL $. Now let us see that even the lattice reducts of antiortholattices are directly irreducible, and investigate some related properties.

\begin{remark} $\bullet \ $ Any BI--lattice with nontrivial sharp elements, endowed with the trivial Brouwer complement, fails condition $(\ast )$. Indeed, let $\mathbf{L}$ be a BI--lattice having an $a\in S(\mathbf{L})\setminus \{0,1\}$, so that $a^{\prime }\neq 0$, as well. Then, if $\cdot ^{\sim }:L\rightarrow L$ is the trivial Brouwer complement, we have: $a^{\sim }\vee a^{\prime \sim }=0\vee 0=0\neq 1=0^{\sim }=(a\wedge a^{\prime })^{\sim }$, so $\mathbf{L}$, endowed with $\cdot ^{\sim }$, fails condition $(\ast )$.

$\bullet \ $ Any direct product of two nontrivial BI-lattices has the nontrivial sharp elements $(0,1)$ and $(1,0)$ since $(0,1)^{\prime }=(0^{\prime },1^{\prime })=(1,0)$, hence it can not be the BI--lattice reduct of an antiortholattice, and, moreover, by the above, it fails condition $(\ast)$ when endowed with the trivial Brouwer complement.\label{nontrivsharpntrivsim}\end{remark}

Remark \ref{nontrivsharpntrivsim} ensures us that the BI--lattice reduct of any antiortholattice is directly irreducible. Note that a BI--lattice $\mathbf{L}$ can be directly irreducible while its lattice reduct $\mathbf{L}_l$ is directly reducible; indeed, the BI--lattice ${\bf D}_3\boxplus {\bf D}_3$, in which the incomparable elements equal their involutions, is directly irreducible, but its lattice reduct is isomorphic to ${\bf D}_2^2$. However, it turns out that a pseudo--Kleene algebra with a directly reducible lattice reduct always has nontrivial sharp elements, leading to the stronger statement in the next proposition.

\begin{lemma} If $\mathbf{A}$ and $\mathbf{B}$ are bounded lattices and ${\bf L}$ is a pseudo--Kleene algebra such that $\mathbf{L}_l=\mathbf{A}\times \mathbf{B}$, then $(0^{\bf A},1^{\bf B})^{\prime \mathbf{L}}=(1^{\bf A},0^{\bf B})$.\label{klprod}\end{lemma}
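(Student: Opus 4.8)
The plan is to exploit the defining inequality of a pseudo--Kleene algebra, namely $a\wedge a^{\prime}\leq b\vee b^{\prime}$ for all $a,b$, applied to carefully chosen elements of the product $\mathbf{A}\times\mathbf{B}$. Write $p=(0^{\bf A},1^{\bf B})$ and let $p^{\prime\mathbf{L}}=(u,v)$ for some $u\in A$, $v\in B$; since $\cdot^{\prime\mathbf{L}}$ is an involution on $\mathbf{L}$, it is in particular order--reversing and a dual lattice automorphism, but note we are \emph{not} assuming it acts coordinatewise — that is precisely what must be (partially) established. First I would observe that $p\wedge p^{\prime\mathbf{L}}=(0^{\bf A}\wedge u,\,1^{\bf B}\wedge v)=(0^{\bf A},v)$ and $p\vee p^{\prime\mathbf{L}}=(u,1^{\bf B})$.

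Next I would apply the Kleene inequality with $a=b=p$, giving $p\wedge p^{\prime\mathbf{L}}\leq p\vee p^{\prime\mathbf{L}}$, i.e. $(0^{\bf A},v)\leq(u,1^{\bf B})$, which is automatic and yields nothing. The useful move is to feed in the complementary element $q=(1^{\bf A},0^{\bf B})$: since $\cdot^{\prime\mathbf{L}}$ is a bijection, there is some element whose image is relevant, but more directly, I would use the inequality $a\wedge a^{\prime}\leq b\vee b^{\prime}$ in the form where $a$ ranges over $p$ and $b$ ranges over an \emph{arbitrary} element $(x,y)$ of $L$, and dually. Taking $b=(x,0^{\bf B})$ gives $(0^{\bf A},v)\leq(x\vee x^{\prime}_1,\,0^{\bf B}\vee (\text{second coord of }(x,0^{\bf B})^{\prime\mathbf{L}}))$ — this still requires knowing the complement coordinatewise. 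The cleaner approach: apply the inequality with $a=(0^{\bf A},1^{\bf B})$ and $b=(1^{\bf A},0^{\bf B})$ and \emph{also} with $a=(1^{\bf A},0^{\bf B})$, $b=(0^{\bf A},1^{\bf B})$, then combine with the order--reversing property: from $p\leq(1^{\bf A},1^{\bf B})$ and $(0^{\bf A},0^{\bf B})\leq p$ we get $(0^{\bf A},0^{\bf B})=1^{\prime\mathbf{L}}\leq p^{\prime\mathbf{L}}\leq 0^{\prime\mathbf{L}}=(1^{\bf A},1^{\bf B})$, again automatic. So the real content is squeezing $v=0^{\bf B}$ and $u=1^{\bf A}$ out of the Kleene condition by choosing $b$ so that $b\vee b^{\prime}$ has small second coordinate.

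The key step I expect to carry the argument: for any $y\in B$, consider $c=(1^{\bf A},y)\in L$ and compute $c\wedge c^{\prime\mathbf{L}}$. Since $c\geq p$ we have $c^{\prime\mathbf{L}}\leq p^{\prime\mathbf{L}}=(u,v)$, so the second coordinate of $c^{\prime\mathbf{L}}$ is $\leq v$; write $c^{\prime\mathbf{L}}=(s,t)$ with $t\leq v$. The Kleene inequality with $a=c$, $b=c$ gives $c\wedge c^{\prime\mathbf{L}}\leq c\vee c^{\prime\mathbf{L}}$, i.e. $(s,\,y\wedge t)\leq(1^{\bf A},\,y\vee t)$ — automatic. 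Instead use $a=c$ and $b=p$: $c\wedge c^{\prime\mathbf{L}}\leq p\vee p^{\prime\mathbf{L}}=(u,1^{\bf B})$, giving $s\leq u$; and $a=p$, $b=c$: $(0^{\bf A},v)=p\wedge p^{\prime\mathbf{L}}\leq c\vee c^{\prime\mathbf{L}}=(1^{\bf A},y\vee t)$, giving $v\leq y\vee t\leq y\vee v$, automatic. The decisive instance is $a=(1^{\bf A},0^{\bf B})$ with $b=(1^{\bf A},0^{\bf B})$: here $(1^{\bf A},0^{\bf B})\wedge(1^{\bf A},0^{\bf B})^{\prime\mathbf{L}}\leq(1^{\bf A},0^{\bf B})\vee(1^{\bf A},0^{\bf B})^{\prime\mathbf{L}}$, and since $p$ and $q=(1^{\bf A},0^{\bf B})$ satisfy $p\wedge q=(0^{\bf A},0^{\bf B})=0^{\mathbf{L}}$ and $p\vee q=(1^{\bf A},1^{\bf B})=1^{\mathbf{L}}$, i.e. $q$ is \emph{a} lattice complement of $p$, I would then invoke that $\cdot^{\prime\mathbf{L}}$ preserves this complementation data: $p^{\prime\mathbf{L}}\wedge q^{\prime\mathbf{L}}=0$, $p^{\prime\mathbf{L}}\vee q^{\prime\mathbf{L}}=1$. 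Combined with $p\leq q^{\prime\mathbf{L}}$ or its failure — more precisely, using the Kleene law $p\wedge p^{\prime\mathbf{L}}\leq q\vee q^{\prime\mathbf{L}}$ and $q\wedge q^{\prime\mathbf{L}}\leq p\vee p^{\prime\mathbf{L}}$ together — one pins down that $p\wedge p^{\prime\mathbf{L}}=0^{\mathbf{L}}$ and $p\vee p^{\prime\mathbf{L}}=1^{\mathbf{L}}$, which translates coordinatewise to $v=0^{\bf B}$ and $u=1^{\bf A}$, i.e. $p^{\prime\mathbf{L}}=(1^{\bf A},0^{\bf B})$. The main obstacle is making rigorous the claim that $p$ is sharp in $\mathbf{L}$ — equivalently that $(0^{\bf A},1^{\bf B})\wedge(0^{\bf A},1^{\bf B})^{\prime\mathbf{L}}=0$ — without circularity; I expect this follows because $p$ has a lattice complement $q$ with $q\wedge q^{\prime\mathbf{L}}$ forced small by the Kleene inequality applied symmetrically, so that the chain $p\wedge p^{\prime\mathbf{L}}\leq q\vee q^{\prime\mathbf{L}}$ and the dual bound leave no room for either coordinate of $p\wedge p^{\prime\mathbf{L}}$ to be nonzero.
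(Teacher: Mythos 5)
Your proposal is correct and, once the exploratory detours are stripped away, coincides with the paper's own proof: writing $p=(0^{\bf A},1^{\bf B})$ and $q=(1^{\bf A},0^{\bf B})$, the two Kleene inequalities $p\wedge p^{\prime}\leq q\vee q^{\prime}$ and $q\wedge q^{\prime}\leq p\vee p^{\prime}$ yield the coordinatewise comparisons between the components of $p^{\prime}$ and $q^{\prime}$, and the dual--automorphism identities $p^{\prime}\wedge q^{\prime}=(p\vee q)^{\prime}=0$ and $p^{\prime}\vee q^{\prime}=(p\wedge q)^{\prime}=1$ then force $p^{\prime}=(1^{\bf A},0^{\bf B})$. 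The step you flag as the ``main obstacle'' is not actually a gap: the combination you describe (e.g.\ $v\leq t$ together with $v\wedge t=0$ gives $v=0^{\bf B}$, and $s\leq u$ together with $u\vee s=1$ gives $u=1^{\bf A}$) closes the argument exactly as in the paper.
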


\begin{proof} For brevity, we drop the superscripts. Let $(0,1)^{\prime }=(a,b)\in L=A\times B$ and $(1,0)^{\prime }=(c,d)\in L=A\times B$. Since $\mathbf{L}\in \KL $, we have $(0,b)=(0,1)\wedge (a,b)\leq (1,0)\vee (c,d)=(1,d)$ and $(a,1)=(0,1)\vee (a,b)\geq (1,0)\wedge (c,d)=(c,0)$, so that $b\leq d$ in $\mathbf{B}$ and $a\geq c$ in $\mathbf{A}$. Hence $(a,d)=(a,b)\vee (c,d)=(0,1)^{\prime }\vee (1,0)^{\prime }=((0,1)\wedge (1,0))^{\prime }=(0,0)^{\prime }=(1,1)$ and $(c,b)=(a,b)\wedge (c,d)=(0,1)^{\prime }\wedge (1,0)^{\prime }=((0,1)\vee (1,0))^{\prime }=(1,1)^{\prime }=(0,0)$, thus $c=0$ and $a=1$ in $\mathbf{A}$, while $b=0$ and $d=1$ in $\mathbf{B}$. Therefore $(0,1)^{\prime }=(a,b)=(1,0)$.\end{proof}

\begin{proposition} The lattice reduct of any antiortholattice is directly irreducible.\label{aoldirirred}
\end{proposition}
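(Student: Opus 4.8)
The plan is to argue by contradiction, using the pseudo--Kleene algebra structure to show that a nontrivial direct decomposition of the lattice reduct would produce a sharp element distinct from $0$ and $1$. So suppose $\mathbf{L}$ is an antiortholattice and $\mathbf{L}_l=\mathbf{A}\times \mathbf{B}$ for some bounded lattices $\mathbf{A}$ and $\mathbf{B}$; I want to conclude that $\mathbf{A}$ or $\mathbf{B}$ is trivial. Consider the element $e=(0^{\mathbf{A}},1^{\mathbf{B}})\in L$. If both $\mathbf{A}$ and $\mathbf{B}$ are nontrivial, then $e\neq (0^{\mathbf{A}},0^{\mathbf{B}})=0^{\mathbf{L}}$ and $e\neq (1^{\mathbf{A}},1^{\mathbf{B}})=1^{\mathbf{L}}$.

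Since $\mathbf{L}$ is in particular a pseudo--Kleene algebra, Lemma \ref{klprod} applies to the decomposition $\mathbf{L}_l=\mathbf{A}\times \mathbf{B}$ and gives $e^{\prime \mathbf{L}}=(0^{\mathbf{A}},1^{\mathbf{B}})^{\prime \mathbf{L}}=(1^{\mathbf{A}},0^{\mathbf{B}})$. Hence $e\vee e^{\prime \mathbf{L}}=(0^{\mathbf{A}},1^{\mathbf{B}})\vee (1^{\mathbf{A}},0^{\mathbf{B}})=(1^{\mathbf{A}},1^{\mathbf{B}})=1^{\mathbf{L}}$, so $e\in S(\mathbf{L})$. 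But $\mathbf{L}$ is an antiortholattice, so $S(\mathbf{L})=\{0^{\mathbf{L}},1^{\mathbf{L}}\}$, contradicting $e\notin \{0^{\mathbf{L}},1^{\mathbf{L}}\}$. Therefore at least one of $\mathbf{A}$, $\mathbf{B}$ is trivial, i.e. $\mathbf{L}_l$ is directly irreducible.

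I expect no real obstacle once Lemma \ref{klprod} is available; the only conceptual point, already flagged by the preceding remark, is that direct irreducibility of $\mathbf{L}_{bi}$ does \emph{not} transfer automatically to $\mathbf{L}_l$ (witness ${\bf D}_3\boxplus {\bf D}_3$, whose lattice reduct is ${\bf D}_2^2$), so one genuinely must invoke the Kleene inequality to pin down the involution on the hypothetical coordinate idempotents $(0^{\mathbf{A}},1^{\mathbf{B}})$ and $(1^{\mathbf{A}},0^{\mathbf{B}})$ and thereby convert a lattice decomposition into a forbidden nontrivial sharp element. Equivalently, one could run the same computation through the failure of condition $(\ast )$ recorded in the remark (namely $((0^{\mathbf{A}},1^{\mathbf{B}})\wedge (0^{\mathbf{A}},1^{\mathbf{B}})^{\prime \mathbf{L}})^{\sim }=0^{\mathbf{L}\sim }=1^{\mathbf{L}}$ while $(0^{\mathbf{A}},1^{\mathbf{B}})^{\sim }\vee (0^{\mathbf{A}},1^{\mathbf{B}})^{\prime \mathbf{L}\sim }=0^{\mathbf{L}}$), but the sharpness phrasing above is the most economical.
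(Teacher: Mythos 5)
Your proof is correct and is essentially identical to the paper's own argument: both assume a nontrivial decomposition $\mathbf{L}_l=\mathbf{A}\times\mathbf{B}$, apply Lemma \ref{klprod} to get $(0^{\mathbf{A}},1^{\mathbf{B}})^{\prime}=(1^{\mathbf{A}},0^{\mathbf{B}})$, and conclude that $(0^{\mathbf{A}},1^{\mathbf{B}})$ is a nontrivial sharp element, contradicting $S(\mathbf{L})=\{0,1\}$. Nothing further is needed.
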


\begin{proof}  Let $\mathbf{L}\in \AOL $ and assume by absurdum that $\mathbf{L}_{l}=\mathbf{A}\times\mathbf{B}$ for some nontrivial bounded lattices $\mathbf{A}$ and $\mathbf{B}$. Then, by Lemma \ref{klprod}, we have $(0,1)^{\prime }=(1,0)$ in $\mathbf{L}$, hence $(0,1)\in S(\mathbf{L})$, which contradicts the fact that $\mathbf{L}$ is an antiortholattice, since $(0,1)\notin \{(0,0),(1,1)\}$.\end{proof} 

We have used above the fact that, since an antiortholattice has no nontrivial sharp elements, the only elements of an antiortholattice whose Kleene complements are bounded lattice complements are $0$ and $1$. In distributive antiortholattices, moreover, we have no nontrivial complemented elements:

\begin{proposition} The only complemented elements of the lattice reduct of a distributive antiortholattice are $0$ and $1$.\end{proposition}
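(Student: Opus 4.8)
The plan is to use the distributivity of the lattice reduct together with the fact (just established) that an antiortholattice has no nontrivial sharp elements, i.e. that the only elements $a$ with $a\vee a^{\prime}=1$ are $0$ and $1$. Suppose $a\in L$ has a bounded lattice complement $b$, so that $a\wedge b=0$ and $a\vee b=1$. The key step will be to show that, in a distributive antiortholattice, the Kleene complement $a^{\prime}$ coincides with $b$, so that $a\vee a^{\prime}=1$ and $a$ is sharp, whence $a\in\{0,1\}$ by the remark preceding the statement.

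To prove $a^{\prime}=b$, I would argue as follows. From $a\wedge b=0$ we get, applying the involution and the one De Morgan law available in any pseudo--Kleene algebra, $a^{\prime}\vee b^{\prime}=(a\wedge b)^{\prime}=0^{\prime}=1$; similarly from $a\vee b=1$ we get $a^{\prime}\wedge b^{\prime}=0$. So $b^{\prime}$ is also a complement of $a$. Now use distributivity: in a distributive lattice, complements are unique, so $b=b^{\prime}$, i.e. $b$ is a fixpoint of the involution. But then $b\vee b^{\prime}=b\vee b=b$; also $b\wedge b^{\prime}=b$. Applying the pseudo--Kleene inequality $x\wedge x^{\prime}\leq y\vee y^{\prime}$ with $x=b$ and $y=a$ gives $b=b\wedge b^{\prime}\leq a\vee a^{\prime}$, and with $x=a$, $y=b$ gives $a\wedge a^{\prime}\leq b\vee b^{\prime}=b$. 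Combined with $a\wedge b=0$, the second of these yields $a\wedge a^{\prime}\leq b$ and $a\wedge a^{\prime}\leq a$, so $a\wedge a^{\prime}\leq a\wedge b=0$; hence $a$ is sharp in the sense that $a\wedge a^{\prime}=0$. Dually, from $b\leq a\vee a^{\prime}$ and $a\vee b=1$ one should be able to push $a\vee a^{\prime}$ up to $1$: indeed $a\vee a^{\prime}\geq a\vee b=1$ once we know $a^{\prime}\geq b$, which I would obtain from $b=b^{\prime}$, $a\wedge b=0$, distributivity, and the uniqueness of complements applied to $a^{\prime}$ — more directly, since $b=b^{\prime}$ is a complement of $a$ and $a^{\prime}$ satisfies $a\wedge a^{\prime}=0$ (just shown) and we want $a\vee a^{\prime}=1$, it is cleanest to note that in the distributive lattice $a$ has at most one complement, and $b$ is one; so it suffices to show $a^{\prime}$ is also a complement of $a$, i.e. $a\vee a^{\prime}=1$. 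This last equality is exactly what we're after, so a little care is needed to avoid circularity.

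The cleaner route, which I would actually write up: from $a\wedge b=0$ and $a\vee b=1$ with $b=b^{\prime}$ (established above by uniqueness of complements in the distributive lattice), consider $a\wedge a^{\prime}$ and $a\vee a^{\prime}$. We have $a\wedge a^{\prime}\leq b\vee b^{\prime}=b$ (pseudo--Kleene) and $a\wedge a^{\prime}\leq a$, so $a\wedge a^{\prime}\leq a\wedge b=0$. Dually, $a\vee a^{\prime}\geq a\wedge$ — no; instead apply the pseudo--Kleene inequality in the dual form or use involution: from $a\wedge a^{\prime}=0$, apply $\cdot^{\prime}$ to get $a^{\prime}\vee a=a^{\prime}\vee a^{\prime\prime}=(a\wedge a^{\prime})^{\prime}=0^{\prime}=1$. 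Hence $a\vee a^{\prime}=1$ as well, so $a$ is a (nontrivial) sharp element unless $a\in\{0,1\}$ — contradiction with the preceding remark. Therefore $a\in\{0,1\}$, as claimed.

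The main obstacle is the small circularity hazard in pinning down $a^{\prime}$: one must not assume $a\vee a^{\prime}=1$ before deriving it. The way around it is the two--step argument above — first use uniqueness of complements in the distributive lattice (applied to $b$ versus $b^{\prime}$, both of which are manifestly complements of $a$ by the single available De Morgan law) to get $b=b^{\prime}$, then use the pseudo--Kleene inequality $x\wedge x^{\prime}\leq y\vee y^{\prime}$ with the pair $(a,b)$ to squeeze $a\wedge a^{\prime}$ down to $0$, and finally apply the involution to get $a\vee a^{\prime}=1$ for free. Distributivity is used only once, for uniqueness of complements, which matches the remark in the text that the non--distributive case genuinely fails (e.g. in a horizontal sum one can have many complemented but non--sharp elements).
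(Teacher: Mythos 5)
There is a genuine gap at the step where you conclude $b=b^{\prime}$. From $a\wedge b=0$ and $a\vee b=1$ the De Morgan computation gives $a^{\prime}\vee b^{\prime}=1$ and $a^{\prime}\wedge b^{\prime}=0$, which says that $b^{\prime}$ is a complement of $a^{\prime}$, not of $a$. So uniqueness of complements in a distributive lattice compares $b^{\prime}$ with the complement of $a^{\prime}$, and gives you nothing about $b$ versus $b^{\prime}$. Indeed the claim $b=b^{\prime}$ cannot be right as an intermediate fact: for the complemented pair $a=0$, $b=1$ one has $b^{\prime}=0\neq b$, and more generally ``$b$ complements $a$ and $b^{\prime}$ complements $a^{\prime}$'' never forces $b=b^{\prime}$ (think of a four--element Boolean algebra, where the inference pattern visibly fails). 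Since every subsequent inequality in your argument ($b\wedge b^{\prime}=b$, $a\wedge a^{\prime}\leq b\vee b^{\prime}=b$, etc.) leans on $b=b^{\prime}$, and since you use distributivity \emph{only} for this uniqueness-of-complements step, the proof does not go through as written.

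The repair is to spend distributivity elsewhere, and it is essentially the paper's proof. From the pseudo--Kleene inequality $a\wedge a^{\prime}\leq b\vee b^{\prime}$ you get, by distributivity, $a\wedge a^{\prime}=(a\wedge a^{\prime})\wedge(b\vee b^{\prime})=(a\wedge a^{\prime}\wedge b)\vee(a\wedge a^{\prime}\wedge b^{\prime})$, and both joinands vanish because $a\wedge b=0$ and $a^{\prime}\wedge b^{\prime}=0$. Hence $a\wedge a^{\prime}=0$, so applying the involution gives $a\vee a^{\prime}=1$ and $a\in S(\mathbf{L})=\{0,1\}$. (The paper runs the symmetric computation on $b\wedge b^{\prime}\leq a\vee a^{\prime}$ and concludes that $b$ is sharp; either version works.) Your closing observation --- that after $a\wedge a^{\prime}=0$ the involution yields $a\vee a^{\prime}=1$ ``for free'' --- is correct and is exactly the right way to finish; only the route to $a\wedge a^{\prime}=0$ needs to be replaced as above.
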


\begin{proof}  Let $\mathbf{L}$ be a distributive antiortholattice and assume by absurdum that, for some $a,b\in L\setminus \{0,1\}$, $a\vee b=1$ and $a\wedge b=0$, so that $a^{\prime }\wedge b^{\prime }=(a\vee b)^{\prime }=1^{\prime }=0$. Since $\mathbf{L}_{bi}\in \KL $, we have $b\wedge b^{\prime }\leq a\vee a^{\prime }$, hence $b\wedge b^{\prime }=(a\vee a^{\prime })\wedge b\wedge b^{\prime }=(a\wedge b\wedge b^{\prime })\vee (a^{\prime }\wedge b\wedge b^{\prime })=0\vee 0=0$, thus $b\in S(\mathbf{L})$, which contradicts the fact that $\mathbf{L}$ is an antiortholattice.\end{proof} 

\begin{example} Here is a modular non--distributive antiortholattice with other complemented elements beside $0$ and $1$, namely, in the following Hasse diagram, $a$ and $a^{\prime }$ are bounded lattice complements of both $b$ and $b^{\prime }$:

\begin{center}\begin{picture}(40,82)(0,0)
\put(20,0){\circle*{3}}
\put(20,80){\circle*{3}}
\put(20,40){\circle*{3}}
\put(0,20){\circle*{3}}
\put(-20,40){\circle*{3}}
\put(0,60){\circle*{3}}
\put(60,40){\circle*{3}}
\put(0,40){\circle*{3}}
\put(40,40){\circle*{3}}
\put(40,20){\circle*{3}}
\put(40,60){\circle*{3}}
\put(18,-9){$0$}
\put(18,83){$1$}
\put(18,32){$c$}
\put(10,25){$=c^{\prime }$}
\put(-27,38){$a$}
\put(-8,58){$u^{\prime }$}
\put(62,37){$b$}
\put(43,58){$v^{\prime}$}
\put(42,14){$v$}
\put(-7,14){$u$}
\put(3,38){$a^{\prime}$}
\put(42,37){$b^{\prime}$}
\put(20,0){\line(-1,1){40}}
\put(20,0){\line(1,1){40}}
\put(20,80){\line(-1,-1){40}}
\put(20,80){\line(1,-1){40}}
\put(0,20){\line(0,1){40}}
\put(40,20){\line(0,1){40}}
\put(0,20){\line(1,1){40}}
\put(40,20){\line(-1,1){40}}
\end{picture}\end{center}

As we have noticed in \cite{rgcmfp} and recalled in Section \ref{preliminaries}, any pseudo--Kleene algebra with no nontrivial sharp elements is paraorthomodular and satisfies condition $(\ast )$ when endowed with the trivial Brouwer complement, hence it becomes an antiortholattice, since, clearly, any pseudo--Kleene algebra, endowed with the trivial Brouwer complement, becomes a BZ--lattice. Thus the Hasse diagram above represents, indeed, the BI--lattice reduct of an antiortholattice, thus a member of $\AOL \cap \MOD $.\label{aolnontrivblatcompl}\end{example}

\begin{lemma} If $\mathbf{L}$, $\mathbf{A}$ and $\mathbf{B}$ are bounded lattices such that $\mathbf{L}=\mathbf{A}\boxplus\mathbf{B}$, $|A|\geq 2$, $|B|\geq 2$ and $|L|\geq 5$, then $\mathbf{L}$ is directly irreducible.\label{latdirirred}\end{lemma}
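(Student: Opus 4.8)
The plan is to show that a horizontal sum $\mathbf{L} = \mathbf{A} \boxplus \mathbf{B}$ of two nontrivial bounded lattices with at least five elements in total has no nontrivial pair of complementary factor congruences, equivalently no splitting of $\mathbf{L}$ as a product of two nontrivial lattices. First I would recall the structure: $L = A \cup B$ with $A \cap B = \{0,1\}$, every element of $A \setminus \{0,1\}$ incomparable to every element of $B \setminus \{0,1\}$, and comparabilities only within $A$ or within $B$. Suppose for contradiction that $\mathbf{L} \cong \mathbf{C} \times \mathbf{D}$ with $\mathbf{C}, \mathbf{D}$ nontrivial, and transport this to a pair $(\theta, \varphi)$ of complementary factor congruences on $\mathbf{L}$, i.e. $\theta \cap \varphi = \Delta_L$ and $\theta \vee \varphi = \nabla_L$ with $\theta, \varphi$ permuting. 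Writing $0 = (0^{\bf C},0^{\bf D})$, $1 = (1^{\bf C},1^{\bf D})$, there are the two "central" elements $e = (1^{\bf C},0^{\bf D})$ and $e' = (0^{\bf C},1^{\bf D})$, which are complementary ($e \wedge e' = 0$, $e \vee e' = 1$) and satisfy $0 \mathrel{\theta} e$, $e' \mathrel{\theta} 1$ while $0 \mathrel{\varphi} e'$, $e \mathrel{\varphi} 1$ (after possibly swapping $\theta, \varphi$); both $e$ and $e'$ are distinct from $0$ and $1$ since $\mathbf{C},\mathbf{D}$ are nontrivial.

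The key step is to locate $e$ within $L = A \cup B$ and derive a contradiction from the scarcity of comparable pairs. Without loss of generality $e \in A$. Since $e \ne 0,1$, we have $e \in A \setminus \{0,1\}$. The plan is to exploit that $\mathbf{L}$ is the direct product $(e]_{\mathbf L} \times [e)_{\mathbf L}$ as a lattice (since $e$ is a complemented neutral-type element coming from a direct decomposition, the interval decomposition $x \mapsto (x \wedge e, x \vee e)$ is an isomorphism onto $(e] \times [e)$). Now $(e]_{\mathbf L}$, as a subinterval of $\mathbf L$ with top $e \in A$, is contained in $A$: any $x \le e$ lies in $A$ because an element of $B \setminus \{0,1\}$ is incomparable to $e \in A \setminus \{0,1\}$, and $0 \in A$. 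Dually, $[e)_{\mathbf L}$ is the set of elements $\ge e$; an element of $B \setminus \{0,1\}$ is incomparable to $e$, so $[e)_{\mathbf L} \subseteq A \cup \{1\} \subseteq A$. Hence both factors $(e]$ and $[e)$ are subsets of $A$, so $L = A \cup B$ would have all of its elements built from pairs $(x,y)$ with $x \in (e] \subseteq A$, $y \in [e) \subseteq A$; in particular the bijection $L \to (e] \times [e)$ shows $|L| = |(e]| \cdot |[e)|$ with $(e], [e) \subseteq A$. I then need to see that this forces $B \subseteq A$, i.e. $B = \{0,1\}$: take any $b \in B \setminus \{0,1\}$; its image is $(b \wedge e, b \vee e)$, but $b$ is incomparable to $e$, so $b \wedge e < e$ and $b \vee e > e$, hence $b \wedge e \in (e] \setminus \{e\} \subseteq A \setminus \{1\}$ and $b \vee e \in [e) \setminus \{e\} \subseteq A \setminus \{0\}$; recovering $b = (b\wedge e)\vee(b\vee e)$... — here the cleanest route is instead to argue directly that $b \wedge e$ and $b \vee e$ lie in $A$ while $b \notin A$, yet $b$ is the join in $\mathbf L$ of two elements of $A$, which is again in $A$ (since $A$ is a sublattice closed under the operations of $\mathbf L$), a contradiction. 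This rules out $B \setminus \{0,1\} \ne \emptyset$, contradicting $|B| \ge 2$ combined with $|L| \ge 5$ (or more simply contradicting $|L| \ge 5$ directly, since then $L = A$ and the decomposition $\mathbf A \cong (e] \times [e)$ with $(e],[e)$ both nontrivial must be re-examined).

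The main obstacle I anticipate is the last contradiction: showing $\mathbf A$ itself cannot split with the central element $e$ inside it — because a priori $\mathbf{A}$ could be directly reducible as a lattice. The resolution is that the hypothesis $|L| \ge 5$ is doing real work only in conjunction with $|A|,|B| \ge 2$: if $B \setminus \{0,1\} \ne \emptyset$ the argument above gives an outright contradiction (an element of $B \setminus A$ expressed as a lattice term in elements of $A$), so we must have $B = \{0,1\}$, whence $\mathbf{L} = \mathbf{A} \boxplus \mathbf{D}_2 = \mathbf{A}$; but $\mathbf{D}_2$ is a horizontal-sum neutral element, so "$|B| \ge 2$ and $|L| \ge 5$" was meant with $\mathbf B \ne \mathbf D_2$ in mind, and symmetrically $\mathbf A \ne \mathbf D_2$. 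Thus in the genuinely two-sided case both $A \setminus \{0,1\}$ and $B \setminus \{0,1\}$ are nonempty, and then $e \in A$ forces (by the incomparability argument) every element of $B \setminus \{0,1\}$ to be incomparable to $e$, hence $B \setminus \{0,1\} \subseteq (e]^c \cap [e)^c$ relative to the product decomposition; pushing through, each such $b$ has $b \wedge e \in A$ and $b \vee e \in A$, so $b \in A$ — contradiction. I would write this up by first disposing of the degenerate case $\{|A| = 2$ or $|B| = 2\}$ using $|L| \ge 5$ (forcing the other factor to have $\ge 3$ elements, but the key point is only that both $A \setminus\{0,1\}$ and $B\setminus\{0,1\}$ can be assumed nonempty after absorbing $\mathbf D_2$), and then running the incomparability argument above. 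The whole proof is short once the interval-decomposition $\mathbf L \cong (e]\times[e)$ induced by the central element $e$ is in hand.
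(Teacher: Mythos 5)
There is a genuine gap at the decisive step. Once you place the central element $e=(1^{\mathbf{C}},0^{\mathbf{D}})$ in $A\setminus\{0,1\}$, the horizontal--sum structure forces $b\wedge e=0$ and $b\vee e=1$ for \emph{every} $b\in B\setminus\{0,1\}$: a common lower bound of $b$ and $e$ is simultaneously a lower bound of an element of $A\setminus\{0,1\}$ and of an element of $B\setminus\{0,1\}$, hence lies in $A\cap B=\{0,1\}$, and dually for upper bounds. So your observation that $b\wedge e$ and $b\vee e$ lie in $A$ is vacuously true (they are $0$ and $1$) and cannot yield $b\in A$; and the claim that ``$b$ is the join in $\mathbf{L}$ of two elements of $A$'' is never established. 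The natural candidate $b=(b\wedge e)\vee(b\wedge e')$ has both joinands in $A$ only if the complementary central element $e'=(0^{\mathbf{C}},1^{\mathbf{D}})$ also lies in $A$, and you never locate $e'$; when $e'\in B\setminus\{0,1\}$ (as happens in $\mathbf{D}_3\boxplus\mathbf{D}_3\cong\mathbf{D}_2\times\mathbf{D}_2$) the contradiction evaporates, and this is precisely the configuration that the hypothesis $|L|\geq 5$ must be used to exclude --- your sketch never invokes $|L|\geq 5$ at this point. The argument can be repaired from your own setup: since $(b\wedge e,b\vee e)=(0,1)=(e'\wedge e,e'\vee e)$ for every $b\in B\setminus\{0,1\}$, injectivity of $x\mapsto(x\wedge e,x\vee e)$ gives $B\setminus\{0,1\}\subseteq\{e'\}$; if this set is nonempty, then $B=\{0,e',1\}$, and since $(e']\cong\mathbf{D}$ and $[e')\cong\mathbf{C}$ are contained in $B$ by the same incomparability argument, $|C|=|D|=2$ and $|L|=4$, contradicting $|L|\geq 5$.

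The paper's proof reaches the contradiction much more directly, with no central elements or interval decompositions: it picks $u\in K\setminus\{0^{\mathbf{K}},1^{\mathbf{K}}\}$ in one factor (possible because $|L|>4$), observes that $(u,1^{\mathbf{M}})$ and $(u,0^{\mathbf{M}})$ are non--extremal and must lie on the same side of the horizontal sum (their join is $(u,1^{\mathbf{M}})\neq 1$), say both in $A\setminus\{0,1\}$, and then for $(v,w)\in B\setminus\{0,1\}$ computes $(u,1^{\mathbf{M}})\wedge(v,w)=0$ and $(u,0^{\mathbf{M}})\vee(v,w)=1$, forcing $w=0^{\mathbf{M}}=1^{\mathbf{M}}$. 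Note that both that proof and yours tacitly need $A\setminus\{0,1\}\neq\emptyset\neq B\setminus\{0,1\}$, which is how the lemma is applied later; your remark that $\mathbf{D}_2$ is neutral for $\boxplus$ and should be excluded is correct, but it does not substitute for closing the gap above.
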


\begin{proof}  Assume by absurdum that $\mathbf{L}=\mathbf{K}\times\mathbf{M}$ for some nontrivial bounded lattices $\mathbf{K}$ and $\mathbf{M}$. Since $|L|>4$, we have $|K|>2$ or $|M|>2$. Assume, for instance, that there exists a $u\in K\setminus\{0^{\mathbf{K}},1^{\mathbf{K}}\}$, so that
$(u,1^{\mathbf{M}})\notin \{0^{\mathbf{L}},(0^{\mathbf{K}},1^{\mathbf{M}}),1^{\mathbf{L}}\}$ and $(u,0^{\mathbf{M}})\notin \{0^{\mathbf{L}},(1^{\mathbf{K}},0^{\mathbf{M}}),1^{\mathbf{L}}\}$.

Assume, for instance, that $(u,1^{\mathbf{M}})\in A\setminus\{0^{\mathbf{L}},1^{\mathbf{L}}\}$, so that $(u,1^{\mathbf{M}})\vee b=1^{\mathbf{L}}$ for every $b\in B\setminus\{0^{\mathbf{L}},1^{\mathbf{L}}\}$. Since $(u,1^{\mathbf{M}})\vee(u,0^{\mathbf{M}})=(u,1^{\mathbf{M}})\neq 1^{\mathbf{L}}$, it follows that $(u,0^{\mathbf{M}})\notin B\setminus\{0^{\mathbf{L}},1^{\mathbf{L}}\}$, hence $(u,0^{\mathbf{M}})\in A\setminus\{0^{\mathbf{L}},1^{\mathbf{L}}\}$. Now let $(v,w)\in B\setminus\{0^{\mathbf{L}},1^{\mathbf{L}}\}$. Then $(u\wedge v,w)=(u,1^{\mathbf{M}})\wedge(v,w)=0^{\mathbf{L}}$ and $(u\vee v,w)=(u,0^{\mathbf{M}})\vee(v,w)=1^{\mathbf{L}}$, thus $0^{\mathbf{M}}=w=1^{\mathbf{M}}$, which contradicts the fact that $\mathbf{M}$ is nontrivial. Hence $\mathbf{L}$ is directly irreducible.\end{proof} 

\begin{proposition}\begin{itemize}
\item If $\mathbf{L}\in(\mathbb{OML}\boxplus\mathbb{AOL})\setminus\mathbb{OML}$, then $\mathbf{L}_{l}$ is directly irreducible, thus $\mathbf{L}$ is directly irreducible.

\item If $\mathbf{L}\in(\mathbb{OML}\boxplus V_{\BZ }(\mathbb{AOL}))\setminus(\mathbb{OML}\cup V_{\BZ }(\mathbb{AOL}))$, then $\mathbf{L}_{l}$ is directly irreducible, thus $\mathbf{L}$ is directly irreducible.\end{itemize}\label{gdirirred}\end{proposition}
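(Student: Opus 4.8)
The plan is to reduce both assertions to Lemma~\ref{latdirirred} and Proposition~\ref{aoldirirred} after a short case split on the two horizontal summands, and to note at the outset that it suffices to prove that $\mathbf{L}_l$ is directly irreducible: any decomposition $\mathbf{L}\cong\mathbf{C}\times\mathbf{D}$ as a direct product of \PBZ --lattices restricts to a decomposition $\mathbf{L}_l\cong\mathbf{C}_l\times\mathbf{D}_l$ of bounded lattices, so direct irreducibility of $\mathbf{L}_l$ forces one of $\mathbf{C},\mathbf{D}$ to be trivial. In both items $\mathbf{L}$ is nontrivial (the trivial algebras lie in $\OML $ and in $V_{\BZ }(\AOL )$), so by the definition of $\boxplus $ on classes we may write $\mathbf{L}=\mathbf{A}\boxplus\mathbf{B}$ with $\mathbf{A}$ a nontrivial orthomodular lattice and $\mathbf{B}$ a nontrivial antiortholattice (first item), respectively a nontrivial member of $V_{\BZ }(\AOL )$ (second item); since horizontal sums of BZ--lattices have horizontal sums of the corresponding reducts as reducts, $\mathbf{L}_l=\mathbf{A}_l\boxplus\mathbf{B}_l$.

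For the first item I would distinguish whether $\mathbf{A}\cong{\bf D}_2$. If so, then, as ${\bf D}_2$ is a neutral element for $\boxplus $, $\mathbf{L}_l\cong\mathbf{B}_l$, which is directly irreducible by Proposition~\ref{aoldirirred}. If $\mathbf{A}\not\cong{\bf D}_2$, then $\mathbf{A}$ is a nontrivial ortholattice possessing some $a\notin\{0,1\}$, so that $0,1,a,a^{\prime}$ are pairwise distinct (note $a=a^{\prime}$ would give $a=a\vee a^{\prime}=1$), whence $|A|\geq 4$; moreover $\mathbf{B}\not\cong{\bf D}_2$, for otherwise $\mathbf{L}=\mathbf{A}\boxplus\mathbf{B}\cong\mathbf{A}\in\OML $, against the hypothesis, hence $|B|\geq 3$; therefore $|L|=|A|+|B|-2\geq 5$, so Lemma~\ref{latdirirred} (applied with $|A|\geq 2$, $|B|\geq 2$, $|L|\geq 5$) shows $\mathbf{L}_l$ directly irreducible. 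For the second item both degenerate cases are ruled out immediately by the hypothesis: if $\mathbf{A}\cong{\bf D}_2$ then $\mathbf{L}\cong\mathbf{B}\in V_{\BZ }(\AOL )$, and if $\mathbf{B}\cong{\bf D}_2$ then $\mathbf{L}\cong\mathbf{A}\in\OML $, both impossible; hence, exactly as above, $|A|\geq 4$ and $|B|\geq 3$, so $|L|\geq 5$ and Lemma~\ref{latdirirred} applies again.

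The proof is essentially bookkeeping, all the substance being already contained in Lemma~\ref{latdirirred} and Proposition~\ref{aoldirirred}. The only step requiring a moment's care is the cardinality count ensuring $|L|\geq 5$ so that Lemma~\ref{latdirirred} is applicable --- that is, the observation that a nontrivial ortholattice different from ${\bf D}_2$ already has at least four elements, while the other summand, being a nontrivial bounded lattice not isomorphic to ${\bf D}_2$, contributes at least one further element. I do not anticipate any genuine obstacle beyond handling these cases cleanly.
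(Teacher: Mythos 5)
Your proposal is correct and follows essentially the same route as the paper: decompose $\mathbf{L}=\mathbf{A}\boxplus\mathbf{B}$, handle the case where both summands are outside $\{\mathbf{D}_1,\mathbf{D}_2\}$ via Lemma~\ref{latdirirred}, and fall back on Proposition~\ref{aoldirirred} when $\mathbf{L}$ is itself an antiortholattice. Your explicit cardinality count ($|A|\geq 4$ for a nontrivial orthomodular lattice other than $\mathbf{D}_2$, hence $|L|\geq 5$) just makes precise the hypothesis check that the paper leaves implicit when invoking Lemma~\ref{latdirirred}.
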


\begin{proof}  For any $\emptyset \neq \C \subseteq \PBZs $, if $\mathbf{L}\in(\mathbb{OML}\boxplus \C )\setminus(\mathbb{OML}\cup \C )$, then $\mathbf{L}=\mathbf{A}\boxplus \mathbf{B}$ for some $\mathbf{A}\in \OML \setminus \{\mathbf{D}_1,\mathbf{D}_2\}$ and some $\mathbf{B}\in \C \setminus \{\mathbf{D}_1,\mathbf{D}_2\}$, so that $\mathbf{L}_{l}$ is directly irreducible by Lemma \ref{latdirirred}, thus $\mathbf{L}$ is directly irreducible.\end{proof} 

Note from \cite{rgcmfp,pbzsums} that all members of $(\OML \vee V_{\BZ }(\AOL ))\setminus (\OML \cup \AOL )$ are directly reducible and all members of $V_{\BZ }(\OML \boxplus \AOL )\setminus (\OML \boxplus \AOL )$ are directly reducible. Hence:

\begin{corollary}\begin{itemize} 
\item ${\rm Di}(\OML \vee V_{\BZ }(\AOL ))={\rm Di}(\OML )\cup \AOL $.
\item ${\rm Di}(V_{\BZ }(\OML \boxplus \AOL ))={\rm Di}(\OML )\cup ((\OML \boxplus \AOL )\setminus \OML )$.\end{itemize}\end{corollary}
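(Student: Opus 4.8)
The plan is to derive both identities from the three ingredients assembled in the excerpt: the classification of the directly irreducible members of the two varieties in question (which is quoted from \cite{rgcmfp} and \cite{pbzsums} in the sentence immediately preceding the Corollary), Proposition \ref{gdirirred}, and the general fact that a variety is generated by its directly irreducible members whenever it has this property — or, more carefully, that the class of directly irreducible algebras of a variety $\V $ is always contained in the class one gets by closing under $\H \S \P $ any generating set, together with the fact that each of $\OML \vee V_{\BZ }(\AOL )$ and $V_{\BZ }(\OML \boxplus \AOL )$ is a variety and hence a congruence-distributive one (being term-equivalent over a lattice reduct), so Jónsson-type reasoning applies. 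In fact for this Corollary we do not even need Jónsson's lemma: we only need to identify the class $Di(\cdot )$ as a \emph{set}, and this is pure bookkeeping once the three inclusions below are in place.

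First I would handle the first bullet, $Di(\OML \vee V_{\BZ }(\AOL ))=Di(\OML )\cup \AOL $. For the inclusion $\supseteq $: every member of $Di(\OML )$ is a directly irreducible \PBZ --lattice and lies in $\OML \subseteq \OML \vee V_{\BZ }(\AOL )$, so it belongs to $Di(\OML \vee V_{\BZ }(\AOL ))$ (direct irreducibility is absolute, not relative to the ambient variety, for algebras with a one-element subalgebra — which all \PBZ --lattices have via $\{0,1\}$? no: the relevant point is simply that ``$\mathbf{L}\cong \mathbf{A}\times \mathbf{B}$ with $\mathbf{A},\mathbf{B}$ nontrivial'' makes no reference to the variety); and every antiortholattice is directly irreducible by the classical fact recalled from \cite{PBZ2} and lies in $V_{\BZ }(\AOL )$, hence in the join. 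For the inclusion $\subseteq $: take $\mathbf{L}\in Di(\OML \vee V_{\BZ }(\AOL ))$. By the sentence quoted just before the Corollary, every member of $(\OML \vee V_{\BZ }(\AOL ))\setminus (\OML \cup \AOL )$ is directly reducible; contrapositively, a directly irreducible $\mathbf{L}$ must lie in $\OML \cup \AOL $. If $\mathbf{L}\in \OML $ then $\mathbf{L}\in Di(\OML )$; if $\mathbf{L}\in \AOL $ we are done. That closes the first bullet.

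The second bullet, $Di(V_{\BZ }(\OML \boxplus \AOL ))=Di(\OML )\cup ((\OML \boxplus \AOL )\setminus \OML )$, runs along the same lines but uses Proposition \ref{gdirirred} for the $\supseteq $ direction on the horizontal-sum piece. For $\supseteq $: $Di(\OML )$ sits inside $\OML \subseteq V_{\BZ }(\OML \boxplus \AOL )$ as before; and for $\mathbf{L}\in (\OML \boxplus \AOL )\setminus \OML $, the first bullet of Proposition \ref{gdirirred} (taking $\C =\AOL $, noting $(\OML \boxplus \AOL )\setminus \OML \subseteq (\OML \boxplus \AOL )\setminus (\OML \cup \AOL )$ because a nontrivial horizontal sum $\mathbf{A}\boxplus \mathbf{B}$ with both factors nontrivial is never an antiortholattice — it has the nontrivial sharp elements coming from the $\OML $-summand) gives that $\mathbf{L}$ is directly irreducible, and it lies in $V_{\BZ }(\OML \boxplus \AOL )$. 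For $\subseteq $: let $\mathbf{L}\in Di(V_{\BZ }(\OML \boxplus \AOL ))$. By the quoted fact from \cite{rgcmfp},\cite{pbzsums}, every member of $V_{\BZ }(\OML \boxplus \AOL )\setminus (\OML \boxplus \AOL )$ is directly reducible, so $\mathbf{L}\in \OML \boxplus \AOL $; if $\mathbf{L}\in \OML $ then $\mathbf{L}\in Di(\OML )$, otherwise $\mathbf{L}\in (\OML \boxplus \AOL )\setminus \OML $, as required.

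The only genuine subtlety — and the step I would be most careful about — is the small set-theoretic identity $(\OML \boxplus \AOL )\setminus \OML =(\OML \boxplus \AOL )\setminus (\OML \cup \AOL )$ used to route the horizontal-sum members through the first bullet of Proposition \ref{gdirirred}; this needs the observation that $\AOL \cap (\OML \boxplus \AOL )\subseteq \{\mathbf{D}_1,\mathbf{D}_2\}\cup \{$members already in $\OML \}$, i.e.\ that a horizontal sum of two nontrivial \PBZ --lattices one of which is a nontrivial orthomodular lattice always has a sharp element other than $0,1$ and so is not an antiortholattice. That is immediate from the definition of $\boxplus $ for BZ--lattices given in the excerpt (the orthomodular summand contributes its own sharp elements, which remain sharp in the sum), but it is worth spelling out so that the appeal to Proposition \ref{gdirirred} is clean. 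Everything else is a routine unwinding of ``directly irreducible'' against the two inclusion facts already established in the paper, so I would keep the written proof to essentially the paragraph structure above.
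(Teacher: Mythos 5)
Your overall strategy is exactly the intended one: the corollary is meant to follow by combining the direct--reducibility facts quoted in the sentence immediately before it with the direct irreducibility of antiortholattices, of the members of $Di(\OML )$, and of the members of $(\OML \boxplus \AOL )\setminus \OML $ (Proposition \ref{gdirirred}). The first bullet and the $\subseteq $ directions of both bullets are fine. However, the step you yourself single out as ``the only genuine subtlety'' is false. The identity $(\OML \boxplus \AOL )\setminus \OML =(\OML \boxplus \AOL )\setminus (\OML \cup \AOL )$ does not hold: by the definition $\C \boxplus \D =\T \cup \{{\bf A}\boxplus {\bf B}:{\bf A}\in \C \setminus \T ,\,{\bf B}\in \D \setminus \T \}$ and the fact that ${\bf D}_2$ is a neutral element for $\boxplus $, we have ${\bf D}_2\boxplus {\bf B}={\bf B}$ for every nontrivial antiortholattice ${\bf B}$, so $\AOL \setminus \T \subseteq \OML \boxplus \AOL $; for instance ${\bf D}_3$ lies in $(\OML \boxplus \AOL )\setminus \OML $ but not in $(\OML \boxplus \AOL )\setminus (\OML \cup \AOL )$. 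Your supporting claim that a horizontal sum with a nontrivial orthomodular summand always acquires a sharp element other than $0$ and $1$ fails precisely when that summand is ${\bf D}_2$, which contributes no elements besides the bounds.

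Fortunately the detour is unnecessary: the first bullet of Proposition \ref{gdirirred} is stated for all of $(\OML \boxplus \AOL )\setminus \OML $, and its proof covers the members of that class which happen to be antiortholattices by appealing to Proposition \ref{aoldirirred} (lattice reducts of antiortholattices are directly irreducible) rather than by excluding them. So you may simply cite that bullet as stated and delete the false set identity together with its justification; with that repair your argument coincides with the paper's, which gives no detail beyond the word ``Hence''.
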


Recall that $\BA =\OML \cap V_{\BZ }(\AOL )$ is the unique atom of the lattice of subvarieties of $\PBZs $ \cite[Theorem 5.4.(2)]{GLP}, so, of course, the trivial variety $\T $ is the only subvariety of $\PBZs $ that has an upper bound for the lengths of the orthomodular subalgebras ${\bf S}({\bf L})$ of its members ${\bf L}$. Now let ${\bf L}$ be an arbitrary \PBZ --lattice and $\V $ be a variety of \PBZ --lattices. In \cite{pbzsums} we have denoted by $D({\bf L})=\{x\in L:x^{\sim }=0\}$ and called the members of $D({\bf L})$ {\em dense elements} of $L$. Note that $1\in D({\bf L})$ and $D({\bf L})$ is the universe of a join subsemilattice of ${\bf L}_l$. Clearly, ${\bf L}$ is an antiortholattice iff $L=D({\bf L})\cup \{0\}$, while, according to \cite[Lemma 10.(i)]{pbzsums}, ${\bf L}$ is orthomodular iff $D({\bf L})=\{1\}$. Consequently, if $\V \subseteq \OML $, then the subposet of dense elements of every member of $\V $ is a singleton, thus its length is $1$, while, if $\V \nsubseteq \OML $, then there is no upper bound for the lengths of the subposets of dense elements of the members of $\V $, since, according to \cite[Theorem 5.5]{GLP}, if $\V \nsubseteq \OML $, then the three--element antiortholattice chain ${\bf D}_3\in \V $, thus, for any non--empty set $I$, ${\bf D}_3^I\in \V $, which has the subposet $D({\bf D}_3^I)$ isomorphic to the Boolean lattice ${\bf D}_2^I\cong \mathcal{P}(I)$, which, of course, has the subchain $(\{(i]_{(I,\leq ^I)}\ |\ i\in I\},\subseteq )$ of length $|I|+1$, where $\leq ^I$ is a good order on the set $I$.

Now let us take a quick look at the varieties $\MOD $ and $\DIST $ of the modular and the distributive \PBZ --lattices, respectively. $\DIST\subsetneq V_{\BZ }(\AOL )$ by Remark \ref{j0vaol}, the immediate fact that any distributive \PBZ --lattice satisfies $\jo $ and the fact that, for instance, the antiortholattice ${\bf D}_2\oplus {\bf M}_3\oplus {\bf D}_2$ is non--distributive. The location of $\MOD $ in the lattice of subvarieties of $\PBZs $ is a bit more difficult to pinpoint, but we may notice that $\MOD $ is incomparable to $\OML $, $\SAOL $, $\WSDM $ and $V_{\BZ }(\OML \boxplus V_{\BZ }(\AOL ))$, thus also to $\SDM $, $V_{\BZ }(\AOL )$, $\OML \vee V_{\BZ }(\AOL )$, $\SDM \vee V_{\BZ }(\AOL )$ and $V_{\BZ }(\OML \boxplus \AOL )$. Indeed, for instance ${\bf D}_2^2\boxplus {\bf D}_2^3\in \OML \setminus \MOD $ and the antiortholattice ${\bf D}_2\oplus ({\bf D}_2^2\boxplus {\bf D}_4)\oplus {\bf D}_2\in \SAOL \setminus \MOD $, while ${\bf M}_3={\bf D}_2^2\boxplus {\bf D}_3\in \MOD \setminus \WSDM $ and the modular \PBZ --lattice ${\bf M}$ represented below fails $\ji $, thus ${\bf M}\in \MOD \setminus V_{\BZ }(\OML \boxplus V_{\BZ }(\AOL ))$.

\begin{center}\begin{tabular}{ccc}
\begin{picture}(60,95)(0,0)
\put(-18,83){${\bf D}_2\oplus ({\bf D}_2^2\boxplus {\bf D}_4)\oplus {\bf D}_2:$}
\put(30,0){\circle*{3}}
\put(30,20){\circle*{3}}
\put(30,30){\circle*{3}}
\put(15,35){\circle*{3}}
\put(45,35){\circle*{3}}
\put(30,40){\circle*{3}}
\put(30,50){\circle*{3}}
\put(30,70){\circle*{3}}
\put(28,-10){$0$}
\put(28,73){$1$}
\put(30,0){\line(0,1){70}}
\put(30,20){\line(-1,1){15}}
\put(30,20){\line(1,1){15}}
\put(30,50){\line(-1,-1){15}}
\put(30,50){\line(1,-1){15}}
\end{picture}
&\hspace*{25pt}
\begin{picture}(80,95)(0,0)
\put(20,75){${\bf M}:$}
\put(20,0){\circle*{3}}
\put(0,20){\circle*{3}}
\put(-8,18){$a$}
\put(20,40){\circle*{3}}
\put(13,38){$b$}
\put(40,20){\circle*{3}}
\put(41,14){$d^{\prime }$}
\put(40,40){\circle*{3}}
\put(42,37){$c$}
\put(60,40){\circle*{3}}
\put(62,34){$b^{\prime }$}
\put(40,60){\circle*{3}}
\put(32,58){$d$}
\put(80,60){\circle*{3}}
\put(82,57){$a^{\prime }$}
\put(60,80){\circle*{3}}
\put(18,-10){$0$}
\put(58,83){$1$}
\put(20,0){\line(-1,1){20}}
\put(20,0){\line(1,1){60}}
\put(0,20){\line(1,1){60}}
\put(40,20){\line(-1,1){20}}
\put(40,20){\line(0,1){40}}
\put(60,40){\line(-1,1){20}}
\put(80,60){\line(-1,1){20}}
\end{picture}
&
\begin{picture}(53,95)(0,0)
\put(0,40){\begin{tabular}{l|l}
$x$ & $x^{\sim {\bf M}}$\\ \hline 
$0$ & $1$\\ 
$a$ & $a^{\prime }$\\ 
$a^{\prime }$ & $a$\\ 
$b$ & $0$\\ 
$b^{\prime }$ & $a$\\ 
$c=c^{\prime }$ & $0$\\ 
$d$ & $0$\\ 
$d^{\prime }$ & $a$\\ 
$1$ & $0$\end{tabular}}
\end{picture}\end{tabular}\end{center}\vspace*{10pt}

We will see more results on the lattice reducts of antiortholattices, as well as of those in $\MOD $ and $\DIST $, in the following section.

\begin{lemma}{\rm \cite{bj}} If a finite algebra ${\bf A}$ belongs to a congruence--distributive variety $\V $, then ${\rm Si}(V_{\V }({\bf A}))\subseteq \H _{\V }\S _{\V }({\bf A})$.
\label{jonssonslemma}\end{lemma}

Note that, for any antiortholattice ${\bf A}$, we have: ${\bf A}\vDash \sdm $ iff $0$ is meet--irreducible in ${\bf A}_l$. Note also that, for any cardinal number $\kappa $, ${\bf D}_2^{\kappa }\oplus {\bf D}_2^{\kappa }$ and ${\bf D}_2^{\kappa }\oplus {\bf D}_2\oplus {\bf D}_2^{\kappa }$ are distributive antiortholattices, and ${\bf D}_2^0\oplus {\bf D}_2^0\cong {\bf D}_1$, ${\bf D}_2^0\oplus {\bf D}_2\oplus {\bf D}_2^0\cong {\bf D}_2$, ${\bf D}_2^1\oplus {\bf D}_2^1\cong {\bf D}_3$, ${\bf D}_2^1\oplus {\bf D}_2\oplus {\bf D}_2^1\cong {\bf D}_4$ and $V_{\BZ }({\bf D}_1)=\T \subsetneq V_{\BZ }({\bf D}_2)=\BA \subsetneq V_{\BZ }({\bf D}_3)\subsetneq V_{\BZ }({\bf D}_4)\subsetneq V_{\BZ }({\bf D}_5)\subsetneq V_{\BZ }({\bf D}_2^2\oplus {\bf D}_2^2)\subseteq \DIST \subsetneq V_{\BZ }(\AOL )$; see Section \ref{geninsaol} for the first strict inclusions here and notice that ${\bf D}_5\in \S _{\BZ }({\bf D}_2^2\oplus {\bf D}_2^2)$ and ${\bf D}_5\vDash \sdm $, while ${\bf D}_2^2\oplus {\bf D}_2^2\nvDash \sdm $.

\begin{center}\begin{tabular}{cc}
\begin{picture}(40,57)(0,0)
\put(1,55){${\bf D}_2^2\oplus {\bf D}_2^2$:}
\put(20,0){\circle*{3}}
\put(10,10){\circle*{3}}
\put(30,10){\circle*{3}}
\put(20,20){\circle*{3}}
\put(10,30){\circle*{3}}
\put(30,30){\circle*{3}}
\put(20,40){\circle*{3}}
\put(20,0){\line(1,1){10}}
\put(20,0){\line(-1,1){10}}
\put(20,40){\line(1,-1){10}}
\put(20,40){\line(-1,-1){10}}
\put(10,10){\line(1,1){20}}
\put(30,10){\line(-1,1){20}}
\put(18,-9){$0$}
\put(18,43){$1$}
\put(3,8){$a$}
\put(32,7){$b$}
\put(24,18){$c=c^{\prime }$}
\put(33,27){$a^{\prime }$}
\put(3,28){$b^{\prime }$}
\end{picture} &\hspace*{85pt}
\begin{picture}(40,57)(0,0)
\put(-65,55){${\bf D}_2^2\oplus {\bf D}_2\oplus {\bf D}_2^2$:}
\put(20,0){\circle*{3}}
\put(10,10){\circle*{3}}
\put(30,10){\circle*{3}}
\put(20,20){\circle*{3}}
\put(20,35){\circle*{3}}
\put(10,45){\circle*{3}}
\put(30,45){\circle*{3}}
\put(20,55){\circle*{3}}
\put(20,0){\line(1,1){10}}
\put(20,0){\line(-1,1){10}}
\put(20,20){\line(0,1){15}}
\put(20,35){\line(1,1){10}}
\put(20,35){\line(-1,1){10}}
\put(20,55){\line(1,-1){10}}
\put(20,55){\line(-1,-1){10}}
\put(10,10){\line(1,1){10}}
\put(30,10){\line(-1,1){10}}
\put(18,-9){$0$}
\put(18,58){$1$}
\put(3,8){$a$}
\put(32,7){$b$}
\put(33,42){$a^{\prime }$}
\put(3,43){$b^{\prime }$}
\put(23,19){$c$}
\put(23,32){$c^{\prime }$}
\end{picture}\end{tabular}\end{center}\vspace*{3pt}

\begin{lemma} Let $I$ and $J$ be sets such that $\emptyset \neq I\subsetneq J$. Then: ${\bf D}_2^I\oplus {\bf D}_2^I\in \H _{\BZ }({\bf D}_2^I\oplus {\bf D}_2\oplus {\bf D}_2^I)$ and ${\bf D}_2^I\oplus {\bf D}_2\oplus {\bf D}_2^I\in \S _{\BZ }({\bf D}_2^J\oplus {\bf D}_2^J)$.\label{distsets}\end{lemma}

\begin{proof} Clearly, ${\bf D}_2^I\oplus {\bf D}_2^I\cong _{\BZ }({\bf D}_2^I\oplus {\bf D}_2\oplus {\bf D}_2^I)/(\Delta _{D_2^I}\oplus \nabla _{D_2}\oplus \Delta _{D_2^I})$.

For all $i,j\in J$, let $\delta _{i,j}=\begin{cases}0, & i\neq j,\\ 1, & i=j,\end{cases}\in D_2$. For every $j\in J$, let $a_j=(\delta _{j,t})_{t\in J}\in D_2^J\subset D_2^J\oplus D_2^J$, so that $\{a_j:j\in J\}={\rm At}({\bf D}_2^J)={\rm At}({\bf D}_2^J\oplus {\bf D}_2^J)$. Then $\displaystyle a_I=\bigvee _{i\in I}a_i<1^{{\bf D}_2^J}$ since $J\setminus I\neq \emptyset $, so that $(a_I]$ is a proper ideal of $D_2^J$, which is clearly a Boolean lattice isomorphic to $D_2^I$. Let $h:D_2^I\rightarrow (a_I]$ be a lattice isomorphism and define $f:D_2^I\oplus D_2\oplus D_2^I\rightarrow D_2^J\oplus D_2^J$, for all $x\in D_2^I$, $f(x)=h(x)$ and $f(x^{\prime })=h(x)^{\prime }$. Clearly, $f$ is a BZ--lattice embedding of ${\bf D}_2^I\oplus {\bf D}_2\oplus {\bf D}_2^I$ into ${\bf D}_2^J\oplus {\bf D}_2^J$.\end{proof} 

\begin{lemma} For any cardinal numbers $\kappa ,\mu $ with $\kappa <\mu $:\begin{itemize}
\item ${\bf D}_2^{\kappa }\oplus {\bf D}_2^{\kappa }\in \H _{\BZ }({\bf D}_2^{\kappa }\oplus {\bf D}_2\oplus {\bf D}_2^{\kappa })$;
\item ${\bf D}_2^{\kappa }\oplus {\bf D}_2\oplus {\bf D}_2^{\kappa }\in \S _{\BZ }({\bf D}_2^{\mu }\oplus {\bf D}_2^{\mu })$;
\item if $\kappa \geq \aleph _0$, then ${\bf D}_2^{\kappa }\oplus {\bf D}_2\oplus {\bf D}_2^{\kappa }\in \S _{\BZ }({\bf D}_2^{\kappa }\oplus {\bf D}_2^{\kappa })$.\end{itemize}\label{subgendist}\end{lemma}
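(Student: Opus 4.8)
The plan is to read off all three inclusions from the structural results already established: Lemma~\ref{cggendist}, which exhibits the congruence of ${\bf D}_2^\kappa \oplus {\bf D}_2\oplus {\bf D}_2^\kappa $ whose quotient is ${\bf D}_2^\kappa \oplus {\bf D}_2^\kappa $, and Lemma~\ref{distsets}, which supplies the embeddings between the distributive antiortholattices of the forms ${\bf D}_2^I\oplus {\bf D}_2\oplus {\bf D}_2^I$ and ${\bf D}_2^J\oplus {\bf D}_2^J$, together with two elementary facts about cardinals. Consequently the argument is short, and I do not expect a genuine obstacle; in effect the content was packaged into the two preceding lemmas, and the present statement merely reorganizes it in terms of cardinals, the third item additionally using that $\kappa $ is infinite.

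For the first item I would invoke Lemma~\ref{cggendist} directly: when $\kappa \neq 0$ it gives $\theta _\kappa =\Delta _{D_2^\kappa }\oplus \nabla _{D_2}\oplus \Delta _{D_2^\kappa }\in {\rm Con}_{\BZ }({\bf D}_2^\kappa \oplus {\bf D}_2\oplus {\bf D}_2^\kappa )$ with $({\bf D}_2^\kappa \oplus {\bf D}_2\oplus {\bf D}_2^\kappa )/\theta _\kappa \cong {\bf D}_2^\kappa \oplus {\bf D}_2^\kappa $, hence ${\bf D}_2^\kappa \oplus {\bf D}_2^\kappa \in \H _{\BZ }({\bf D}_2^\kappa \oplus {\bf D}_2\oplus {\bf D}_2^\kappa )$; the case $\kappa =0$ reduces to the clear fact that ${\bf D}_1\in \H _{\BZ }({\bf D}_2)$.

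For the second item I would fix a set $J$ with $|J|=\mu $ and, using $\kappa <\mu $, choose $I\subseteq J$ with $|I|=\kappa $, so that $I\subsetneq J$ since $|I|<|J|$. If $\kappa \geq 1$, then $I\neq \emptyset $, so Lemma~\ref{distsets} yields ${\bf D}_2^I\oplus {\bf D}_2\oplus {\bf D}_2^I\in \S _{\BZ }({\bf D}_2^J\oplus {\bf D}_2^J)$; the bounded lattice isomorphisms ${\bf D}_2^I\cong {\bf D}_2^\kappa $ and ${\bf D}_2^J\cong {\bf D}_2^\mu $, which promote to BZ--lattice isomorphisms of the associated antiortholattices (all of them carrying the trivial Brouwer complement), then turn this into ${\bf D}_2^\kappa \oplus {\bf D}_2\oplus {\bf D}_2^\kappa \in \S _{\BZ }({\bf D}_2^\mu \oplus {\bf D}_2^\mu )$. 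The degenerate case $\kappa =0$ is handled separately, since ${\bf D}_2^0\oplus {\bf D}_2\oplus {\bf D}_2^0\cong {\bf D}_2$ and $\{0,1\}$ is the universe of a subalgebra of the nontrivial BZ--lattice ${\bf D}_2^\mu \oplus {\bf D}_2^\mu $ isomorphic to ${\bf D}_2$.

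For the third item the one extra ingredient, and the only point worth a second's thought, is that an infinite set has a proper subset of the same cardinality. I would take a set $J$ with $|J|=\kappa \geq \aleph _0$ and some $x\in J$ and put $I=J\setminus \{x\}$, so that $\emptyset \neq I\subsetneq J$ and $|I|=\kappa $. Lemma~\ref{distsets} then gives ${\bf D}_2^I\oplus {\bf D}_2\oplus {\bf D}_2^I\in \S _{\BZ }({\bf D}_2^J\oplus {\bf D}_2^J)$, and, after the isomorphisms ${\bf D}_2^I\cong {\bf D}_2^J\cong {\bf D}_2^\kappa $, this reads ${\bf D}_2^\kappa \oplus {\bf D}_2\oplus {\bf D}_2^\kappa \in \S _{\BZ }({\bf D}_2^\kappa \oplus {\bf D}_2^\kappa )$, as required.
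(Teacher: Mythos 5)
Your proof is correct and follows exactly the route the paper takes: its entire proof of Lemma \ref{subgendist} is ``By Lemmas \ref{cggendist} and \ref{distsets}.'' You have merely spelled out the cardinal-to-set translation and the degenerate case $\kappa =0$ (which Lemmas \ref{cggendist} and \ref{distsets} formally exclude), which is a reasonable bit of extra care but not a different argument.
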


\begin{proof}  By Lemma \ref{distsets}.\end{proof} 

Note from the previous lemma that, for any cardinal numbers $\kappa ,\mu $ with $\kappa <\mu $, we have ${\bf D}_2^{\kappa }\oplus {\bf D}_2^{\kappa }\in \H _{\BZ }(\S _{\BZ }({\bf D}_2^{\mu }$\linebreak $\oplus {\bf D}_2^{\mu }))$ and ${\bf D}_2^{\kappa }\oplus {\bf D}_2\oplus {\bf D}_2^{\kappa }\in \S _{\BZ }(\H _{\BZ }({\bf D}_2^{\mu }\oplus {\bf D}_2\oplus {\bf D}_2^{\mu }))\subseteq \H _{\BZ }(\S _{\BZ }({\bf D}_2^{\mu }\oplus {\bf D}_2\oplus {\bf D}_2^{\mu }))$.

\begin{proposition}\begin{itemize}
\item For any cardinal numbers $\kappa ,\lambda ,\mu $ with $\kappa \leq \lambda <\mu $, we have: $V_{\BZ }({\bf D}_2^{\kappa }\oplus {\bf D}_2^{\kappa })\subseteq V_{\BZ }({\bf D}_2^{\lambda }\oplus {\bf D}_2^{\lambda })\subseteq V_{\BZ }({\bf D}_2^{\lambda }\oplus {\bf D}_2\oplus {\bf D}_2^{\lambda })\subseteq V_{\BZ }({\bf D}_2^{\mu }\oplus {\bf D}_2^{\mu })$.
\item For any cardinal number $\kappa \geq \aleph _0$, $V_{\BZ }({\bf D}_2^{\kappa }\oplus {\bf D}_2^{\kappa })=V_{\BZ }({\bf D}_2^{\kappa }\oplus {\bf D}_2\oplus {\bf D}_2^{\kappa })$.
\item For any cardinal number $\mu $, $V_{\BZ }(\{{\bf D}_2^{\kappa }\oplus {\bf D}_2^{\kappa },{\bf D}_2^{\kappa }\oplus {\bf D}_2\oplus {\bf D}_2^{\kappa }:\kappa \mbox{ a cardinal number}\})=V_{\BZ }(\{{\bf D}_2^{\kappa }\oplus {\bf D}_2^{\kappa }:\kappa \geq \mu \mbox{ a cardinal number}\})=V_{\BZ }(\{{\bf D}_2^{\kappa }\oplus {\bf D}_2\oplus {\bf D}_2^{\kappa }:\kappa \geq \mu \mbox{ a cardinal number}\})$.\end{itemize}\label{ongendist}\end{proposition}

\begin{proof}  By Lemma \ref{subgendist}.\end{proof}  

\begin{lemma} For any nonzero cardinal number $\kappa $:\begin{itemize}
\item the antiortholattice ${\bf D}_2^{\kappa }\oplus {\bf D}_2^{\kappa }$ is simple;
\item the antiortholattice ${\bf D}_2^{\kappa }\oplus {\bf D}_2\oplus {\bf D}_2^{\kappa }$ is subdirectly irreducible, having the congruence lattice isomorphic to the three--element chain, with the single nontrivial congruence $\theta _{\kappa }=\Delta _{D_2^{\kappa }}\oplus \nabla _{D_2}\oplus \Delta _{D_2^{\kappa }}$.\end{itemize}\label{cggendist}\end{lemma}

\begin{proof}  By Lemma \ref{cgordsum} and the fact that ${\bf D}_2^{\kappa }\in \BA \subseteq \OML $, so ${\bf D}_2^{\kappa }$ is congruence--regular, thus ${\rm Con}_{01}({\bf D}_2^{\kappa })=\{\Delta _{{\bf D}_2^{\kappa }}\}\cong {\bf D}_1$.\end{proof} 

\begin{proposition} For any nonzero natural number $n$, we have the strict inclusions: $V_{\BZ }({\bf D}_2^n\oplus {\bf D}_2^n)\subsetneq V_{\BZ }({\bf D}_2^n\oplus {\bf D}_2\oplus {\bf D}_2^n)\subsetneq V_{\BZ }({\bf D}_2^{n+1}\oplus {\bf D}_2^{n+1})$.\label{strictincl}\end{proposition}

\begin{proof} Let $n\in \N ^*$. By Proposition \ref{ongendist}, $V_{\BZ }({\bf D}_2^n\oplus {\bf D}_2^n)\subseteq V_{\BZ }({\bf D}_2^n\oplus {\bf D}_2\oplus {\bf D}_2^n)\subseteq V_{\BZ }({\bf D}_2^{n+1}\oplus {\bf D}_2^{n+1})$. By Lemma \ref{jonssonslemma}, ${\bf D}_2^n\oplus {\bf D}_2\oplus {\bf D}_2^n\in {\rm Si}(V_{\BZ }({\bf D}_2^n\oplus {\bf D}_2\oplus {\bf D}_2^n))\setminus {\rm Si}(V_{\BZ }({\bf D}_2^n\oplus {\bf D}_2^n))$ and ${\bf D}_2^{n+1}\oplus {\bf D}_2^{n+1}\in {\rm Si}(V_{\BZ }({\bf D}_2^{n+1}\oplus {\bf D}_2^{n+1}))\setminus {\rm Si}(V_{\BZ }({\bf D}_2^n\oplus {\bf D}_2\oplus {\bf D}_2^n))$. Hence the strict inclusions in the enunciation.\end{proof} 

\begin{remark} Lemmas \ref{jonssonslemma} and \ref{cggendist}, the strict inclusion mentioned above and Proposition \ref{strictincl} show that $V_{\BZ }({\bf D}_5)\prec V_{\BZ }({\bf D}_2^2\oplus {\bf D}_2^2)\prec V_{\BZ }({\bf D}_2^2\oplus {\bf D}_2\oplus {\bf D}_2^2)$.\end{remark}

\begin{theorem} For any $n\in \N $ and any cardinal number $\kappa >n$:\begin{itemize}
\item $V_{\BZ }({\bf D}_2^n\oplus {\bf D}_2^n)\subsetneq V_{\BZ }({\bf D}_2^n\oplus {\bf D}_2\oplus {\bf D}_2^n)\subsetneq V_{\BZ }({\bf D}_2^{\kappa }\oplus {\bf D}_2^{\kappa })\subseteq V_{\BZ }({\bf D}_2^{\kappa }\oplus {\bf D}_2\oplus {\bf D}_2^{\kappa })\subseteq \DIST $;
\item if $\kappa $ is infinite, then $V_{\BZ }({\bf D}_2^n\oplus {\bf D}_2^n)\subsetneq V_{\BZ }({\bf D}_2^n\oplus {\bf D}_2\oplus {\bf D}_2^n)\subsetneq V_{\BZ }({\bf D}_2^{\kappa }\oplus {\bf D}_2^{\kappa })=V_{\BZ }({\bf D}_2^{\kappa }\oplus {\bf D}_2\oplus {\bf D}_2^{\kappa })\subseteq \DIST $;\end{itemize}

\noindent thus we have the following infinite ascending chain of subvarieties of $\DIST $, with $n\in \N \setminus \{0,1\}$ in what follows: $\T =V_{\BZ }({\bf D}_1)=V_{\BZ }({\bf D}_2^0\oplus {\bf D}_2^0)\subsetneq \BA =V_{\BZ }({\bf D}_2)=V_{\BZ }({\bf D}_2^0\oplus {\bf D}_2\oplus {\bf D}_2^0)\subsetneq V_{\BZ }({\bf D}_3)=V_{\BZ }({\bf D}_2^1\oplus {\bf D}_2^1)\subsetneq V_{\BZ }({\bf D}_4)=V_{\BZ }({\bf D}_2^1\oplus {\bf D}_2\oplus {\bf D}_2^1)\subsetneq \ldots \subsetneq V_{\BZ }({\bf D}_2^n\oplus {\bf D}_2^n)\subsetneq V_{\BZ }({\bf D}_2^n\oplus {\bf D}_2\oplus {\bf D}_2^n)\subsetneq V_{\BZ }({\bf D}_2^{n+1}\oplus {\bf D}_2^{n+1})\subsetneq V_{\BZ }({\bf D}_2^{n+1}\oplus {\bf D}_2\oplus {\bf D}_2^{n+1})\subsetneq \ldots \subsetneq V_{\BZ }({\bf D}_2^{\aleph _0}\oplus {\bf D}_2^{\aleph _0})=V_{\BZ }({\bf D}_2^{\aleph _0}\oplus {\bf D}_2\oplus {\bf D}_2^{\aleph _0})\subseteq \DIST $.\label{notgendist}\end{theorem}

\begin{proof}  By Propositions \ref{strictincl} and \ref{ongendist}.\end{proof} 

\begin{openproblem} Determine whether $V_{\BZ }({\bf D}_2^{\kappa }\oplus {\bf D}_2^{\kappa })=V_{\BZ }({\bf D}_2^{\lambda }\oplus {\bf D}_2^{\lambda })$ for all infinite cardinal numbers $\kappa $ and $\lambda $.

Note that, since $\PBZs $ has finite type, it has at most continuum many subvarieties, so the ascending chain $V_{\BZ }({\bf D}_2^{\aleph _0}\oplus {\bf D}_2^{\aleph _0})=V_{\BZ }({\bf D}_2^{\aleph _0}\oplus {\bf D}_2\oplus {\bf D}_2^{\aleph _0})\subseteq \ldots \subseteq V_{\BZ }({\bf D}_2^{\kappa }\oplus {\bf D}_2^{\kappa })=V_{\BZ }({\bf D}_2^{\kappa }\oplus {\bf D}_2\oplus {\bf D}_2^{\kappa })\subseteq \ldots $ ($\kappa >\aleph _0$ a cardinal number) can not be strictly ascending up to any cardinality.\end{openproblem}

Finding axiomatizations for the varieties above $V_{\BZ }({\bf D}_4)$ in the preceeding ascending chain might be tricky, but we can notice the following, which also constitutes an alternate proof for half of the previous ascending chain being infinite.

For any $n\in \N \setminus \{0,1\}$, let us consider the equation:\vspace*{-5pt}\begin{flushleft}\begin{tabular}{ll}
$\cn (n)$ & $x_1^{\sim }\vee \ldots \vee x_n^{\sim }\vee (\bigwedge _{1\leq i<j\leq n}(x_i\wedge x_j)^{\sim }\wedge (x_1\vee \ldots \vee x_n))\approx $\\ 
& $x_1^{\sim }\vee \ldots \vee x_n^{\sim }\vee (\bigwedge _{1\leq i<j\leq n}(x_i\wedge x_j)^{\sim }\wedge x_1^{\prime }\wedge \ldots \wedge x
_n^{\prime })$\end{tabular}\end{flushleft}

\begin{proposition} For any $n\in \N \setminus \{0,1\}$ and any cardinal numbers $\kappa \geq n$ and $\lambda >n$:\begin{enumerate}
\item\label{eqcnd1} ${\bf D}_2^n\oplus {\bf D}_2^n\vDash \cn (n)$;
\item\label{eqcnd2} ${\bf D}_2^{\lambda }\oplus {\bf D}_2^{\lambda }\nvDash \cn (n)$ and ${\bf D}_2^{\kappa }\oplus {\bf D}_2\oplus {\bf D}_2^{\kappa }\nvDash \cn (n)$.\end{enumerate}\label{eqcnd}\end{proposition}

\begin{proof}  (\ref{eqcnd1}) The following argument is based on the observation that $a_1,\ldots ,a_n$ are the $n$ distinct atoms of the Boolean algebra ${\bf D}_2^n$ iff $a_1,\ldots ,a_n$ are nonzero and satisfy $a_i\wedge a_j=0$ for every $i,j\in [1,n]$ such that $i<j$ iff $a_1,\ldots ,a_n$ are nonzero and $\bigvee _{1\leq i<j\leq n}(a_i\wedge a_j)=0$, hence $a_1,\ldots ,a_n$ are the $n$ distinct atoms of the antiortholattice ${\bf D}_2^n$ iff $a_1^{\sim }=\ldots =a_n^{\sim }=0$ and $(\bigvee _{1\leq i<j\leq n}(a_i\wedge a_j))^{\sim }=1$ iff $a_1^{\sim }\vee \ldots \vee a_n^{\sim }=0$ and $\bigwedge _{1\leq i<j\leq n}(a_i\wedge a_j)^{\sim }=1$.

If, in $\cn (n)$, we take $x_i=0$ for some $i\in [1,n]$, then we obtain $1=1$. If, in $\cn (n)$, we replace $x_1,\ldots ,x_n$ by the $n$ atoms of ${\bf D}_2^n\oplus {\bf D}_2^n$, then both the lhs and the rhs equal the fixpoint of the Kleene complement in ${\bf D}_2^n\oplus {\bf D}_2^n$. Any other values for the variables $x_1,\ldots ,x_n$ in $\cn (n)$ produce the equality $0=0$.

\noindent (\ref{eqcnd2}) Replace $x_1,\ldots ,x_n$ in $\cn (n)$ by $n$ of the at least $n+1$ atoms of ${\bf D}_2^{\lambda }\oplus {\bf D}_2^{\lambda }$, then by $n$ of the at least $n$ atoms of ${\bf D}_2^{\kappa }\oplus {\bf D}_2\oplus {\bf D}_2^{\kappa }$.\end{proof} 

\section{The Lattice of Subvarieties of $\mathbb{SAOL}$}
\label{geninsaol}

By two observations recalled in Sections \ref{preliminaries} and \ref{aolmoddist}, respectively, the \PBZ --lattices with the $0$ meet--irreducible are exactly the antiortholattices satisfying \sdm . The members of ${\bf D}_2\oplus \KL \oplus {\bf D}_2\subset \AOL $ are exactly the \PBZ --lattices with the $0$ strictly meet--irreducible, since, clearly, for any BI--lattice ${\bf K}$, we have: ${\bf D}_2\oplus {\bf K}\oplus {\bf D}_2\in \KL $ iff ${\bf K}\in \KL $. Of course, a similar property holds for modularity or distributivity instead of the Kleene condition.

\begin{lemma} $\AOL \cap \SDM =\S _{\BZ }({\bf D}_2\oplus \KL \oplus {\bf D}_2)$.\label{embedaols}\end{lemma}

\begin{proof} Any nontrivial antiortholattice $\mathbf{L}$ in which $0^{\mathbf{L}}$ is meet--irreducible is a subalgebra of the antiortholattice $\mathbf{A}={\bf D}_2\oplus {\bf L}_{bi}\oplus {\bf D}_2$, because the map $f:L\rightarrow A$ defined by $f(0^{\mathbf{L}})=0^{\mathbf{A}}$, $f(1^{\mathbf{L}})=1^{\mathbf{A}}$ and $f(x)=x$ for all $x\in L\setminus \{0^{\mathbf{L}},1^{\mathbf{L}}\}$ is an embedding of BZ--lattices. So $\AOL \cap \SDM =\{\mathbf{L}\in \AOL :0^{\mathbf{L}}\in {\rm Mi}(\mathbf{L}_l)\}\subseteq \S _{\BZ }({\bf D}_2\oplus \KL \oplus {\bf D}_2)\subseteq \AOL \cap \SDM $ since $\AOL $ and thus $\AOL \cap \SDM $ is closed w.r.t. subalgebras.\end{proof}

Clearly, for any bounded lattices or BI--lattices ${\bf L}$ and ${\bf M}$, ${\bf D}_2\oplus {\bf L}\oplus {\bf D}_2={\bf D}_2\oplus {\bf M}\oplus {\bf D}_2$ iff ${\bf L}={\bf M}$, thus, for any classes $\C $ and $\D $ of bounded lattices or BI--lattices, ${\bf D}_2\oplus \C \oplus {\bf D}_2\subsetneq {\bf D}_2\oplus \D \oplus {\bf D}_2$ iff $\C \subsetneq \D $. Hence the poset in the following Hasse diagram is embedded (as a poset) in the Boolean lattice of the set of subsets of ${\bf D}_2\oplus \KL \oplus {\bf D}_2$. Now let us investigate the relations between the class operators applied to a class $\C $ of BI--lattices and these operators applied to the class ${\bf D}_2\oplus \C \oplus {\bf D}_2$, and, in the process, obtain an independent proof of the result from \cite{PBZ2} stating that $V_{\BZ }({\bf D}_5)=\SAOL \cap \DIST =\SDM \cap \DIST $, hence $V_{\BZ }({\bf D}_5)$ is relatively axiomatized by $\{\sdm ,\dist \}$ w.r.t. $V_{\BZ }(\AOL )$, as well as w.r.t. $\PBZs $.

\begin{center}\begin{picture}(80,42)(0,0)
\put(40,0){\circle*{3}}
\put(60,10){\circle*{3}}
\put(60,20){\circle*{3}}
\put(60,30){\circle*{3}}
\put(40,40){\circle*{3}}
\put(20,20){\circle*{3}}
\put(40,0){\line(2,1){20}}
\put(40,40){\line(2,-1){20}}
\put(60,10){\line(0,1){20}}
\put(40,0){\line(-1,1){20}}
\put(40,40){\line(-1,-1){20}}
\put(12,-10){${\bf D}_2\oplus \BA \oplus {\bf D}_2$}
\put(7,43){${\bf D}_2\oplus \KL \oplus {\bf D}_2$}
\put(63,7){${\bf D}_2\oplus \MOL \oplus {\bf D}_2$}
\put(63,17){${\bf D}_2\oplus \OML \oplus {\bf D}_2$}
\put(63,27){${\bf D}_2\oplus \OL \oplus {\bf D}_2$}
\put(-47,17){${\bf D}_2\oplus \KA \oplus {\bf D}_2$}
\end{picture}\end{center}

\begin{lemma}{\rm \cite[Lemma 3.3.(1)]{PBZ2}} All subdirectly irreducible members of $V_{\BZ }(\AOL )$ are antiortholattices.\label{thesiareaols}\end{lemma}

\begin{remark} By Lemma \ref{thesiareaols}, every subvariety $\V $ of $V_{\BZ }(\AOL )$ satisfies $Si(\V )=Si(\V \cap V_{\BZ }(\AOL ))=Si(\V )\cap Si(V_{\BZ }(\AOL ))=Si(\V )\cap Si(\AOL )=Si(\V \cap \AOL )$ and thus $\V $ is generated by the (subdirectly irreducible) antiortholattices it contains.\label{whensiareaols}\end{remark}

\begin{lemma} For any subvariety $\V $ of $\SAOL $:\begin{itemize}
\item ${\rm Si}(\V )\subseteq \T \cup \S _{\BZ }({\bf D}_2\oplus {\rm Si}(\V )_{BI}\oplus {\bf D}_2)\subseteq \T \cup \S _{\BZ }({\bf D}_2\oplus \V _{BI}\oplus {\bf D}_2)$;
\item $\V \subseteq V_{\BZ }({\bf D}_2\oplus {\rm Si}(\V )_{BI}\oplus {\bf D}_2)\subseteq V_{\BZ }({\bf D}_2\oplus \V _{BI}\oplus {\bf D}_2)$.\end{itemize}\label{varvsbired}\end{lemma}

\begin{proof} By Lemmas \ref{embedaols} and \ref{thesiareaols}, any ${\bf A}\in {\rm Si}(\V )$ satisfies ${\bf A}\in \S _{\BZ }({\bf D}_2\oplus {\bf A}_{bi}\oplus {\bf D}_2)\subseteq \S _{\BZ }({\bf D}_2\oplus {\rm Si}(\V )_{BI}\oplus {\bf D}_2)$, hence ${\rm Si}(\V )\subseteq \S _{\BZ }({\bf D}_2\oplus {\rm Si}(\V )_{BI}\oplus {\bf D}_2)\subseteq \S _{\BZ }({\bf D}_2\oplus \V _{BI}\oplus {\bf D}_2)$, therefore $\V =V_{\BZ }({\rm Si}(\V ))\subseteq V_{\BZ }({\bf D}_2\oplus {\rm Si}(\V )_{BI}\oplus {\bf D}_2)\subseteq V_{\BZ }({\bf D}_2\oplus \V _{BI}\oplus {\bf D}_2)$.\end{proof} 

\begin{remark} In particular, ${\rm Si}(\SAOL )\subseteq \T \cup \S _{\BZ }({\bf D}_2\oplus \SAOL _{BI}\oplus {\bf D}_2)\subseteq \T \cup \S _{\BZ }({\bf D}_2\oplus \KL \oplus {\bf D}_2)\subseteq \SAOL $, thus ${\rm Si}(\SAOL )=\T \cup {\rm Si}(\S _{\BZ }(({\bf D}_2\oplus \KL \oplus {\bf D}_2)\cap \V ))$.\label{sisaol}\end{remark}

By Remark \ref{sibired}, for any class $\D $ of \PBZ --lattices, ${\rm Si}(\D _{BI})\subseteq {\rm Si}(\D )_{BI}$, thus ${\bf D}_2\oplus {\rm Si}(\D _{BI})\oplus {\bf D}_2\subseteq {\bf D}_2\oplus {\rm Si}(\D )_{BI}\oplus {\bf D}_2$, hence $V_{\BZ }({\bf D}_2\oplus {\rm Si}(\D  _{BI})\oplus {\bf D}_2)\subseteq V_{\BZ }({\bf D}_2\oplus {\rm Si}(\D ) _{BI}\oplus {\bf D}_2)$.

\begin{theorem} $\SAOL =V_{\BZ }({\bf D}_2\oplus {\rm Si}(\SAOL )_{BI}\oplus {\bf D}_2)=V_{\BZ }({\bf D}_2\oplus \SAOL _{BI}\oplus {\bf D}_2)=V_{\BZ }({\bf D}_2\oplus \KL \oplus {\bf D}_2)$.\label{saol}\end{theorem}

\begin{proof} By Lemma \ref{varvsbired}, $\SAOL \subseteq V_{\BZ }({\bf D}_2\oplus \SAOL _{BI}\oplus {\bf D}_2)\subseteq V_{\BZ }({\bf D}_2\oplus \KL \oplus {\bf D}_2)\subseteq \SAOL $, hence the equalities in the enunciation.\end{proof} 

For the last equality above, we could also have used directly Remark \ref{whensiareaols}, which shows that $\mathbb{SAOL}$ is ge\-ne\-ra\-ted by the (subdirectly irreducible) antiortholattices with the $0$ meet--irreducible and hence by ${\bf D}_2\oplus \KL \oplus {\bf D}_2$ according to Lemma \ref{embedaols}.

If a variety $\V $ of \PBZ --lattices is such that $\V =V_{\BZ }({\bf D}_2\oplus \V _{BI}\oplus {\bf D}_2)$, then, since ${\bf D}_3={\bf D}_2\oplus {\bf D}_1\oplus {\bf D}_2\in {\bf D}_2\oplus \V _{BI}\oplus {\bf D}_2\subseteq {\bf D}_2\oplus \KL \oplus {\bf D}_2$, it follows that $\V =V_{\BZ }({\bf D}_2\oplus \V _{BI}\oplus {\bf D}_2)\subseteq V_{\BZ }({\bf D}_2\oplus \KL \oplus {\bf D}_2)=\SAOL $ by Theorem \ref{saol}, and ${\bf D}_3\in \V $, thus ${\bf D}_3\in \V _{BI}$, thus ${\bf D}_5={\bf D}_2\oplus {\bf D}_3\oplus {\bf D}_2\in {\bf D}_2\oplus \V _{BI}\oplus {\bf D}_2$, hence ${\bf D}_5\in \V $, therefore $\V _{\BZ }({\bf D}_5)\subseteq \V \subseteq \SAOL $.

\begin{lemma} For any subvariety $\V $ of $\SAOL $, ${\rm Si}(\V )=\T \cup {\rm Si}(\S _{\BZ }(({\bf D}_2\oplus \KL \oplus {\bf D}_2)\cap \V ))$.\label{sivsubsaol}\end{lemma}

\begin{proof} By Remark \ref{sisaol}, ${\rm Si}(\V )={\rm Si}(\SAOL \cap \V )={\rm Si}(\SAOL )\cap \V =(\T \cup {\rm Si}(\S _{\BZ }({\bf D}_2\oplus \KL \oplus {\bf D}_2)))\cap \V =\T \cup ({\rm Si}(\S _{\BZ }({\bf D}_2\oplus \KL \oplus {\bf D}_2))\cap \V )=\T \cup {\rm Si}(\S _{\BZ }({\bf D}_2\oplus \KL \oplus {\bf D}_2)\cap \S _{\BZ }(\V ))=\T \cup {\rm Si}(\S _{\BZ }(({\bf D}_2\oplus \KL \oplus {\bf D}_2)\cap \V ))$.\end{proof}

Notice that, for any $k,n\in \N ^*$ with $k\leq n$, we have ${\bf D}_k\in \S _{\BI }\H _{\BI }({\bf D}_n)\subseteq V_{\BI }({\bf D}_n)$ and ${\bf D}_k\in \S _{\BZ }\H _{\BZ }({\bf D}_n)\subseteq V_{\BZ }({\bf D}_n)$; more precisely ${\bf D}_k$ is a quotient of ${\bf D}_n$ if $k$ is odd and $n$ is even, and ${\bf D}_k$ is a subalgebra of ${\bf D}_n$ in all the other cases.

\begin{lemma}\begin{enumerate}
\item\label{skaols123} $\I _{\BZ }(\{{\bf D}_1,{\bf D}_2,{\bf D}_3\})=\{{\bf L}\in \AOL :{\bf L}\vDash \sk \}=V_{\BZ }({\bf D}_3)\cap \AOL ={\rm Si}(V_{\BZ }({\bf D}_3))$.
\item\label{skaols4} $V_{\BZ }({\bf D}_3)\subsetneq V_{\BZ }({\bf D}_4)$.\end{enumerate}\label{skaols}\end{lemma}

\begin{proof}  (\ref{skaols123}) For any antiortholattice ${\bf L}$ and any $a\in L$, clearly ${\bf L}\vDash _{\{a\},\{0\}}\sk $ and ${\bf L}\vDash _{\{1\},\{a\}}\sk $, hence ${\bf L}\vDash \sk $ iff ${\bf L}\vDash _{L\setminus \{1\},L\setminus \{0\}}\sk $ iff $x\leq y$ for all $x\in L\setminus \{1\}$ and all $y\in L\setminus \{0\}$ iff $|L\setminus \{0,1\}|\leq 1$ iff $|L|\leq 3$ iff ${\bf L}\in \I _{\BZ }(\{{\bf D}_1,{\bf D}_2,{\bf D}_3\})$.

Clearly ${\bf D}_1,{\bf D}_2,{\bf D}_3\in V_{\BZ }({\bf D}_3)$, thus $\I _{\BZ }(\{{\bf D}_1,{\bf D}_2,{\bf D}_3\})\subseteq V_{\BZ }({\bf D}_3)\cap \AOL $. By the above, $V_{\BZ }({\bf D}_3)\vDash \sk $ and thus, if an antiortholattice ${\bf L}$ belongs to $V_{\BZ }({\bf D}_3)$, then ${\bf L}\vDash \sk $, hence ${\bf L}\in \I _{\BZ }(\{{\bf D}_1,{\bf D}_2,{\bf D}_3\})$.

The antiortholattices ${\bf D}_1$, ${\bf D}_2$ and ${\bf D}_3$ are simple, thus subdirectly irreducible, hence $\I _{\BZ }(\{{\bf D}_1,{\bf D}_2,{\bf D}_3\})\subseteq {\rm Si}(V_{\BZ }({\bf D}_3))\subseteq V_{\BZ }({\bf D}_3)\cap \AOL =\I _{\BZ }(\{{\bf D}_1,{\bf D}_2,{\bf D}_3\})$ by Lemma \ref{thesiareaols} and the above, therefore ${\rm Si}(V_{\BZ }({\bf D}_3))=\I _{\BZ }(\{{\bf D}_1,{\bf D}_2,{\bf D}_3\})$.

\noindent (\ref{skaols4}) Of course, $V_{\BZ }({\bf D}_3)\subseteq V_{\BZ }({\bf D}_4)$ since ${\bf D}_3\in \H _{\BZ }({\bf D}_4)$. If we assume that ${\bf D}_4\in V_{\BZ }({\bf D}_3)$, then, since ${\bf D}_4$ is an antiortholattice, (\ref{skaols123}) gives us the contradiction ${\bf D}_4\in V_{\BZ }({\bf D}_3)\cap \AOL =\I _{\BZ }(\{{\bf D}_1,{\bf D}_2,{\bf D}_3\})$. Therefore $V_{\BZ }({\bf D}_3)\subsetneq V_{\BZ }({\bf D}_4)$.\end{proof} 

Note that part of Lemma \ref{skaols} can be obtained by applying Lemma \ref{jonssonslemma}.

\begin{proposition} $V_{\BZ }({\bf D}_3)$ is relatively axiomatized by $\sk $ w.r.t. $V_{\BZ }(\AOL )$, thus by $\{\jo ,\sk \}$ w.r.t. $\PBZs $.\end{proposition}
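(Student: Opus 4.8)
The plan is to combine Remark~\ref{j0vaol} with Lemma~\ref{skaols}.(\ref{skaols123}), the key being that a variety is determined by its subdirectly irreducible members and that $\sk$ is preserved under the class operators. Write $\W$ for the subvariety of $\PBZs$ relatively axiomatized by $\{\jo ,\sk \}$, i.e. $\W =\{{\bf L}\in \PBZs :{\bf L}\vDash \jo \text{ and }{\bf L}\vDash \sk \}$; by Remark~\ref{j0vaol} this equals $\{{\bf L}\in V_{\BZ }(\AOL ):{\bf L}\vDash \sk \}$, so the claim is exactly that $\W =V_{\BZ }({\bf D}_3)$. One inclusion is immediate from Lemma~\ref{skaols}.(\ref{skaols123}): we showed there that $V_{\BZ }({\bf D}_3)\vDash \sk$, and of course $V_{\BZ }({\bf D}_3)\subseteq V_{\BZ }(\AOL )\vDash \jo$, so $V_{\BZ }({\bf D}_3)\subseteq \W$.

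For the reverse inclusion $\W \subseteq V_{\BZ }({\bf D}_3)$, first note that $\W$, being a variety, is generated by its subdirectly irreducible members: $\W =V_{\BZ }(Si(\W ))$. Since $\W \subseteq V_{\BZ }(\AOL )$, Remark~\ref{gensaol} (i.e. \cite[Lemma 3.3.(1)]{PBZ2}) gives $Si(\W )\subseteq Si(V_{\BZ }(\AOL ))\subseteq \AOL$, so every member of $Si(\W )$ is an antiortholattice; moreover each such algebra satisfies $\sk$ because $\W \vDash \sk$. Hence $Si(\W )\subseteq \{{\bf L}\in \AOL :{\bf L}\vDash \sk \}$, which by Lemma~\ref{skaols}.(\ref{skaols123}) is precisely $\I _{\BZ }(\{{\bf D}_1,{\bf D}_2,{\bf D}_3\})\subseteq V_{\BZ }({\bf D}_3)$. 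Therefore $\W =V_{\BZ }(Si(\W ))\subseteq V_{\BZ }({\bf D}_3)$, completing the equality.

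Putting the two inclusions together yields $V_{\BZ }({\bf D}_3)=\W$, i.e. $V_{\BZ }({\bf D}_3)$ is relatively axiomatized by $\sk$ w.r.t. $V_{\BZ }(\AOL )$ and, since $V_{\BZ }(\AOL )$ is relatively axiomatized by $\jo$ w.r.t. $\PBZs$ by Remark~\ref{j0vaol}, it is relatively axiomatized by $\{\jo ,\sk \}$ w.r.t. $\PBZs$.

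\textbf{Main obstacle.} The substantive content is entirely front-loaded into Lemma~\ref{skaols}.(\ref{skaols123}) — the characterization $\{{\bf L}\in \AOL :{\bf L}\vDash \sk \}=\I _{\BZ }(\{{\bf D}_1,{\bf D}_2,{\bf D}_3\})$ — and into the already-cited fact that subdirectly irreducibles of $V_{\BZ }(\AOL )$ are antiortholattices; given those, the argument here is a routine ``a variety is the variety generated by its subdirectly irreducibles'' manipulation. The one point requiring a little care is the legitimacy of passing from $\W \vDash \sk$ to ``every subdirectly irreducible member of $\W$ satisfies $\sk$'', which is trivial since $\sk$ is an identity (inequality) holding in all of $\W$, hence in each of its members. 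No genuine difficulty is expected beyond correctly invoking the earlier results.
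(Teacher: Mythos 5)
Your proof is correct and follows essentially the same route as the paper, whose proof is simply the citation ``By Lemma \ref{skaols}.(\ref{skaols123}) and Remark \ref{j0vaol}''; you have merely written out the standard subdirect-representation argument that those two citations compress. The only point worth noting is that your step $Si(\W )\subseteq Si(V_{\BZ }(\AOL ))$ is justified because subdirect irreducibility is an intrinsic property of the algebra, so it is exactly what the paper intends.
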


\begin{proof}  By Lemma \ref{skaols}.(\ref{skaols123}) and Remark \ref{j0vaol}.\end{proof} 

Note that, for any antiortholattice ${\bf L}$, any proper congruence of ${\bf L}$ has the classes of $0$ and $1$ singletons and any lattice congruence of ${\bf L}$ that preserves the Kleene complement and has the classes of $0$ and $1$ singletons also preserves the Brouwer complement of ${\bf L}$, that is: ${\rm Con}_{\BZ }({\bf L})={\rm Con}_{\BZ 01}({\bf L})\cup \{\nabla _L\}={\rm Con}_{\BI 01}({\bf L})\cup \{\nabla _L\}$ \cite{pbzsums}. If we now take a look at the congruences of the ordinal sums constructed in Section \ref{preliminaries}, we may notice that:

\begin{lemma}{\rm \cite{pbzsums}} For any bounded lattice ${\bf M}$ and any BI--lattice ${\bf K}$, if we denote by ${\bf L}={\bf M}\oplus {\bf K}\oplus {\bf M}^d$, then:\begin{itemize}
\item the BI--lattice ${\bf L}$ has ${\rm Con}_{\BI }({\bf L})=\{\alpha \oplus \beta \oplus \alpha ^{\prime }:\alpha \in {\rm Con}({\bf M}),\beta \in {\rm Con}_{\BI }({\bf K})\}\cong {\rm Con}({\bf M})\times {\rm Con}_{\BI }({\bf K})$,
\item if ${\bf M}$ is nontrivial and ${\bf K}$ is a pseudo--Kleene algebra, so that ${\bf L}$ is an antiortholattice, then ${\rm Con}_{\BZ }({\bf L})={\rm Con}_{\BI 01}({\bf L})\cup \{\nabla _L\}=\{\alpha \oplus \beta \oplus \alpha ^{\prime }:\alpha \in {\rm Con}_{01}({\bf M}),\beta \in {\rm Con}_{\BI }({\bf K})\}\cup \{\nabla _L\}\cong ({\rm Con}_{01}({\bf M})\times {\rm Con}_{\BI }({\bf K}))\oplus {\bf D}_2$,\end{itemize}

\noindent where $\alpha ^{\prime }=\{(a^{\prime },b^{\prime }):(a,b)\in \alpha \}\in {\rm Con}({\bf M}^d)={\rm Con}({\bf M})$ for all $\alpha \in {\rm Con}({\bf M})$.\label{cgordsum}\end{lemma}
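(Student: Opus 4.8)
The plan is to compute the congruence lattices of the ordinal sums ${\bf L}={\bf M}\oplus {\bf K}\oplus {\bf M}^d$ directly from the general facts recalled in Section \ref{algvar}, following the structure already laid out there. Recall that, for bounded lattices ${\bf N}_1$ with top and ${\bf N}_2$ with bottom, the map $(\alpha ,\beta )\mapsto \alpha \oplus \beta $ is a lattice isomorphism from ${\rm Con}({\bf N}_1)\times {\rm Con}({\bf N}_2)$ onto ${\rm Con}({\bf N}_1\oplus {\bf N}_2)$; iterating this (the excerpt notes that $\oplus $ is associative on congruences) for the three--fold sum $M\oplus K\oplus M^d$ gives a lattice isomorphism ${\rm Con}({\bf M})\times {\rm Con}({\bf K}_l)\times {\rm Con}({\bf M}^d)\cong {\rm Con}({\bf M}\oplus {\bf K}_l\oplus {\bf M}^d)$, with the triple $(\alpha ,\beta ,\gamma )$ sent to $\alpha \oplus \beta \oplus \gamma $. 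The first step is therefore to cut this down to the congruences that additionally preserve the involution $\cdot ^{\prime {\bf L}}$.

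For the BI--lattice part, the key step is to show that $\alpha \oplus \beta \oplus \gamma $ is compatible with $\cdot ^{\prime {\bf L}}$ if and only if $\gamma =\alpha ^{\prime}$ (the image of $\alpha $ under the dual isomorphism $f$, which as noted lies in ${\rm Con}({\bf M}^d)={\rm Con}({\bf M})$) and $\beta $ is a congruence of the BI--lattice ${\bf K}$, not merely of its lattice reduct. This is a routine verification: since $\cdot ^{\prime {\bf L}}$ restricts to $f$ on $M$, to $\cdot ^{\prime {\bf K}}$ on $K$, and to $f^{-1}$ on $M^d$, and since these three blocks are $\cdot ^{\prime {\bf L}}$--invariant as sets (up to the glueing points, which are fixed only setwise but cause no trouble because $1^{\bf M}=0^{\bf K}$ and $1^{\bf K}=0^{\bf M}$ are mapped to each other's partners consistently), a pair is collapsed together with its involutes exactly under the stated conditions. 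This yields ${\rm Con}_{\BI }({\bf L})=\{\alpha \oplus \beta \oplus \alpha ^{\prime}:\alpha \in {\rm Con}({\bf M}),\beta \in {\rm Con}_{\BI }({\bf K})\}$, and the map $(\alpha ,\beta )\mapsto \alpha \oplus \beta \oplus \alpha ^{\prime}$ is then visibly a lattice embedding of ${\rm Con}({\bf M})\times {\rm Con}_{\BI }({\bf K})$ into ${\rm Con}({\bf L})$, with image ${\rm Con}_{\BI }({\bf L})$, giving the asserted isomorphism.

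For the second bullet, assume ${\bf M}$ nontrivial and ${\bf K}$ a pseudo--Kleene algebra, so ${\bf L}$ is an antiortholattice (as recalled in Section \ref{algvar}). Here I invoke the fact, recalled just before the lemma from \cite{pbzsums}, that for any antiortholattice ${\rm Con}_{\BZ }({\bf L})={\rm Con}_{\BZ 01}({\bf L})\cup \{\nabla _L\}={\rm Con}_{\BI 01}({\bf L})\cup \{\nabla _L\}$; so it remains only to intersect the description of ${\rm Con}_{\BI }({\bf L})$ from the first bullet with ${\rm Con}_{01}({\bf L})$. A congruence $\alpha \oplus \beta \oplus \alpha ^{\prime}$ collapses nothing into the class of $0^{\bf L}=0^{\bf M}$ or $1^{\bf L}=1^{\bf M}$ precisely when $\alpha $ does not collapse $0^{\bf M}$ or $1^{\bf M}$ nontrivially (the blocks of $\beta $ inside $K$ never reach the global $0$ or $1$ since those are the glued endpoints, which are singletons for a $01$--congruence only constrains the ${\bf M}$--side), i.e.\ when $\alpha \in {\rm Con}_{01}({\bf M})$; $\beta $ stays arbitrary in ${\rm Con}_{\BI }({\bf K})$. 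Adjoining $\nabla _L$ on top gives ${\rm Con}_{\BZ }({\bf L})=\{\alpha \oplus \beta \oplus \alpha ^{\prime}:\alpha \in {\rm Con}_{01}({\bf M}),\beta \in {\rm Con}_{\BI }({\bf K})\}\cup \{\nabla _L\}$, and since $\nabla _L$ is the top element sitting above everything else, this poset is $({\rm Con}_{01}({\bf M})\times {\rm Con}_{\BI }({\bf K}))\oplus {\bf D}_2$ as claimed.

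The main obstacle, such as it is, is bookkeeping at the two glueing points $1^{\bf M}=0^{\bf K}$ and $1^{\bf K}=0^{\bf M}$: one must check that a triple $(\alpha ,\beta ,\gamma )$ really does assemble into a well--defined equivalence $\alpha \oplus \beta \oplus \gamma $ on $L$ (the excerpt's $\alpha \oplus \beta $ notation already records the correct merging of the top class of the lower piece with the bottom class of the upper piece) and that compatibility with $\wedge ,\vee $ at these junctions imposes no extra constraint — which holds because in an ordinal sum any element of the lower piece is below any element of the upper piece, so meets and joins across the junction are computed by the endpoints. Once that is in hand, everything else is the routine verification sketched above, and indeed the result is attributed to \cite{pbzsums}, so the proof can be kept brief.
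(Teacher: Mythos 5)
The paper itself offers no proof of this lemma --- it is quoted from \cite{pbzsums} --- so your direct verification (decompose ${\rm Con}$ of the threefold ordinal sum via $(\alpha ,\beta )\mapsto \alpha \oplus \beta $, cut down to the involution--compatible congruences, then invoke ${\rm Con}_{\BZ }({\bf L})={\rm Con}_{\BI 01}({\bf L})\cup \{\nabla _L\}$ for antiortholattices) is exactly the intended route, and your handling of the first bullet is correct. However, the last step contains a genuine error: you write $1^{\bf L}=1^{\bf M}$, but $1^{\bf M}=0^{\bf K}$ is an \emph{interior} glueing point of ${\bf L}$, whereas $1^{\bf L}=1^{{\bf M}^d}=f(0^{\bf M})$. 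Consequently the class of $1^{\bf L}$ under $\alpha \oplus \beta \oplus \alpha ^{\prime }$ is $(0^{\bf M}/\alpha )^{\prime }$, controlled by the class of $0^{\bf M}$ under $\alpha $ and not by the class of $1^{\bf M}$; the requirement that both $0^{\bf L}/\theta $ and $1^{\bf L}/\theta $ be singletons therefore collapses to the single condition $0^{\bf M}/\alpha =\{0^{\bf M}\}$, i.e.\ $\alpha \in {\rm Con}_0({\bf M})$, which is strictly weaker than $\alpha \in {\rm Con}_{01}({\bf M})$ for a general bounded lattice ${\bf M}$. Your argument lands on the printed condition only through this misidentification.

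The discrepancy is not cosmetic. Take ${\bf M}={\bf D}_3=\{0<m<1^{\bf M}\}$ and ${\bf K}={\bf D}_1$, so that ${\bf L}={\bf D}_5$, and let $\alpha =eq(\{0\},\{m,1^{\bf M}\})\in {\rm Con}_0({\bf M})\setminus {\rm Con}_{01}({\bf M})$. Then $\alpha \oplus \Delta _{D_1}\oplus \alpha ^{\prime }$ collapses exactly the three middle elements of ${\bf D}_5$, preserves the involution and the trivial Brouwer complement, and keeps the classes of $0$ and $1$ singletons, so it is a proper nontrivial element of ${\rm Con}_{\BZ }({\bf D}_5)$ that the set $\{\alpha \oplus \beta \oplus \alpha ^{\prime }:\alpha \in {\rm Con}_{01}({\bf D}_3),\beta \in {\rm Con}_{\BI }({\bf D}_1)\}\cup \{\nabla \}$ misses; that set would make ${\bf D}_5$ simple, while the correct count of three congruences is recovered from the alternative decomposition ${\bf D}_5={\bf D}_2\oplus {\bf D}_3\oplus {\bf D}_2$. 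So a correct version of the computation yields ${\rm Con}_0({\bf M})$ in place of ${\rm Con}_{01}({\bf M})$. The two coincide for every ${\bf M}$ to which the lemma is applied in this paper (namely ${\bf D}_2$ and the congruence--regular Boolean algebras ${\bf D}_2^{\kappa }$), so nothing downstream is affected, but your write--up should either derive ${\rm Con}_0({\bf M})$ and flag the discrepancy with the stated formula, or restrict to the cases where the two classes of congruences agree.
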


\begin{lemma}\begin{enumerate}
\item\label{si1} For any ${\bf L}\in \BI $ and any ${\bf K}\in \KL $, ${\rm Con}_{\BI }({\bf D}_2\oplus {\bf L}\oplus {\bf D}_2)=\{\Delta _{{\bf D}_2}\oplus \theta \oplus \Delta _{{\bf D}_2},\nabla _{{\bf D}_2}\oplus \theta \oplus \nabla _{{\bf D}_2}:\theta \in {\rm Con}_{\BI }({\bf L})\}\cong {\bf D}_2\times {\rm Con}_{\BI }({\bf L})$ and ${\rm Con}_{\BZ }({\bf D}_2\oplus {\bf K}\oplus {\bf D}_2)=\{\Delta _{D_2}\oplus \beta \oplus \Delta _{D_2}:\beta \in {\rm Con}_{\BI }({\bf K})\}\cup \{\nabla _{D_2\oplus K\oplus D_2}\}\cong {\rm Con}_{\BI }({\bf K})\oplus \Delta _{D_2}$.
\item\label{si2} For any class $\D \subseteq \KL $, we have, in the variety $\BZ $: ${\rm Si}({\bf D}_2\oplus \D \oplus {\bf D}_2)={\bf D}_2\oplus {\rm Si}(\D )\oplus {\bf D}_2$.\end{enumerate}\label{si}\end{lemma}

\begin{proof} (\ref{si1}) By Lemma \ref{cgordsum}.

\noindent (\ref{si2}) By (\ref{si1}), the antiortholattice ${\bf D}_2\oplus {\bf K}\oplus {\bf D}_2$ is subdirectly irreducible iff the pseudo--Kleene algebra ${\bf K}$ is subdirectly irreducible.\end{proof} 

\begin{lemma}\begin{enumerate}
\item\label{lclsop1} If $I$ is a non--empty set, then, for any families $({\bf L}_i)_{i\in I}\subseteq \BI $ and $({\bf K}_i)_{i\in I}\subseteq \KL $, we have: $\displaystyle {\bf D}_2\oplus (\prod _{i\in I}{\bf L}_i)\oplus {\bf D}_2\in \S _{\BI }(\prod _{i\in I}({\bf D}_2\oplus {\bf L}_i\oplus {\bf D}_2))$ and $\displaystyle {\bf D}_2\oplus (\prod _{i\in I}{\bf K}_i)\oplus {\bf D}_2\in \S _{\BZ }(\prod _{i\in I}({\bf D}_2\oplus {\bf K}_i\oplus {\bf D}_2))$.
\item\label{lclsop2} If ${\bf L}\in \BI $, ${\bf K}\in \KL $, ${\bf M}\in \S _{\BI }({\bf L})$ and ${\bf N}\in \S _{\BI }({\bf K})$, then ${\bf D}_2\oplus {\bf M}\oplus {\bf D}_2\in \S _{\BI }({\bf D}_2\oplus {\bf L}\oplus {\bf D}_2)$ and ${\bf D}_2\oplus {\bf N}\oplus {\bf D}_2\in \S _{\BZ }({\bf D}_2\oplus {\bf K}\oplus {\bf D}_2)$.
\item\label{lclsop2,5} If ${\bf K}\in \OL $, then $\S _{\BI }({\bf D}_2\oplus {\bf K}\oplus {\bf D}_2)=\{{\bf D}_2\}\cup ({\bf D}_2\oplus \S _{\BI }({\bf K})\oplus {\bf D}_2)$ and $\S _{\BZ }({\bf D}_2\oplus {\bf K}\oplus {\bf D}_2)=\{{\bf D}_2\}\cup ({\bf D}_2\oplus \S _{\BI }({\bf K})\oplus {\bf D}_2)$.
\item\label{lclsop3} If ${\bf L}\in \BI $, ${\bf K}\in \KL $, $\theta \in {\rm Con}_{\BI }({\bf L})$ and $\zeta \in {\rm Con}_{\BI }({\bf K})$, then ${\bf D}_2\oplus {\bf L}/\theta \oplus {\bf D}_2\cong _{\BI }({\bf D}_2\oplus {\bf L}\oplus {\bf D}_2)/(\Delta _{D_2}\oplus \theta \oplus \Delta _{D_2})$, ${\bf L}/\theta \cong _{\BI }({\bf D}_2\oplus {\bf L}\oplus {\bf D}_2)/(\nabla _{{\bf D}_2}\oplus \theta \oplus \nabla _{{\bf D}_2})$ and ${\bf D}_2\oplus {\bf K}/\zeta \oplus {\bf D}_2\cong _{\BZ }({\bf D}_2\oplus {\bf K}\oplus {\bf D}_2)/(\Delta _{D_2}\oplus \zeta \oplus \Delta _{D_2})$.\end{enumerate}\label{lclsop}\end{lemma}

\begin{proof} (\ref{lclsop1}) The map from $\displaystyle D_2\oplus (\prod _{i\in I}L_i)\oplus D_2$ to $\displaystyle \prod _{i\in I}(D_2\oplus L_i\oplus D_2)$, respectively $\displaystyle D_2\oplus (\prod _{i\in I}K_i)\oplus D_2$ to $\displaystyle \prod _{i\in I}(D_2\oplus K_i\oplus D_2)$, that preserves the $0$ and $1$ and restricts to the set inclusion on $\displaystyle \prod _{i\in I}L_i$, respectively $\displaystyle \prod _{i\in I}K_i$, is an embedding of BI--lattices, respectively BZ--lattices.

\noindent (\ref{lclsop2}) Clearly, the map from ${\bf D}_2\oplus {\bf M}\oplus {\bf D}_2$ to ${\bf D}_2\oplus {\bf L}\oplus {\bf D}_2$, respectively ${\bf D}_2\oplus {\bf N}\oplus {\bf D}_2$ to ${\bf D}_2\oplus {\bf K}\oplus {\bf D}_2$, that preserves the $0$ and $1$ and restricts to a BI--lattice embedding of ${\bf M}$ into ${\bf L}$, respectively of ${\bf N}$ into ${\bf K}$, is an embedding of BI--lattices, respectively BZ--lattices.

\noindent (\ref{lclsop2,5}) Clearly, ${\bf D}_2\in \S _{\BI }({\bf D}_2\oplus {\bf K}\oplus {\bf D}_2)$ and ${\bf D}_2\in \S _{\BZ }({\bf D}_2\oplus {\bf K}\oplus {\bf D}_2)$ and, by (\ref{lclsop2}), ${\bf D}_2\oplus \S _{\BI }({\bf K})\oplus {\bf D}_2\subseteq \S _{\BI }({\bf D}_2\oplus {\bf K}\oplus {\bf D}_2)$ and ${\bf D}_2\oplus \S _{\BI }({\bf K})\oplus {\bf D}_2\subseteq \S _{\BZ }({\bf D}_2\oplus {\bf K}\oplus {\bf D}_2)$. Now, if ${\bf A}\in \S _{\BI }({\bf D}_2\oplus {\bf K}\oplus {\bf D}_2)\setminus \I _{\BI }({\bf D}_2)$ or ${\bf A}\in \S _{\BZ }({\bf D}_2\oplus {\bf K}\oplus {\bf D}_2)\setminus \I _{\BZ }({\bf D}_2)$, then there exists an $a\in A\setminus \{0,1\}\subseteq K$, and for each such $a$ we have $a^{\prime }\in A\setminus \{0,1\}\subseteq K$ as well, so that $a\vee a^{\prime }=1^{\bf K}$ and $a\wedge a^{\prime }=0^{\bf K}$. Of course, for all $x,y\in A\setminus \{0,1\}\subseteq K$, we have $x\vee y,x\wedge y\in A\setminus \{0,1\}\subseteq K$. Hence $A\setminus \{0,1\}\in \S _{\BI }({\bf K})$, therefore ${\bf A}\in {\bf D}_2\oplus \S _{\BI }({\bf K})\oplus {\bf D}_2$.

\noindent (\ref{lclsop3}) Clear, with Lemma \ref{si}.(\ref{si1}) ensuring that $\Delta _{D_2}\oplus \zeta \oplus \Delta _{D_2}\in {\rm Con}_{\BZ }({\bf D}_2\oplus {\bf K}\oplus {\bf D}_2)$.\end{proof} 

\begin{lemma} Let $\C \subseteq \BI $ and $\D \subseteq \KL $. Then:\begin{enumerate}
\item\label{d2clsop1} ${\bf D}_2\oplus \P _{\BI }(\C )\oplus {\bf D}_2\subseteq \S _{\BI }\P _{\BI }({\bf D}_2\oplus \C \oplus {\bf D}_2)$ and ${\bf D}_2\oplus \P _{\BI }(\D )\oplus {\bf D}_2\subseteq \S _{\BZ }\P _{\BZ }({\bf D}_2\oplus \D \oplus {\bf D}_2)$;
\item\label{d2clsop1,5} if $\D \subseteq \OL $, then ${\bf D}_2\oplus \S _{\BI }(\D )\oplus {\bf D}_2=\S _{\BI }({\bf D}_2\oplus \D \oplus {\bf D}_2)\setminus \{{\bf D}_2\}$ and ${\bf D}_2\oplus \S _{\BI }(\D )\oplus {\bf D}_2=\S _{\BZ }({\bf D}_2\oplus \D \oplus {\bf D}_2)\setminus \{{\bf D}_2\}$;
\item\label{d2clsop2} ${\bf D}_2\oplus \S _{\BI }\P _{\BI }(\C )\oplus {\bf D}_2\subseteq \S _{\BI }\P _{\BI }({\bf D}_2\oplus \C \oplus {\bf D}_2)$ and ${\bf D}_2\oplus \S _{\BI }\P _{\BI }(\D )\oplus {\bf D}_2\subseteq \S _{\BZ }\P _{\BZ }({\bf D}_2\oplus \D \oplus {\bf D}_2)$;
\item\label{d2clsop4} $\H _{\BI }({\bf D}_2\oplus \C \oplus {\bf D}_2)=({\bf D}_2\oplus \H _{\BI }(\C )\oplus {\bf D}_2)\cup \H _{\BI }(\C )\cup \I _{\BZ }({\bf D}_2)$; if $\C \nsubseteq \T $, then $\H _{\BI }({\bf D}_2\oplus \C \oplus {\bf D}_2)=({\bf D}_2\oplus \H _{\BI }(\C )\oplus {\bf D}_2)\cup \H _{\BI }(\C )$; $\H _{\BZ }({\bf D}_2\oplus \D \oplus {\bf D}_2)=({\bf D}_2\oplus \H _{\BI }(\D )\oplus {\bf D}_2)\cup \I _{\BZ }(\{{\bf D}_1,{\bf D}_2\})$;
\item\label{d2clsop3} ${\bf D}_2\oplus \H _{\BI }(\C )\oplus {\bf D}_2\subseteq \H _{\BI }({\bf D}_2\oplus \C \oplus {\bf D}_2)\setminus \I _{\BI }(\{{\bf D}_1,{\bf D}_2\})$ and ${\bf D}_2\oplus \H _{\BI }(\D )\oplus {\bf D}_2=\H _{\BZ }({\bf D}_2\oplus \D \oplus {\bf D}_2)\setminus \I _{\BZ }(\{{\bf D}_1,{\bf D}_2\})$.
\end{enumerate}\label{d2clsop}\end{lemma}

\begin{proof} Using the fact that, for any \PBZ --lattice ${\bf L}$, ${\rm Con}_{\BZ }({\bf L})$ is a sublattice of ${\rm Con}_{\BI }({\bf L})$, we get: (\ref{d2clsop1}) by Lemma \ref{lclsop}.(\ref{lclsop1}), (\ref{d2clsop2}) by (\ref{d2clsop1}) and Lemma \ref{lclsop}.(\ref{lclsop2}) and (\ref{d2clsop4}) by Lemma \ref{lclsop}.(\ref{lclsop3}) and Lemma \ref{si}.(\ref{si1}). (\ref{d2clsop3}) follows from (\ref{d2clsop4}), while (\ref{d2clsop1,5}) follows from Lemma \ref{lclsop}.(\ref{lclsop2,5}).\end{proof} 

\begin{proposition} Let $\C \subseteq \BI $ and $\D \subseteq \KL $. Then:\begin{itemize}
\item ${\bf D}_2\oplus V_{\BI }(\C )\oplus {\bf D}_2\subsetneq V_{\BI }({\bf D}_2\oplus \C \oplus {\bf D}_2)=V_{\BI }({\bf D}_2\oplus V_{\BI }(\C )\oplus {\bf D}_2)$;
\item ${\bf D}_2\oplus V_{\BI }(\D )\oplus {\bf D}_2\subsetneq V_{\BZ }({\bf D}_2\oplus \D \oplus {\bf D}_2)=V_{\BZ }({\bf D}_2\oplus V_{\BI }(\D )\oplus {\bf D}_2)$.\end{itemize}\label{theopvar}\end{proposition}

\begin{proof} The inclusions follow from Lemma \ref{d2clsop} and their strictness from the clear fact that any bounded lattice--ordered algebra with the $0$ strictly meet--irreducible is directly irreducible. These inclusions also prove the nontrivial right--to--left inclusions from the equalities between varieties.\end{proof} 

Trivially, $V_{\BZ }({\bf D}_3)=V_{\BZ }({\bf D}_2\oplus {\bf D}_1\oplus {\bf D}_2)=V_{\BZ }({\bf D}_2\oplus V_{\BI }({\bf D}_1)\oplus {\bf D}_2)=V_{\BZ }({\bf D}_2\oplus \T \oplus {\bf D}_2)$.

\begin{proposition} For any subvariety $\V $ of $\SAOL $, we have: $\V \subseteq V_{\BZ }({\bf D}_2\oplus {\rm Si}(\V _{BI})\oplus {\bf D}_2)=V_{\BZ }({\bf D}_2\oplus {\rm Si}(\V )_{BI}\oplus {\bf D}_2)=V_{\BZ }({\bf D}_2\oplus \V _{BI}\oplus {\bf D}_2)$.\label{sisubvarsaol}\end{proposition}

\begin{proof} By Lemma \ref{si}.(\ref{si2}), Proposition \ref{theopvar} and Lemma \ref{varvsbired}, ${\rm Si}({\bf D}_2\oplus \V _{BI}\oplus {\bf D}_2)={\bf D}_2\oplus {\rm Si}(\V _{BI})\oplus {\bf D}_2$, hence $V_{\BZ }({\bf D}_2\oplus \V _{BI}\oplus {\bf D}_2)\supseteq V_{\BZ }({\rm Si}({\bf D}_2\oplus \V _{BI}\oplus {\bf D}_2))=V_{\BZ }({\bf D}_2\oplus {\rm Si}(\V _{BI})\oplus {\bf D}_2)=V_{\BZ }({\bf D}_2\oplus V_{\BI }({\rm Si}(\V _{BI}))\oplus {\bf D}_2)=V_{\BZ }({\bf D}_2\oplus V_{\BI }(\V _{BI})\oplus {\bf D}_2)\supseteq V_{\BZ }({\bf D}_2\oplus \V _{BI}\oplus {\bf D}_2)\supseteq \V $, hence $\V \subseteq V_{\BZ }({\bf D}_2\oplus {\rm Si}(\V _{BI})\oplus {\bf D}_2)=V_{\BZ }({\bf D}_2\oplus {\rm Si}(\V )_{BI}\oplus {\bf D}_2)=V_{\BZ }({\bf D}_2\oplus \V _{BI}\oplus {\bf D}_2)$.\end{proof}

\begin{corollary} For any subvariety $\V $ of $\SAOL $, we have: $\V \subseteq V_{\BZ }({\bf D}_2\oplus V_{\BI }({\rm Si}(\V _{BI}))\oplus {\bf D}_2)=V_{\BZ }({\bf D}_2\oplus V_{\BI }({\rm Si}(\V )_{BI})\oplus {\bf D}_2)=V_{\BZ }({\bf D}_2\oplus V_{\BI }(\V _{BI})\oplus {\bf D}_2)=V_{\BZ }({\bf D}_2\oplus \V _{BI}\oplus {\bf D}_2)$.\label{varsisubvarsaol}\end{corollary}

\begin{proof} By Propositions \ref{sisubvarsaol} and \ref{theopvar}.\end{proof}

\begin{remark} Note from Remark \ref{varred} that, for any subvariety $\V $ of $\BZ $, $\V _{BI}\subseteq V_{\BI }(\V _{BI})=\H _{\BI }\S _{\BI }(\V _{BI})$.\end{remark}

\begin{corollary} $\SAOL =V_{\BZ }({\bf D}_2\oplus {\rm Si}(\SAOL _{BI})\oplus {\bf D}_2)=V_{\BZ }({\bf D}_2\oplus {\rm Si}(\SAOL )_{BI}\oplus {\bf D}_2)=V_{\BZ }({\bf D}_2\oplus \SAOL _{BI}\oplus {\bf D}_2)=V_{\BZ }({\bf D}_2\oplus V_{\BI }({\rm Si}(\SAOL _{BI}))\oplus {\bf D}_2)=V_{\BZ }({\bf D}_2\oplus V_{\BI }({\rm Si}(\SAOL )_{BI})\oplus {\bf D}_2)=V_{\BZ }({\bf D}_2\oplus V_{\BI }(\SAOL _{BI})\oplus {\bf D}_2)$.\label{varsisaol}\end{corollary}

\begin{proof} By Theorem \ref{saol}, Proposition \ref{sisubvarsaol} and Corollary \ref{varsisubvarsaol}.\end{proof}

\begin{lemma} Let ${\bf L}$ be a BI--lattice. Then:\begin{enumerate}
\item\label{moreinclvar} ${\bf L}\in \H _{\BI }({\bf D}_2\oplus {\bf L}\oplus {\bf D}_2)\subseteq V_{\BI }({\bf D}_2\oplus {\bf L}\oplus {\bf D}_2)$, so $V_{\BI }({\bf L})\subseteq V_{\BI }({\bf D}_2\oplus {\bf L}\oplus {\bf D}_2)$;
\item\label{varsbis1} ${\bf D}_2\oplus {\bf L}\oplus {\bf D}_2\in \S _{\BI }({\bf D}_3\times {\bf L})\subseteq V_{\BI }({\bf D}_3\times {\bf L})$, so $V_{\BI }({\bf D}_2\oplus {\bf L}\oplus {\bf D}_2)\subseteq V_{\BI }({\bf D}_3\times {\bf L})$;
\item\label{varsbis2} if ${\bf D}_3\in V_{\BI }({\bf L})$, then $V_{\BI }({\bf L})=V_{\BI }({\bf D}_2\oplus {\bf L}\oplus {\bf D}_2)$;
\item\label{d3vsol} ${\bf D}_3\notin V_{\BI }({\bf L})$ iff ${\bf L}$ is an ortholattice.\end{enumerate}\label{moreincl}\end{lemma}

\begin{proof} (\ref{moreinclvar}) ${\bf L}\cong _{\BI }({\bf D}_2\oplus {\bf L}\oplus {\bf D}_2)/(\nabla _{D_2}\oplus \Delta _L\oplus \nabla _{D_2})\in \H _{\BI }({\bf D}_2\oplus {\bf L}\oplus {\bf D}_2)$, as already noticed in \cite[Theorem~$3.2$]{GLP}.

\noindent (\ref{varsbis1}) If we denote $D_3=\{0^{{\bf D}_3},c,1^{{\bf D}_3}\}$ and by ${\bf A}={\bf D}_2\oplus {\bf L}\oplus {\bf D}_2\in \BI $, then the map $\varphi :D_2\oplus L\oplus D_2\rightarrow D_3\times L$ defined by: $\varphi (0^{\bf A})=(0^{{\bf D}_3},0^{\bf L})$, $\varphi (1^{\bf A})=(1^{{\bf D}_3},1^{\bf L})$ and $\varphi (x)=(c,x)$ for all $x\in L=A\setminus \{0^{\bf A},1^{\bf A}\}$ is a BI--lattice embedding of ${\bf A}$ into ${\bf D}_3\times {\bf L}$, thus ${\bf A}\in \S _{\BI }({\bf D}_3\times {\bf L})$.

\noindent (\ref{varsbis2}) By (\ref{varsbis1}).

\noindent (\ref{d3vsol}) Since ${\bf D}_3\notin \OL $, we have the right--to--left implication.

If ${\bf D}_3\notin V_{\BI }({\bf L})$ and hence, by the fact that ${\bf D}_3\in \H _{\BI }({\bf D}_4)\subseteq V_{\BI }({\bf D}_4)$, it follows that also ${\bf D}_4\notin V_{\BI }({\bf L})$, then:

$\bullet $ ${\bf D}_3\notin \S _{\BI }({\bf L})$, so there exists no $x\in L$ with $x=x^{\prime }$,

$\bullet $ ${\bf D}_4\notin \S _{\BI }({\bf L})$, so there exists no $x\in L\setminus \{0\}$ with $x<x^{\prime }$,

\noindent hence there exists no $x\in L\setminus \{0\}$ with $x\leq x^{\prime }$. But, for every $u\in L$, we have $u\wedge u^{\prime }\leq u\vee u^{\prime }=(u\wedge u^{\prime })^{\prime }$. Therefore $u\wedge u^{\prime }=0$ for all $u\in L$, which means that ${\bf L}\in \OL $.\end{proof} 

\begin{lemma}{\rm \cite{kal}} $\KA =V_{\BI }({\bf D}_3)$.\label{kalman}\end{lemma}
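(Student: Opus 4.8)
For $V_{\BI }({\bf D}_3)\subseteq \KA $ there is nothing to do: ${\bf D}_3$ is a distributive pseudo--Kleene algebra and $\KA $ is a variety, hence closed under $\H _{\BI }$, $\S _{\BI }$, $\P _{\BI }$, so $V_{\BI }({\bf D}_3)=\H _{\BI }\S _{\BI }\P _{\BI }({\bf D}_3)\subseteq \KA $. For the reverse inclusion (which is the content of Kalman's theorem and which I would reproduce) the plan is to show that every Kleene algebra ${\bf L}$ embeds into a power of ${\bf D}_3$, so that ${\bf L}\in \S _{\BI }\P _{\BI }({\bf D}_3)\subseteq V_{\BI }({\bf D}_3)$. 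Since $x\neq y$ in ${\bf L}$ means $x\nleq y$ or $y\nleq x$, it suffices to produce, for each pair $a\nleq b$ in $L$, a BI--lattice homomorphism $h:{\bf L}\rightarrow {\bf D}_3$ with $h(a)\nleq h(b)$; the product of all such homomorphisms is then an embedding.

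The first ingredient is a description of the homomorphisms into ${\bf D}_3$. Writing $F^{\prime }=\{x^{\prime }:x\in F\}$ and $D_3=\{0,c,1\}$, I would check that whenever $F$ is a prime filter of the distributive lattice ${\bf L}$ containing no element of the form $z\wedge z^{\prime }$ --- equivalently, $F\cap F^{\prime }=\emptyset $ --- the map $h$ sending $x$ to $1$ if $x\in F$, to $0$ if $x\in F^{\prime }$ and to $c$ otherwise is a BI--lattice homomorphism onto a subalgebra of ${\bf D}_3$. The only non--routine points are that $h$ commutes with the involution (immediate, since $x\in F\Leftrightarrow x^{\prime }\in F^{\prime }$ and the middle class is self--involutive) and that the middle class $C=L\setminus (F\cup F^{\prime })$ is automatically a sublattice: if $x,y\in C$ then $x\wedge y\notin F$ because $x\notin F$, and if $x\wedge y\in F^{\prime }$ then $x^{\prime }\vee y^{\prime }\in F$, so $x^{\prime }\in F$ or $y^{\prime }\in F$ by primeness, contradicting $x,y\in C$; the argument for $\vee $ is dual.

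The second ingredient uses the Kleene identity. Let $N$ be the ideal of ${\bf L}$ generated by $\{z\wedge z^{\prime }:z\in L\}$ and $P$ the filter generated by $\{z\vee z^{\prime }:z\in L\}$; from $z\wedge z^{\prime }\leq w\vee w^{\prime }$ for all $z,w$ one gets that every element of $N$ lies below every element of $P$, and clearly $N^{\prime }=P$. For a prime filter $F$, the conditions $F\cap N=\emptyset $, $F\cap F^{\prime }=\emptyset $ and ``$F$ contains no $z\wedge z^{\prime }$'' are equivalent. Given $a\nleq b$, I claim that $a$ lies outside the ideal $\{x:x\leq b\vee n$ for some $n\in N\}$ or $b^{\prime }$ lies outside the ideal $\{x:x\leq a^{\prime }\vee n$ for some $n\in N\}$: if both failed we would have $a\leq b\vee n$ and $b^{\prime }\leq a^{\prime }\vee m$ for some $n,m\in N$, the second giving $b\geq a\wedge m^{\prime }$ with $m^{\prime }\in P$; as $n\leq m^{\prime }$ and ${\bf L}$ is distributive, $a=(a\wedge b)\vee (a\wedge n)\leq b\vee (a\wedge m^{\prime })\leq b$, a contradiction. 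In the first case the prime separation theorem for distributive lattices gives a prime filter $F\ni a$ disjoint from that ideal, so $b\notin F$ and $F\cap N=\emptyset $, and the associated $h$ has $h(a)=1$, $h(b)\in \{0,c\}$; in the second case one gets a prime filter $F\ni b^{\prime }$ with $a^{\prime }\notin F$ and $F\cap N=\emptyset $, and then $h(b)=h(b^{\prime })^{\prime }=0$, $h(a)\in \{c,1\}$. In both cases $h(a)\nleq h(b)$, which finishes the embedding, and hence the proof. (As a by--product, every Kleene algebra is a subdirect product of copies of ${\bf D}_1,{\bf D}_2,{\bf D}_3$, so these are, up to isomorphism, the only subdirectly irreducible Kleene algebras.)

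The main obstacle is the second ingredient: recognizing that the Kleene condition is exactly what makes every element of $N$ sit below every element of $P$, and then extracting from $a\nleq b$ a prime filter that simultaneously separates $a$ from $b$ and avoids every element $z\wedge z^{\prime }$. Once the dichotomy above is established, the rest is routine distributive--lattice bookkeeping, and the verification in the first ingredient that the candidate map is a homomorphism is straightforward.
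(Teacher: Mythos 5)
The paper offers no proof of this lemma: it is quoted directly from Kalman's 1958 paper \emph{Lattices with Involution}, so there is nothing in the source to compare your argument against line by line. That said, your proof is correct and self-contained. The easy inclusion $V_{\BI }({\bf D}_3)\subseteq \KA $ is handled properly, and the converse is a valid subdirect-representation argument: the verification that a prime filter $F$ with $F\cap F^{\prime }=\emptyset $ induces a BI--homomorphism onto a subalgebra of ${\bf D}_3$ is sound (the middle block is closed under both operations precisely by primeness of $F$ and of the prime ideal $F^{\prime }$), the equivalence of ``$F\cap N=\emptyset $'', ``$F\cap F^{\prime }=\emptyset $'' and ``$F$ meets no $z\wedge z^{\prime }$'' uses primeness correctly, and the dichotomy for $a\nleq b$ is where the Kleene identity genuinely enters, via $n\leq m^{\prime }$ for $n\in N$, $m^{\prime }\in P$ and the distributive computation $a=(a\wedge b)\vee (a\wedge n)\leq b$. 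The prime separation theorem applies because $I_1$, $I_2$ are ideals disjoint from the principal filters $[a)$, $[b^{\prime })$ respectively. Compared with the route the paper's own machinery would suggest (and which Kalman and most modern treatments follow), namely first classifying the subdirectly irreducible Kleene algebras as ${\bf D}_2$ and ${\bf D}_3$ and then invoking Birkhoff's subdirect representation theorem, your argument constructs the separating family of homomorphisms into ${\bf D}_3$ directly and recovers that classification as a by-product; the two approaches are essentially equivalent in content, yours being slightly more explicit about where the prime ideal theorem and the Kleene inequality are used. One cosmetic caveat: ${\bf D}_1$ is not a subalgebra of ${\bf D}_3$ in the bounded signature (subalgebras must contain $0\neq 1$), so the images of your homomorphisms are ${\bf D}_2$ or ${\bf D}_3$, and the trivial algebra should be excluded from your closing list of subdirectly irreducibles; this does not affect the proof.
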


\begin{proposition} Let $\V $ be a subvariety of $\BI $. Then:

$\bullet $ ${\bf D}_3\in \V $ iff $\KA \subseteq \V $;

$\bullet $ ${\bf D}_3\notin \V $ iff $\V \subseteq \OL $.

In particular, $(\OL ,\KA )$ is a splitting pair in the lattice of subvarieties of $\BI $, thus also in that of $\KL $.\label{olkasplit}\end{proposition}

\begin{proof} Lemma \ref{kalman} proves the first equivalence. In the second equivalence, the left--to--right implication is trivial and the converse follows from Lemma \ref{moreincl}.(\ref{d3vsol}). Clearly, $\KA \nsubseteq \OL $, hence the splitting pair property.\end{proof} 

For any $k,n,p\in \N $ and any pair $(t,u)$, where $t(x_1,\ldots ,x_k,z_1,\ldots ,z_p)$ and $u(y_1,\ldots ,y_n,z_1,\ldots ,z_p)$ are terms in the language of $\BI $ having the arities $k+p$, respectively $n+p$ and $p$ common variables $z_1,\ldots ,z_p$, we consider the $(k+n)$--ary term $m(t,u)$ in the language of $\BZ $, defined as follows:\vspace*{-7pt}$$m(t,u)(x_1,\ldots ,x_k,y_1,\ldots ,y_n,z_1,\ldots ,z_p)=$$$$\bigvee _{i=1}^k(x_i\wedge x_i^{\prime })^{\sim }\vee \bigvee _{j=1}^n(y_j\wedge y_j^{\prime })^{\sim }\vee \bigvee _{h=1}^p(z_h\wedge z_h^{\prime })^{\sim }\vee t(x_1,\ldots ,x_k,z_1,\ldots ,z_p).$$\vspace*{-7pt}Note that:$$m(u,t)(x_1,\ldots ,x_k,y_1,\ldots ,y_n,z_1,\ldots ,z_p)=$$$$\bigvee _{i=1}^k(x_i\wedge x_i^{\prime })^{\sim }\vee \bigvee _{j=1}^n(y_j\wedge y_j^{\prime })^{\sim }\vee \bigvee _{h=1}^p(z_h\wedge z_h^{\prime })^{\sim }\vee u(y_1,\ldots ,y_n,z_1,\ldots ,z_p).$$

\begin{remark} If ${\bf L}\in \BI $ is such that ${\bf D}_3\notin V_{\BI }({\bf L})$, then ${\bf L}\in \OL $ by Lemma \ref{moreincl}.(\ref{d3vsol}), so that ${\bf L}\vDash x\wedge x^{\prime }\approx 0$ and ${\bf L}\vDash x\vee x^{\prime }\approx 1$, therefore, for any terms $t$ and $u$ in the language of $\BI $, there exist terms $r$ and $s$ in the language of $\BI $ having nonzero arities such that: ${\bf L}\vDash t\approx u$ iff ${\bf L}\vDash r\approx s$.\label{nonnullary}\end{remark}
 
\begin{lemma} Let ${\bf L}\in \BI $ and $t$ and $u$ be terms in the language of $\BI $. Then:\begin{itemize}
\item if ${\bf D}_3\in V_{\BI }({\bf L})$, then: ${\bf L}\vDash t\approx u$ iff ${\bf D}_2\oplus {\bf L}\oplus {\bf D}_2\vDash t\approx u$;
\item if $t$ and $u$ have nonzero arities and ${\bf L}\in \KL $, so that ${\bf D}_2\oplus {\bf L}\oplus {\bf D}_2\in \AOL $, then: ${\bf L}\vDash t\approx u$ iff ${\bf D}_2\oplus {\bf L}\oplus {\bf D}_2\vDash m(t,u)\approx m(u,t)$.\end{itemize}\label{eqthrclsop}\end{lemma}

\begin{proof} Lemma \ref{moreincl}.(\ref{varsbis2}) implies the first equivalence.

Now let us assume that ${\bf L}\in \KL $ and denote by ${\bf A}={\bf D}_2\oplus {\bf L}\oplus {\bf D}_2\in \AOL $. Let $k,n,p$ be as in the notation above, and assume that $k+p,n+p\in \N ^*$. Then, for any $a_1,\ldots ,a_k,b_1,\ldots ,b_n,c_1,\ldots ,c_p\in A$, we have, in ${\bf A}$:

\noindent $\bullet \ $ if $a_1,\ldots ,a_k,b_1,\ldots ,b_n,c_1,\ldots ,c_p\in L$, then $m(t,u)^{\bf A}(a_1,\ldots ,a_k,b_1,\ldots ,b_n,c_1,\ldots ,c_p)=t^{\bf L}(a_1,\ldots ,a_k,c_1,\ldots ,c_p)$ and $m(u,t)^{\bf A}(a_1,\ldots ,a_k,b_1,\ldots ,b_n,c_1,\ldots ,c_p)=u^{\bf L}(b_1,\ldots ,b_n,c_1,\ldots ,c_p)$;

\noindent $\bullet \ $ if at least one of the elements $a_1,\ldots ,a_k,b_1,\ldots ,b_n,c_1,\ldots ,c_p$ belongs to $A\setminus L=\{0^{\bf A},1^{\bf A}\}$, then $m(t,u)^{\bf A}(a_1,\ldots ,\linebreak a_k,b_1,\ldots ,b_n,c_1,\ldots ,c_p)=m(u,t)^{\bf A}(a_1,\ldots ,a_k,b_1,\ldots ,b_n,c_1,\ldots ,c_p)=1^{\bf A}$.

Therefore $m(t,u)^{\bf A}(a_1,\ldots ,a_k,b_1,\ldots ,b_n,c_1,\ldots ,c_p)=m(u,t)^{\bf A}(a_1,\ldots ,a_k,b_1,\ldots ,b_n,c_1,\ldots ,c_p)$ for all $a_1,\ldots ,\linebreak
a_k,b_1,\ldots ,b_n,c_1,\ldots ,c_p\in A$ iff $t^{\bf L}(a_1,\ldots ,a_k,c_1,\ldots ,c_p)=u^{\bf L}(b_1,\ldots ,b_n,c_1,\ldots ,c_p)$ for all $a_1,\ldots ,a_k,b_1,\ldots ,b_n,\linebreak c_1,\ldots ,c_p\in L$, that is ${\bf A}\vDash m(t,u)\approx m(u,t)$ iff ${\bf L}\vDash t\approx u$.\end{proof} 

\begin{proposition} For any subclass $\C \subseteq \BI $, we have:\begin{itemize}
\item ${\bf D}_3\in V_{\BI }(\C )$ iff $\KA \subseteq V_{\BI }(\C )$ iff $V_{\BI }(\C )=V _{\BI }({\bf D}_2\oplus \C \oplus {\bf D}_2)$;
\item ${\bf D}_3\notin V_{\BI }(\C )$ iff $V_{\BI }(\C )\subseteq \OL $ iff $V_{\BI }(\C )\subsetneq V _{\BI }({\bf D}_2\oplus \C \oplus {\bf D}_2)$.\end{itemize}\label{moreolka}\end{proposition}

\begin{proof} By Lemma \ref{moreincl}.(\ref{moreinclvar}), $\C \subseteq \H _{\BI }({\bf D}_2\oplus \C \oplus {\bf D}_2)\subseteq V_{\BI }({\bf D}_2\oplus \C \oplus {\bf D}_2)$, hence $V_{\BI }(\C )\subseteq V_{\BI }({\bf D}_2\oplus \C \oplus {\bf D}_2)$. ${\bf D}_3={\bf D}_2\oplus {\bf D}_1\oplus {\bf D}_2\in {\bf D}_2\oplus \T \oplus {\bf D}_2\subseteq {\bf D}_2\oplus \V \oplus {\bf D}_2\subset V_{\BI }({\bf D}_2\oplus \C \oplus {\bf D}_2)$, thus, if $V_{\BI }(\C )=V _{\BI }({\bf D}_2\oplus \C \oplus {\bf D}_2)$, then ${\bf D}_3\in V_{\BI }(\C )$. By Lemma \ref{eqthrclsop}, if ${\bf D}_3\in V_{\BI }(\C )$, then the relative axiomatization w.r.t. $\BI $ of $V_{\BI }(\C )$ coincides to that of $V _{\BI }({\bf D}_2\oplus \C \oplus {\bf D}_2)$, hence $V_{\BI }(\C )=V _{\BI }({\bf D}_2\oplus \C \oplus {\bf D}_2)$. Now apply  Proposition \ref{olkasplit}.\end{proof} 

\begin{corollary} For any subvariety $\V $ of $\BI $, we have:\begin{enumerate}
\item\label{andmoreolka1} ${\bf D}_3\in \V $ iff $\V =\H _{\BI }({\bf D}_2\oplus \V \oplus {\bf D}_2)$ iff $\H _{\BI }({\bf D}_2\oplus \V \oplus {\bf D}_2)$ is a variety;
\item\label{andmoreolka2} ${\bf D}_3\notin \V $ iff $\V \subsetneq \H _{\BI }({\bf D}_2\oplus \V \oplus {\bf D}_2)$ iff $\H _{\BI }({\bf D}_2\oplus \V \oplus {\bf D}_2)$ is not closed w.r.t. direct products iff $\H _{\BI }({\bf D}_2\oplus \V \oplus {\bf D}_2)\subsetneq V_{\BI }({\bf D}_2\oplus \V \oplus {\bf D}_2)$.\end{enumerate}\label{andmoreolka}\end{corollary}

\begin{proof} Proposition \ref{moreolka} and Lemma \ref{moreincl}.(\ref{moreinclvar}), which shows that $\V \subseteq \H _{\BI }({\bf D}_2\oplus \V \oplus {\bf D}_2)\subseteq V_{\BI }({\bf D}_2\oplus \V \oplus {\bf D}_2)$, prove the first equivalence  in each of (\ref{andmoreolka1}) and (\ref{andmoreolka2}), along with the right-to-left implication in the second equivalence from (\ref{andmoreolka1}). By Lemma \ref{d2clsop}.(\ref{d2clsop4}), $\H _{\BI }({\bf D}_2\oplus \V \oplus {\bf D}_2)=({\bf D}_2\oplus \V \oplus {\bf D}_2)\cup \V $. We have ${\bf D}_3\in {\bf D}_2\oplus \V \oplus {\bf D}_2\subseteq \H _{\BI }({\bf D}_2\oplus \V \oplus {\bf D}_2)$ and, for any ${\bf L}\in \V \setminus \I _{\BI }(\{{\bf D}_1,{\bf D}_2\})\subseteq \H _{\BI }({\bf D}_2\oplus \V \oplus {\bf D}_2)$, ${\bf D}_3\times {\bf L}\notin \V $ since ${\bf D}_3\notin \V $, and ${\bf D}_3\times {\bf L}\notin {\bf D}_2\oplus \V \oplus {\bf D}_2$ since ${\bf D}_3\times {\bf L}$ is directly reducible, thus ${\bf D}_3\times {\bf L}\notin \H _{\BI }({\bf D}_2\oplus \V \oplus {\bf D}_2)$. Hence the rest of the implications.\end{proof} 

\begin{proposition} For any subvariety $\V $ of $\SAOL $ such that ${\bf D}_3\in \V $, if $\D =\{{\bf K}\in \KL :{\bf D}_2\oplus {\bf K}\oplus {\bf D}_2\in \V \}$, then:\begin{enumerate}
\item\label{surjop1} ${\rm Si}(\V )=\I _{\BZ }(\{{\bf D}_1,{\bf D}_2\})\cup {\rm Si}(\S _{\BZ }({\bf D}_2\oplus \D \oplus {\bf D}_2))$;
\item\label{surjop2} if $\D \subseteq \OL $, then ${\rm Si}(\V )=\I _{\BZ }(\{{\bf D}_1,{\bf D}_2\})\cup ({\bf D}_2\oplus {\rm Si}(\S _{\BI }(\D ))\oplus {\bf D}_2)$;
\item\label{surjop3} $\D $ is a subvariety of $\KL $ and $\V =V_{\BZ }({\bf D}_2\oplus \D \oplus {\bf D}_2)=V_{\BZ }(({\bf D}_2\oplus \KL \oplus {\bf D}_2)\cap \V )$.\end{enumerate}\label{surjop}\end{proposition}

\begin{proof} (\ref{surjop1}) If $\D $ is as in the enunciation, then $\D \supseteq \T $ and ${\bf D}_2\oplus \D \oplus {\bf D}_2=({\bf D}_2\oplus \KL \oplus {\bf D}_2)\cap \V $, hence the equality in the enunciation by Lemma \ref{sivsubsaol}.

\noindent (\ref{surjop2}) By (\ref{surjop1}), using Lemma \ref{d2clsop}.(\ref{d2clsop1,5}) and Lemma \ref{si}.(\ref{si2}). 

\noindent (\ref{surjop3}) From (\ref{surjop1}), keeping in mind that ${\bf D}_2\oplus \D \oplus {\bf D}_2\subseteq \V $, we obtain that $\V =V_{\BZ }({\rm Si}(\V ))=V_{\BZ }({\rm Si}(\S _{\BZ }({\bf D}_2\oplus \D \oplus {\bf D}_2)))=V_{\BZ }(\S _{\BZ }({\bf D}_2\oplus \D \oplus {\bf D}_2))=V_{\BZ }({\bf D}_2\oplus \D \oplus {\bf D}_2)=V_{\BZ }(({\bf D}_2\oplus \KL \oplus {\bf D}_2)\cap \V )$. By Proposition \ref{theopvar}, ${\bf D}_2\oplus V_{\BI }(\D )\oplus {\bf D}_2\subset V_{\BZ }({\bf D}_2\oplus \D \oplus {\bf D}_2)=\V $, thus ${\bf D}_2\oplus V_{\BI }(\D )\oplus {\bf D}_2\subseteq ({\bf D}_2\oplus \KL \oplus {\bf D}_2)\cap \V ={\bf D}_2\oplus \D \oplus {\bf D}_2$, hence $V_{\BI }(\D )\subseteq \D $, so $V_{\BI }(\D )=\D $.\end{proof}

\begin{theorem} The operator $\V \mapsto V_{\BZ }({\bf D}_2\oplus \V \oplus {\bf D}_2)$ from the lattice of subvarieties of $\KL $ to the principal filter generated by $V_{\BZ }({\bf D}_3)$ in the lattice of subvarieties of $\SAOL $ is a lattice isomorphism whose inverse is defined by $\W \mapsto \{{\bf K}\in \KL :{\bf D}_2\oplus {\bf K}\oplus {\bf D}_2\in \W \}$.\label{thegenop}\end{theorem}

\begin{proof} Let $\nu :\Lambda (\KL )\rightarrow [V_{\BZ }({\bf D}_3),\SAOL ]_{\Lambda (\PBZs )}$ be the map in the enunciation: $\nu (\V )=V_{\BZ }({\bf D}_2\oplus \V \oplus {\bf D}_2)$ for all $\V \in \Lambda (\KL )$. Recall that $\nu (\T )=V_{\BZ }({\bf D}_3)$ and, by Theorem \ref{saol}, $\nu (\KL )=\SAOL $. 

Clearly, for any subclasses $\C $ and $\D $ of $\BI $, we have: $\C \subseteq \D $ iff ${\bf D}_2\oplus \C \oplus {\bf D}_2\subseteq {\bf D}_2\oplus \D \oplus {\bf D}_2$, which implies $V_{\BZ }({\bf D}_2\oplus \C \oplus {\bf D}_2)\subseteq V_{\BZ }({\bf D}_2\oplus \D \oplus {\bf D}_2)$, so $\nu $ is order--preserving.

Now let $\V $ and $\W $ be subvarieties of $\KL $ such that $\V \neq \W $. Then $\V \nsubseteq \W $ or $\W \nsubseteq \V $. W.l.g. we may assume that $\W \nsubseteq \V $, so that, for some terms $t$, $u$ in the language of BI--lattices, $\V \vDash t\approx u$, but $\W \nvDash t\approx u$, hence ${\bf L}\nvDash t\approx u$ for some ${\bf L}\in \W $. By Proposition \ref{olkasplit}, Remark \ref{nonnullary} and Lemma \ref{eqthrclsop}, if $\V \subseteq \OL $, then $t$ and $u$ can be chosen to have nonzero arities and hence $\nu (\V )=V_{\BZ }({\bf D}_2\oplus \V \oplus {\bf D}_2)\vDash m(t,u)\approx m(u,t)$ and $\nu (\W )=V_{\BZ }({\bf D}_2\oplus \W \oplus {\bf D}_2)\nvDash m(t,u)\approx m(u,t)$, therefore $\nu (\W )\nsubseteq \nu (\V )$. By Proposition \ref{olkasplit}, Lemma \ref{eqthrclsop} and Lemma \ref{moreincl}.(\ref{varsbis2}), if $\KA \subseteq \V $, then $\nu (\V )\vDash t\approx u$ and, since ${\bf L}\in \H _{\BI }({\bf D}_2\oplus {\bf L}\oplus {\bf D}_2)$, we have ${\bf D}_2\oplus {\bf L}\oplus {\bf D}_2\nvDash t\approx u$, thus ${\bf D}_2\oplus {\bf L}\oplus {\bf D}_2\in \nu (\W )\setminus \nu (\V )$. By Proposition \ref{olkasplit}, it follows that, whenever $\W \nsubseteq \V $, we have $\nu (\W )\nsubseteq \nu (\V )$, in particular $\nu (\W )\neq \nu (\V )$, hence $\nu $ is injective.

So $\nu $ is a poset embedding and it preserves non--inclusion, hence it also reflects order. By Proposition \ref{surjop}.(\ref{surjop3}), $\nu $ is also surjective, thus it is a lattice isomorphism, and its inverse maps each $\W \in [V_{\BZ }({\bf D}_3),\SAOL ]_{\Lambda (\PBZs )}$ to the variety $\nu ^{-1}(\W )=\{{\bf K}\in \KL :{\bf D}_2\oplus {\bf K}\oplus {\bf D}_2\in \W \}\in \Lambda (\KL )$.\end{proof}

\begin{corollary} $\Lambda (\SAOL )\cong {\bf D}_3\oplus \Lambda (\KL )$.\end{corollary}

\begin{proof} By Theorem \ref{thegenop} and the fact that the subvarieties of $\SAOL $ are $\T $, $\BA $ and those including $V_{\BZ }({\bf D}_3)$.\end{proof}

\begin{corollary} Let $(\V _j)_{j\in J}$ be a nonempty family of subvarieties of $\KL $ and $(\D _k)_{k\in K}$ be a nonempty family of subclasses of $\KL $. Then:\begin{enumerate}
\item\label{oponarbmj1} $\displaystyle V_{\BZ }({\bf D}_2\oplus (\bigcap _{j\in J}\V _j)\oplus {\bf D}_2)=\bigcap _{j\in J}V_{\BZ }({\bf D}_2\oplus \V _j\oplus {\bf D}_2)$;
\item\label{oponarbmj2} $\displaystyle V_{\BZ }({\bf D}_2\oplus (\bigvee _{j\in J}\V _j)\oplus {\bf D}_2)=\bigvee _{j\in J}V_{\BZ }({\bf D}_2\oplus \V _j\oplus {\bf D}_2)$;
\item\label{oponarbmj3} $\displaystyle V_{\BZ }({\bf D}_2\oplus (\bigcup _{k\in K}\D _k)\oplus {\bf D}_2)=\bigvee _{k\in K}V_{\BZ }({\bf D}_2\oplus \D _k\oplus {\bf D}_2)$.\end{enumerate}\label{oponarbmj}\end{corollary}

\begin{proof} (\ref{oponarbmj1}),(\ref{oponarbmj2}) By Theorem \ref{thegenop}. (\ref{oponarbmj3}) By (\ref{oponarbmj2}) and Proposition \ref{theopvar}.\end{proof}

\begin{corollary} $V_{\BI }({\rm Si}(\SAOL _{BI}))=V_{\BI }({\rm Si}(\SAOL )_{BI})=\V_{\BI }(\SAOL _{BI})=\KL $, thus any subvariety $\V $ of $\PBZs $ that includes $\SAOL $ satisfies $V_{\BI }({\rm Si}(\V _{BI}))=V_{\BI }({\rm Si}(\V )_{BI})=\V_{\BI }(\V _{BI})=\KL $, in particular\linebreak $V_{\BI }({\rm Si}(\PBZs _{BI}))=V_{\BI }({\rm Si}(\PBZs )_{BI})=\V_{\BI }(\PBZs _{BI})=\V_{\BI }(\POML )=\KL $.
\label{varredsaol}\end{corollary}

\begin{proof} By Corollary \ref{varsisaol} and Theorem \ref{thegenop}.\end{proof}

\begin{corollary} For any subvariety $\V $ of $\PBZs $ that includes $\SAOL $, $\V _{BI}$ is not a subvariety of $\KL $. In particular, $\SAOL _{BI}$ and $\PBZs _{BI}$ are not varieties.\label{birednotvar}\end{corollary}

Note that the Brouwer complement of any \PBZ --lattice is unique \cite{pbz6}, in other words any pa\-ra\-or\-tho\-mo\-du\-lar pseudo--Kleene algebra $(L,\wedge ,\vee ,\cdot ^{\prime },0,1)$ can be endowed with at most one unary operation $\cdot ^{\sim }$ such that $(L,\wedge ,\vee ,\cdot ^{\prime },\cdot ^{\sim },0,1)$ is a \PBZ --lattice, hence:

\begin{remark} For any \PBZ --lattices ${\bf A}$, ${\bf B}$ and any subclasses $\C $, $\D $ of $\PBZs $, we have: ${\bf A}={\bf B}$ iff ${\bf A}_{bi}={\bf B}_{bi}$, thus ${\bf A}\in \C $ iff ${\bf A}_{bi}\in \C _{BI}$, thus $\C \subseteq \D $ iff $\C _{BI}\subseteq \D _{BI}$ and $\C =\D $ iff $\C _{BI}=\D _{BI}$.

Thus, for instance, $\SAOL _{BI}\subsetneq \PBZs _{BI}$.\end{remark}

While any pseudo--Kleene algebra endowed with the trivial Brouwer complement becomes a BZ--lattice and, in particular, any paraorthomodular pseudo--Kleene algebra endowed with the trivial Brouwer complement becomes a paraorthomodular BZ--lattice, so $\BZ _{BI}=\KL $ and the BI--lattice reducts of the paraorthomodular BZ--lattices form the entire quasivariety $\POML $, we have:

\begin{remark} $\PBZs _{BI}\subsetneq \POML $.

Indeed, for instance, the lattice ${\bf D}_3^2$ can be endowed with two involutions, both of which are Kleene complements; out of these Kleene algebras (which are distributive, thus modular, thus paraorthomodular), the direct product of the three--element Kleene chain with itself is the BI--lattice reduct of the direct product of the three--element antiortholattice chain with itself, while the other is the paraorthomodular Kleene algebra is the following leftmost diagram, which can not be endowed with any Brouwer complement $\cdot ^{\sim }$ that would make it a \PBZ --lattice, since then the element $a$ would be sharp, thus $b\leq a$ would imply $a^{\prime }=a^{\sim }\leq b^{\sim }\leq b^{\prime }$, thus $b^{\sim }\in [a^{\prime })\cap (b^{\prime }]=\emptyset $, a contradiction, hence this BI--lattice belongs to $\KA \setminus \DIST _{BI}$, which also shows that $\DIST _{BI}$ is strictly included in the class of paraorthomodular Kleene algebras.

\begin{center}\begin{tabular}{cc}\begin{picture}(30,63)(0,0)
\put(15,0){\circle*{3}}
\put(15,60){\circle*{3}}
\put(15,30){\circle*{3}}
\put(0,15){\circle*{3}}
\put(0,45){\circle*{3}}
\put(30,15){\circle*{3}}
\put(30,45){\circle*{3}}
\put(-15,30){\circle*{3}}
\put(45,30){\circle*{3}}
\put(13,-9){$0$}
\put(13,63){$1$}
\put(18,27){$c\!=\!c^{\prime }$}
\put(-22,28){$a$}
\put(-6,44){$b^{\prime }$}
\put(32,12){$d$}
\put(-6,10){$b$}
\put(47,27){$a^{\prime}$}
\put(32,44){$d^{\prime}$}
\put(15,0){\line(-1,1){30}}
\put(15,0){\line(1,1){30}}
\put(15,60){\line(-1,-1){30}}
\put(15,60){\line(1,-1){30}}
\put(0,15){\line(1,1){30}}
\put(30,15){\line(-1,1){30}}
\end{picture}
&\hspace*{50pt}
\begin{picture}(40,63)(0,0)
\put(20,0){\circle*{3}}
\put(20,40){\circle*{3}}
\put(0,20){\circle*{3}}
\put(60,40){\circle*{3}}
\put(20,20){\circle*{3}}
\put(40,40){\circle*{3}}
\put(40,20){\circle*{3}}
\put(40,60){\circle*{3}}
\put(18,-9){$0$}
\put(38,63){$1$}
\put(63,38){$u^{\prime}$}
\put(16,42){$v^{\prime}$}
\put(42,14){$v$}
\put(-8,17){$u$}
\put(11,17){$w$}
\put(43,38){$w^{\prime}$}
\put(20,0){\line(-1,1){20}}
\put(20,0){\line(1,1){40}}
\put(40,60){\line(1,-1){20}}
\put(20,0){\line(0,1){40}}
\put(40,20){\line(0,1){40}}
\put(0,20){\line(1,1){40}}
\put(40,20){\line(-1,1){20}}
\end{picture}\end{tabular}\end{center}

We can even find lattices that can be organized as paraorthomodular pseudo--Kleene algebras, but not as \PBZ --lattices. Indeed, the modular lattice in the rightmost diagram above has two involutions, and the unique involution above up to a BI--lattice isomorphism. This involution makes it a modular, thus paraorthomodular pseudo--Kleene algebra whose set of sharp elements is $\{0,u,u^{\prime },w,w^{\prime },1\}$, which is not even a sublattice of this lattice, hence we have no Brouwer complement w.r.t. which this would be the underlying set of a subalgebra of the resulting BZ--lattice. Therefore the lattice in this rightmost diagram is not the bounded lattice reduct of any \PBZ --lattice, so it does not belong to $\MOD _L$ or $\PBZs _L$, nor does this modular pseudo--Kleene algebra belong to $\MOD _{BI}$ or $\PBZs _{BI}$, which also shows that $\MOD _{BI}$ is strictly included in the variety of modular pseudo--Kleene algebras.\label{ppkasnotredpbzl}\end{remark}

\begin{theorem}\begin{itemize}
\item $V_{\BZ }({\bf D}_4)=V_{\BZ }({\bf D}_2\oplus \BA \oplus {\bf D}_2)$.
\item $V_{\BZ }({\bf D}_5)=V_{\BZ }({\bf D}_2\oplus \KA \oplus {\bf D}_2)=\SDM \cap \DIST =\SAOL \cap \DIST $, so $V_{\BZ }({\bf D}_5)$ contains all antiortholattice chains.\end{itemize}\label{vd4d5}\end{theorem}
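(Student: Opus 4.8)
The plan is to deduce both identities from the machinery already assembled for the operator $\C \mapsto V_{\BZ }({\bf D}_2\oplus \C \oplus {\bf D}_2)$, chiefly Corollary \ref{thegenopvar} and Lemma \ref{varvsbired}, together with Kalman's Theorem (Lemma \ref{kalman}) and the classical fact that $\BA =V_{\BI }({\bf D}_2)$.

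For the first bullet, I would note that, as an antiortholattice in the sense of Section \ref{algvar}, ${\bf D}_2\oplus {\bf D}_2\oplus {\bf D}_2$ is (isomorphic to) the four--element antiortholattice chain ${\bf D}_4$, the middle factor being the two--element Boolean algebra, so that ${\bf D}_4\in {\bf D}_2\oplus \BA \oplus {\bf D}_2$ and hence $V_{\BZ }({\bf D}_4)\subseteq V_{\BZ }({\bf D}_2\oplus \BA \oplus {\bf D}_2)$. Conversely, since $\BA =V_{\BI }({\bf D}_2)$, Corollary \ref{thegenopvar} gives $V_{\BZ }({\bf D}_2\oplus \BA \oplus {\bf D}_2)=V_{\BZ }({\bf D}_2\oplus V_{\BI }({\bf D}_2)\oplus {\bf D}_2)=V_{\BZ }({\bf D}_2\oplus {\bf D}_2\oplus {\bf D}_2)=V_{\BZ }({\bf D}_4)$, which settles the first bullet.

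For the second bullet, the equality $V_{\BZ }({\bf D}_5)=V_{\BZ }({\bf D}_2\oplus \KA \oplus {\bf D}_2)$ is obtained in exactly the same way, now using that ${\bf D}_5={\bf D}_2\oplus {\bf D}_3\oplus {\bf D}_2$ with the middle factor the three--element Kleene chain and that $\KA =V_{\BI }({\bf D}_3)$ by Lemma \ref{kalman}, so that Corollary \ref{thegenopvar} yields $V_{\BZ }({\bf D}_2\oplus \KA \oplus {\bf D}_2)=V_{\BZ }({\bf D}_2\oplus {\bf D}_3\oplus {\bf D}_2)=V_{\BZ }({\bf D}_5)$. Next I would establish $V_{\BZ }({\bf D}_2\oplus \KA \oplus {\bf D}_2)=\SAOL \cap \DIST $ by two inclusions. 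From left to right: for every ${\bf K}\in \KA $, the antiortholattice ${\bf D}_2\oplus {\bf K}\oplus {\bf D}_2$ has $0$ strictly meet--irreducible, hence it satisfies \sdm\ and, being an antiortholattice, lies in $V_{\BZ }(\AOL )$, so it belongs to $\SAOL $; moreover its lattice reduct is an ordinal sum of distributive lattices, hence distributive, so it belongs to $\DIST $ as well; thus ${\bf D}_2\oplus \KA \oplus {\bf D}_2\subseteq \SAOL \cap \DIST $ and therefore $V_{\BZ }({\bf D}_2\oplus \KA \oplus {\bf D}_2)\subseteq \SAOL \cap \DIST $. From right to left: setting $\V =\SAOL \cap \DIST $, a subvariety of $\SAOL $, Lemma \ref{varvsbired} gives $\V \subseteq V_{\BZ }({\bf D}_2\oplus \V _{BI}\oplus {\bf D}_2)$, and since the BI--reduct of any distributive \PBZ --lattice is a distributive pseudo--Kleene algebra, i.e.\ a Kleene algebra, we have $\V _{BI}\subseteq \KA $, whence, by monotonicity of the operator, $\V \subseteq V_{\BZ }({\bf D}_2\oplus \V _{BI}\oplus {\bf D}_2)\subseteq V_{\BZ }({\bf D}_2\oplus \KA \oplus {\bf D}_2)$. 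Combining the two inclusions with the preceding equality, $V_{\BZ }({\bf D}_5)=V_{\BZ }({\bf D}_2\oplus \KA \oplus {\bf D}_2)=\SAOL \cap \DIST $. Finally $\SDM \cap \DIST =\SAOL \cap \DIST $, since $\DIST \subseteq V_{\BZ }(\AOL )$ forces $\SDM \cap \DIST =\SDM \cap V_{\BZ }(\AOL )\cap \DIST =\SAOL \cap \DIST $; and every antiortholattice chain ${\bf D}_n$ is distributive with $0$ meet--irreducible, hence ${\bf D}_n\in \SAOL \cap \DIST =V_{\BZ }({\bf D}_5)$, which is the concluding assertion.

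Since the substantial work, on the behaviour of the operator $\C \mapsto V_{\BZ }({\bf D}_2\oplus \C \oplus {\bf D}_2)$, on Kalman's Theorem, and on the congruences of ordinal sums, is already done, I do not anticipate a genuine obstacle; the only points that require a little care are the combinatorial identifications of ${\bf D}_4$ and ${\bf D}_5$ as ordinal sums at the level of \PBZ --lattices (checking that the Kleene and trivial Brouwer complements match), and the observation that $\V _{BI}\subseteq \KA $ for $\V =\SAOL \cap \DIST $, i.e.\ that distributivity of a \PBZ --lattice is inherited by its pseudo--Kleene reduct as a Kleene--algebra identity.
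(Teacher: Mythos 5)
Your proposal is correct and follows essentially the same route as the paper: both bullets are obtained from Corollary \ref{thegenopvar} together with $\BA =V_{\BI }({\bf D}_2)$ and Kalman's Theorem, and the identification $\SAOL \cap \DIST =V_{\BZ }({\bf D}_2\oplus \KA \oplus {\bf D}_2)$ rests on the same mechanism (your appeal to Lemma \ref{varvsbired} with $\V _{BI}\subseteq \KA $ is just the packaged form of the paper's direct use of Remarks \ref{gensaol} and \ref{embedaols} on the subdirectly irreducible members). The paper likewise derives $\SDM \cap \DIST =\SAOL \cap \DIST $ from $\DIST \subseteq V_{\BZ }(\AOL )$, so there is nothing to correct.
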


\begin{proof}  By Proposition \ref{theopvar}, $V_{\BZ }({\bf D}_4)=V_{\BZ }({\bf D}_2\oplus {\bf D}_2\oplus {\bf D}_2)=V_{\BZ }({\bf D}_2\oplus V_{\BI }({\bf D}_2)\oplus {\bf D}_2)=V_{\BZ }({\bf D}_2\oplus \BA \oplus {\bf D}_2)$, while $V_{\BZ }({\bf D}_5)=V_{\BZ }({\bf D}_2\oplus {\bf D}_3\oplus {\bf D}_2)=V_{\BZ }({\bf D}_2\oplus V_{\BI }({\bf D}_3)\oplus {\bf D}_2)=V_{\BZ }({\bf D}_2\oplus \KA \oplus {\bf D}_2)$ according to Lemma \ref{kalman}.

By Lemma \ref{embedaols} and the observation above it, the subdirectly irreducible members of $\SAOL \cap \DIST $ are distributive antiortholattices that satisfy $\sdm $, that is distributive antiortholattices with the $0$ meet--irreducible, and any such antiortholattice ${\bf L}$ is a subalgebra of the distributive antiortholattice ${\bf D}_2\oplus {\bf L}\oplus {\bf D}_2\in {\bf D}_2\oplus \KA \oplus {\bf D}_2$, hence $\SAOL \cap \DIST =V_{\BZ }({\bf D}_2\oplus \KA \oplus {\bf D}_2)=V_{\BZ }({\bf D}_5)$ by the above. Since all antiortholattice chains are distributive and satisfy $\sdm $, the latter statement follows.\end{proof} 

\begin{corollary}\begin{itemize}
\item $V_{\BZ }({\bf D}_1)=\T \prec V_{\BZ }({\bf D}_2)=\BA \prec V_{\BZ }({\bf D}_3)\prec V_{\BZ }({\bf D}_4)=V_{\BZ }({\bf D}_2\oplus \BA \oplus {\bf D}_2)\prec V_{\BZ }({\bf D}_5)=V_{\BZ }({\bf D}_2\oplus \KA \oplus {\bf D}_2)\subsetneq V_{\BZ }({\bf D}_2\oplus \KL \oplus {\bf D}_2)=\SAOL $.
\item $V_{\BZ }({\bf D}_4)\subsetneq V_{\BZ }({\bf D}_2\oplus \MOL \oplus {\bf D}_2)\subsetneq V_{\BZ }({\bf D}_2\oplus \OML \oplus {\bf D}_2)\subsetneq V_{\BZ }({\bf D}_2\oplus \OL \oplus {\bf D}_2)\subsetneq V_{\BZ }({\bf D}_2\oplus \KL \oplus {\bf D}_2)=\SAOL $.
\item Each of the varieties $V_{\BZ }({\bf D}_2\oplus \MOL \oplus {\bf D}_2)$, $V_{\BZ }({\bf D}_2\oplus \OML \oplus {\bf D}_2)$ and $V_{\BZ }({\bf D}_2\oplus \OL \oplus {\bf D}_2)$ is incomparable to $V_{\BZ }({\bf D}_5)$.\end{itemize}\label{thecovers}\end{corollary}

\begin{proof} By Theorems \ref{saol}, \ref{thegenop} and \ref{vd4d5}, with the cover relations following by Lemma \ref{jonssonslemma}.\end{proof} 

\begin{corollary} $(V_{\BZ }({\bf D}_2\oplus \OL \oplus {\bf D}_2),V_{\BZ }({\bf D}_5)=V_{\BZ }({\bf D}_2\oplus \KA \oplus {\bf D}_2))$ is a splitting pair in the lattice of subvarieties of $\SAOL $.\label{splitsubvarsaol}\end{corollary}

\begin{proof} By Proposition \ref{olkasplit} and Theorem \ref{thegenop}, this pair is splitting in the interval $[V_{\BZ }({\bf D}_3),\SAOL ]$ of the lattice of subvarieties of $\PBZs $. But, by \cite[Theorem~$5.5$]{GLP}, the lattice of subvarieties of $\SAOL $ is the ordinal sum of the three--element chain $\{\T ,\BA ,V_{\BZ }({\bf D}_3)\}$ with this interval, hence the statement in the enunciation.\end{proof}

Analogously, since $(\OML ,V_{\BI }({\bf B}_6))$ is a splitting pair in $\Lambda (\OL )$, where ${\bf B}_6$ is the benzene ring, $(V_{\BZ }({\bf D}_2\oplus \OML \oplus {\bf D}_2),V_{\BZ }({\bf D}_2\oplus  {\bf B}_6\oplus {\bf D}_2))$ is a splitting pair in $\Lambda (V_{\BZ }({\bf D}_2\oplus \OL \oplus {\bf D}_2))$, but not in $\Lambda (\SAOL )$, since the subvariety $V_{\BZ }({\bf D}_5)$ of $\SAOL $ is incomparable to each of the varieties $(V_{\BZ }({\bf D}_2\oplus \OML \oplus {\bf D}_2)$ and $V_{\BZ }({\bf D}_2\oplus  {\bf B}_6\oplus {\bf D}_2)$; note that, in \cite{pbz6}, there is a typo in this application of Theorem \ref{thegenop} from the current paper; also note that condition \textcircled{Q} featured in \cite[Subsection~$3.1$]{pbz6} is equational in $\OL $.

Let us note that any lattice ${\bf L}=(L,\leq ^L)$ is a sublattice of the bounded lattice $B({\bf L})=(L\cup \{0,1\},\leq ^L\cup \{(0,x),(x,1):x\in L\})$, where $0,1\notin L$ and $0\neq 1$, which, in turn, as pointed out in the proof of \cite[Lemma~$5.3$]{GLP}, is a sublattice of the lattice reduct of the antiortholattice $B({\bf L})\oplus B({\bf L})^d$, so that ${\bf L}\in \S _{\Lat }(B({\bf L})\oplus B({\bf L})^d)$. Note, also, that the antiortholattice $B({\bf L})\oplus B({\bf L})^d$ has the $0$ meet--irreducible, thus it belongs to $\SAOL $, and that it is modular iff ${\bf L}$ is modular, and distributive iff ${\bf L}$ is distributive. Hence, in addition to \cite[Corollary~$5.3$]{GLP}, we get that $V_{\Lat }(\PBZs _L)=V_{\Lat }(V_{\BZ }(\AOL )_L)=V_{\Lat }(\SAOL _L)=V_{\Lat }(\AOL _L)=V_{\Lat }((\AOL \cap \SDM )_L)=\Lat $. Also, $V_{\Lat }(\MOD _L)=V_{\Lat }((\MOD \cap V_{\BZ }(\AOL ))_L)=V_{\Lat }((\MOD \cap \SAOL )_L)=V_{\Lat }((\MOD \cap \AOL )_L)=V_{\Lat }((\MOD \cap \AOL \cap \SDM )_L)$ is the variety of modular lattices. From these facts and the second example in Remark \ref{ppkasnotredpbzl}, neither of the classes of lattices $\PBZs _L$, $V_{\BZ }(\AOL )_L$, $\SAOL _L$, $\AOL _L$, $(\AOL \cap \SDM )_L$, $\MOD _L$, $(\MOD \cap V_{\BZ }(\AOL ))_L$, $(\MOD \cap \SAOL )_L$, $(\MOD \cap \AOL )_L$, $(\MOD \cap \AOL \cap \SDM )_L$ are varieties. As expected, the variety of distributive lattices is $V_{\Lat }(\DIST )=V_{\Lat }((\DIST \cap \SAOL )_L)=V_{\Lat }((\DIST \cap \SAOL )_L)=V_{\Lat }(V_{\BZ }({\bf D}_5)_L)$, which is the variety generated by the lattice reducts of antiortholattice chains, that is those of bounded involution chains; and, of course, we have similar results for other equations in the language of lattices besides modularity and distributivity, specifically those which are satisfied by an arbitrary lattice ${\bf L}$ iff they are satisfied by the lattice $B({\bf L})\oplus B({\bf L})^d$. 

It is well known, with the cover relations being easy consequences of Lemma \ref{jonssonslemma}, that:

\begin{lemma} $\T =V_{\BI }({\bf D}_1)\prec \BA =V_{\BI }({\bf D}_2)=V_{\BI }({\bf D}_2^2)=V_{\BI }({\bf MO}_1)\prec V_{\BI }({\bf MO}_2)\prec V_{\BI }({\bf MO}_3)\prec \ldots \prec V_{\BI }({\bf MO}_n)\prec \ldots \subsetneq V_{\BI }({\bf MO}_{\aleph _0})\subsetneq \MOL $ ($n\in \N $, $n\geq 3$) is an infinite ascending chain of subvarieties of $\MOL $.\label{subvarMOL}\end{lemma}

\begin{theorem} The following is an infinite ascending chain of subvarieties of $\MOD \cap \SAOL $: $V_{\BZ }({\bf D}_3)\prec V_{\BZ }({\bf D}_4)=V_{\BZ }({\bf D}_2\oplus {\bf D}_2^2\oplus {\bf D}_2)=V_{\BZ }({\bf D}_2\oplus {\bf MO}_1\oplus {\bf D}_2)\prec V_{\BZ }({\bf D}_2\oplus {\bf MO}_2\oplus {\bf D}_2)\prec V_{\BZ }({\bf D}_2\oplus {\bf MO}_3\oplus {\bf D}_2)\prec \ldots \prec V_{\BZ }({\bf D}_2\oplus {\bf MO}_n\oplus {\bf D}_2)\prec \ldots \subsetneq V_{\BZ }({\bf D}_2\oplus {\bf MO}_{\aleph _0}\oplus {\bf D}_2)\subsetneq V_{\BZ }({\bf D}_2\oplus \MOL \oplus {\bf D}_2)$ ($n\in \N $, $n\geq 3$).\label{infmodsaol}\end{theorem}

\begin{proof} By the first equality in Theorem \ref{vd4d5}, Proposition \ref{theopvar}, Theorem \ref{thegenop} and Lemma \ref{subvarMOL}.\end{proof} 

\begin{remark} Note that $\OL =\{{\bf L}\in \BI :{\bf L}\vDash x\vee x^{\prime }\approx y\vee y^{\prime }\}=\{{\bf L}\in \BI :{\bf L}\vDash x\wedge x^{\prime }\approx y\wedge y^{\prime }\}$, and recall that $\OML =\{{\bf L}\in \BI :{\bf L}\vDash x\vee (x^{\prime }\wedge (x\vee y))\approx x\vee y\}$.\end{remark}

Let us consider the following equations in the language of BZ--lattices:\vspace*{-10pt}

\begin{flushleft}\begin{tabular}{ll}
\R & $(x\wedge x^{\prime })^{\sim }\vee (y\wedge y^{\prime })^{\sim }\vee (x\wedge x^{\prime })\approx (x\wedge x^{\prime })^{\sim }\vee (y\wedge y^{\prime })^{\sim }\vee (y\wedge y^{\prime })$\\ 
\RV & $(x\wedge x^{\prime })^{\sim }\vee (y\wedge y^{\prime })^{\sim }\vee x\vee x^{\prime }\approx (x\wedge x^{\prime })^{\sim }\vee (y\wedge y^{\prime })^{\sim }\vee y\vee y^{\prime }$\\ 
\O & $(x\wedge x^{\prime })^{\sim }\vee (y\wedge y^{\prime })^{\sim }\vee x\vee (x^{\prime }\wedge (x\vee y))\approx (x\wedge x^{\prime })^{\sim }\vee (y\wedge y^{\prime })^{\sim }\vee x\vee y$\end{tabular}\end{flushleft}

Note that:

\begin{flushleft}\begin{tabular}{lcl}
\R & coincides to & $m(x\wedge x^{\prime },y\wedge y^{\prime })\approx m(y\wedge y^{\prime },x\wedge x^{\prime })$\\ 
\RV & coincides to & $m(x\vee x^{\prime },y\vee y^{\prime })\approx m(y\vee y^{\prime },x\vee x^{\prime })$\\ 
\O & coincides to & $m(x\vee (x^{\prime }\wedge (x\vee y)),x\vee y)\approx m(x\vee y,x\vee (x^{\prime }\wedge (x\vee y)))$\end{tabular}\end{flushleft}

\begin{proposition}\begin{enumerate}
\item\label{theeqr} $\{{\bf A}\in \AOL :{\bf A}\vDash \R \}=\{{\bf A}\in \AOL :{\bf A}\vDash \RV \}=({\bf D}_2\oplus \OL \oplus {\bf D}_2)\cup \I _{\BZ }(\{{\bf D}_1,{\bf D}_2\})$.
\item\label{thesi} ${\rm Si}(V_{\BZ }({\bf D}_2\oplus \OL \oplus {\bf D}_2))={\rm Si}(({\bf D}_2\oplus \OL \oplus {\bf D}_2)\cup \I _{\BZ }(\{{\bf D}_1,{\bf D}_2\}))=({\bf D}_2\oplus {\rm Si}(\OL )\oplus {\bf D}_2)\cup \I _{\BZ }(\{{\bf D}_1,{\bf D}_2\})$.
\item\label{axordsumol} $V_{\BZ }({\bf D}_2\oplus \OL \oplus {\bf D}_2)$ is relatively axiomatized by \RV\ or, equivalently, by \R\ w.r.t. $V_{\BZ }(\AOL )$.
\end{enumerate}\label{theaxr}\end{proposition}

\begin{proof} (\ref{theeqr}) The antiortholattices ${\bf D}_1$ and ${\bf D}_2$ trivially satisfy $\R $ and, for every ${\bf L}\in \OL $, the antiortholattice ${\bf D}_2\oplus {\bf L}\oplus {\bf D}_2$ fulfills $\R $, according to Lemma \ref{eqthrclsop}. Also, ${\bf D}_4={\bf D}_2\oplus {\bf D}_2\oplus {\bf D}_2\in {\bf D}_2\oplus \OL \oplus {\bf D}_2$.  

Now let ${\bf A}\in \AOL \setminus \I _{\BZ }(\{{\bf D}_1,{\bf D}_2\})$ such that ${\bf A}\vDash \R $, $a\in A\setminus \{0,1\}=A\setminus S({\bf A})$ and $c=a\wedge a^{\prime }$. Then $c\leq a\vee a^{\prime }=c^{\prime }$ and, for all $x\in A\setminus \{0,1\}=A\setminus S({\bf A})$, $x\wedge x^{\prime }=c$ and $x\vee x^{\prime }=(x\wedge x^{\prime })^{\prime }=c^{\prime }$, in particular $c\leq x\leq c^{\prime }$, therefore the interval $[c,c^{\prime }]$ of ${\bf A}$ is an involution sublattice of ${\bf A}_{bi}$, thus a BI--lattice since it is bounded, and fulfills: $[c,c^{\prime }]=A\setminus \{0,1\}$ and, as a BI--lattice, $[c,c^{\prime }]\in \OL $. Therefore ${\bf A}={\bf D}_2\oplus [c,c^{\prime }]\oplus {\bf D}_2\in {\bf D}_2\oplus \OL \oplus {\bf D}_2$.

Similarly for $\RV $.

\noindent (\ref{thesi}) By (\ref{theeqr}), Lemma \ref{si}.(\ref{si2}) and Lemma \ref{thesiareaols} or directly from Proposition \ref{surjop}.(\ref{surjop2}).

\noindent (\ref{axordsumol}) Let $\W =V_{\BZ }({\bf D}_2\oplus \OL \oplus {\bf D}_2)$ and $\U =\{{\bf L}\in V_{\BZ }(\AOL ):{\bf L}\vDash \R \}$. By Lemma \ref{eqthrclsop}, $\W \subseteq \U $. By (\ref{thesi}), any ${\bf A}\in {\rm Si}(\U )$ belongs to $({\bf D}_2\oplus \OL \oplus {\bf D}_2)\cup \I _{\BZ }(\{{\bf D}_1,{\bf D}_2\})$, hence $\U \subseteq V_{\BZ }(({\bf D}_2\oplus \OL \oplus {\bf D}_2)\cup \I _{\BZ }(\{{\bf D}_1,{\bf D}_2\}))=V_{\BZ }(({\bf D}_2\oplus \OL \oplus {\bf D}_2)\cup \{{\bf D}_1,{\bf D}_2\})=\W $. Therefore $\W =\U $.

Similarly for $\RV $.\end{proof} 

As shown by Remark \ref{nonnullary}, the following theorem provides us with a way to relatively axiomatize any subvariety of $\SAOL $ w.r.t. $\SAOL $, thus also w.r.t. $\PBZs $.

\begin{theorem} Let $\V $ be a subvariety of $\KL $, $I$ a (not necessarily nonempty) set and, for all $i\in I$, $t_i$ and $u_i$ terms in the language of $\BI $.\begin{enumerate}
\item\label{theax1} If ${\bf D}_3\in \V $, then: $\V $ is relatively axiomatized by $\{t_i\approx u_i:i\in I\}$ w.r.t. $\KL $ iff $V_{\BZ }({\bf D}_2\oplus \V \oplus {\bf D}_2)$ is relatively axiomatized by $\{t_i\approx u_i:i\in I\}$ w.r.t. $\SAOL $.
\item\label{theax2} If ${\bf D}_3\notin \V $ and, for all $i\in I$, $t_i$ and $u_i$ have nonzero arities, then: $\V $ is relatively axiomatized by $\{t_i\approx u_i:i\in I\}$ w.r.t. $\OL $ iff $V_{\BZ }({\bf D}_2\oplus \V \oplus {\bf D}_2)$ is relatively axiomatized by $\{m(t_i,u_i)\approx m(u_i,t_i):i\in I\}$ w.r.t. $V_{\BZ }({\bf D}_2\oplus \OL \oplus {\bf D}_2)$ iff $V_{\BZ }({\bf D}_2\oplus \V \oplus {\bf D}_2)$ is relatively axiomatized by $\{\R \}\cup \{m(t_i,u_i)\approx m(u_i,t_i):i\in I\}$ or, equivalently, by $\{\RV \}\cup \{m(t_i,u_i)\approx m(u_i,t_i):i\in I\}$, w.r.t. $V_{\BZ }(\AOL )$ (in particular w.r.t. $\SAOL $).\end{enumerate}\label{theax}\end{theorem}

\begin{proof}  Let us denote by $\W =V_{\BZ }({\bf D}_2\oplus \V \oplus {\bf D}_2)\subseteq \SAOL $.

\noindent (\ref{theax1}) Assume that ${\bf D}_3\in \V $. Theorem \ref{saol} gives us the statement for $I=\emptyset $.

Now assume that $I$ is nonempty, and let us denote by $\K =\{{\bf K}\in \KL :{\bf K}\vDash \{t_i\approx u_i:i\in I\}\}$ and by $\U =\{{\bf L}\in \SAOL :{\bf L}\vDash \{t_i\approx u_i:i\in I\}\}$, so that $\U _{BI}\subseteq \K $, thus ${\bf D}_2\oplus \U _{BI}\oplus {\bf D}_2\subseteq {\bf D}_2\oplus \K \oplus {\bf D}_2$, and, by Lemma \ref{eqthrclsop}, if ${\bf D}_3\in \K $, then ${\bf D}_2\oplus \U _{BI}\oplus {\bf D}_2\subseteq {\bf D}_2\oplus \K \oplus {\bf D}_2\subseteq \U $, so that $\U =V_{\BZ }({\bf D}_2\oplus \U _{BI}\oplus {\bf D}_2)=V_{\BZ }({\bf D}_2\oplus \K \oplus {\bf D}_2)$ by Lemma \ref{varvsbired}.

Assume that $\V =\K $ and let us prove that $\W =\U $. Since ${\bf D}_3\in \V =\K $, by the above it follows that ${\bf D}_2\oplus \V \oplus {\bf D}_2={\bf D}_2\oplus \K \oplus {\bf D}_2\subseteq \U $, hence $\W =V_{\BZ }({\bf D}_2\oplus \V \oplus {\bf D}_2)\subseteq \U $. On the other hand, by Lemma \ref{embedaols}, for any ${\bf A}\in {\rm Si}(\U )$, we have ${\bf A}\in \S _{\BZ }({\bf D}_2\oplus {\bf A}_{bi}\oplus {\bf D}_2)\subseteq \W $ since ${\bf D}_2\oplus {\bf A}_{bi}\oplus {\bf D}_2\in {\bf D}_2\oplus \U _{BI}\oplus {\bf D}_2\subseteq {\bf D}_2\oplus \K \oplus {\bf D}_2={\bf D}_2\oplus \V \oplus {\bf D}_2\subseteq \W $, therefore $\U \subseteq \W $, as well.

Now assume that $\W =\U $ and let us prove that $\V =\K $. Since $V_{\BZ }({\bf D}_2\oplus \V \oplus {\bf D}_2)=\W =\U \subseteq V_{\BZ }({\bf D}_2\oplus \U _{BI}\oplus {\bf D}_2)\subseteq V_{\BZ }({\bf D}_2\oplus \K \oplus {\bf D}_2)$ by Lemma \ref{varvsbired} and the above, it follows that $\V \subseteq \K $ by Theorem \ref{thegenop}, so that ${\bf D}_3\in \V \subseteq \K $, thus ${\bf D}_3\in \K $, so, by the above, $V_{\BZ }({\bf D}_2\oplus \K \oplus {\bf D}_2)=\U =\W =V_{\BZ }({\bf D}_2\oplus \V \oplus {\bf D}_2)$, hence $\V =\K $, again by Theorem \ref{thegenop}.

\noindent (\ref{theax2}) Assume that ${\bf D}_3\notin \V $, so that $\V \subseteq \OL $ by Proposition \ref{olkasplit} and thus $\W \subseteq V_{\BZ }({\bf D}_2\oplus \OL \oplus {\bf D}_2)$. Proposition \ref{theaxr}.(\ref{axordsumol}) gives us the statement for $I=\emptyset $, as well as the last equivalence.

Now assume that $I$ is nonempty and that $t_i$ and $u_i$ are non--nullary for each $i\in I$, and let us denote by $\K =\{{\bf K}\in \OL :{\bf K}\vDash \{t_i\approx u_i:i\in I\}\}$ and by $\U =\{{\bf L}\in V_{\BZ }({\bf D}_2\oplus \OL \oplus {\bf D}_2):{\bf L}\vDash \{m(t_i,u_i)\approx m(u_i,t_i):i\in I\}\}$, so that $V_{\BZ }({\bf D}_2\oplus \K \oplus {\bf D}_2)\subseteq \U $ by Lemma \ref{eqthrclsop}.

Assume that $\V =\K $ and let us prove that $\W =\U $. By  Lemma \ref{eqthrclsop} and the fact that $\W \subseteq V_{\BZ }({\bf D}_2\oplus \OL \oplus {\bf D}_2)$, it follows that $\W \subseteq \U $. Now let ${\bf A}\in {\rm Si}(\U )\subseteq {\rm Si}(V_{\BZ }({\bf D}_2\oplus \OL \oplus {\bf D}_2))$, so that, by Proposition \ref{theaxr}.(\ref{thesi}), either ${\bf A}\in \I _{\BZ }(\{{\bf D}_1,{\bf D}_2\})\subseteq \W $ or ${\bf A}={\bf D}_2\oplus {\bf K}\oplus {\bf D}_2$ for some ${\bf K}\in \OL $ and, since ${\bf A}\in \U $, by Lemma \ref{eqthrclsop} it follows that ${\bf K}\in \K =\V $, thus ${\bf A}\in {\bf D}_2\oplus \V \oplus {\bf D}_2\subseteq \W $, hence $\U \subseteq \W $, as well.

Now assume that $\W =\U $ and let us prove that $\V =\K $. We have $V_{\BZ }({\bf D}_2\oplus \V \oplus {\bf D}_2)=\W =\U \supseteq V_{\BZ }({\bf D}_2\oplus \K \oplus {\bf D}_2)$, thus $\V \supseteq \K $ by Theorem \ref{thegenop}. But ${\bf D}_2\oplus \V \oplus {\bf D}_2\subseteq V_{\BZ }({\bf D}_2\oplus \V \oplus {\bf D}_2)=\W =\U \subseteq V_{\BZ }({\bf D}_2\oplus \OL \oplus {\bf D}_2)$, hence, by Lemma \ref{eqthrclsop} and Theorem \ref{thegenop}, $\V \subseteq \K $, as well.\end{proof} 

Let us also note, from the above, that:

\begin{proposition} For any subvariety $\V $ of $\OL $:\begin{itemize}
\item if $\V $ is axiomatized w.r.t. $\OL $ by a (not necessarily nonempty) set of equations $\{t_i\approx u_i:i\in I\}$, where $t_i$ and $u_i$ are nonnullary terms over $\BI $ for all $i\in I$, then: $\AOL \cap V_{\BZ }({\bf D}_2\oplus \V \oplus {\bf D}_2)=\{{\bf A}\in \AOL :{\bf A}\vDash \RV ,{\bf A}\vDash \{m(t_i,u_i)\approx m(u_i,t_i):i\in I\}\}=\{{\bf A}\in \AOL :{\bf A}\vDash \R ,{\bf A}\vDash \{m(t_i,u_i)\approx m(u_i,t_i):i\in I\}\}=({\bf D}_2\oplus \V \oplus {\bf D}_2)\cup \I _{\BZ }(\{{\bf D}_1,{\bf D}_2\})$.
\item ${\rm Si}(V_{\BZ }({\bf D}_2\oplus \V \oplus {\bf D}_2))={\rm Si}(({\bf D}_2\oplus \V \oplus {\bf D}_2)\cup \I _{\BZ }(\{{\bf D}_1,{\bf D}_2\}))=({\bf D}_2\oplus {\rm Si}(\V )\oplus {\bf D}_2)\cup \I _{\BZ }(\{{\bf D}_1,{\bf D}_2\})$.\end{itemize}\label{aolsitheax}\end{proposition}

And, more generally, replacing Lemma \ref{eqthrclsop} with the hypothesis in the following enunciation, we get:

\begin{theorem} Let $\V $ be a subvariety of $\OL $, $I$ a (not necessarily nonempty) set and, for all $i\in I$, $t_i$, $u_i$, $v_i$ and $w_i$ be terms in the language of $\BI $ such that, for any ${\bf L}\in \OL $, ${\bf L}\vDash \{t_i\approx u_i:i\in I\}$ iff ${\bf D}_2\oplus {\bf L}\oplus {\bf D}_2\vDash \{v_i\approx w_i:i\in I\}$. Then: $\bullet $ $\V $ is axiomatized by $\{t_i\approx u_i:i\in I\}$ w.r.t. $\OL $ iff $V_{\BZ }({\bf D}_2\oplus \V \oplus {\bf D}_2)$ is relatively axiomatized by $\{v_i\approx w_i:i\in I\}$ w.r.t. $V_{\BZ }({\bf D}_2\oplus \OL \oplus {\bf D}_2)$, thus by $\{\RV \}\cup \{v_i\approx w_i:i\in I\}$ or, equivalently, by $\{\R \}\cup \{v_i\approx w_i:i\in I\}$ w.r.t. $V_{\BZ }(\AOL )$, and, in this case:

$\bullet $ $\AOL \cap V_{\BZ }({\bf D}_2\oplus \V \oplus {\bf D}_2)=\{{\bf A}\in \AOL :{\bf A}\vDash \RV ,{\bf A}\vDash \{v_i\approx w_i:i\in I\}\}=\{{\bf A}\in \AOL :{\bf A}\vDash \R ,{\bf A}\vDash \{v_i\approx w_i:i\in I\}\}=({\bf D}_2\oplus \V \oplus {\bf D}_2)\cup \I _{\BZ }(\{{\bf D}_1,{\bf D}_2\})$.\label{moreax}\end{theorem}

\begin{corollary}\begin{enumerate}
\item\label{theotheqnsiOML} $\AOL \cap V_{\BZ }({\bf D}_2\oplus \OML \oplus {\bf D}_2)=\{{\bf A}\in \AOL :{\bf A}\vDash \{\R ,\O \}\}=\{{\bf A}\in \AOL :{\bf A}\vDash \{\RV ,\O \}\}=({\bf D}_2\oplus \OML \oplus {\bf D}_2)\cup \I _{\BZ }(\{{\bf D}_1,{\bf D}_2\})$.

\noindent ${\rm Si}(V_{\BZ }({\bf D}_2\oplus \OML \oplus {\bf D}_2))={\rm Si}(({\bf D}_2\oplus \OML \oplus {\bf D}_2)\cup \I _{\BZ }(\{{\bf D}_1,{\bf D}_2\}))=({\bf D}_2\oplus {\rm Si}(\OML )\oplus {\bf D}_2)\cup \I _{\BZ }(\{{\bf D}_1,{\bf D}_2\})$.

\noindent $V_{\BZ }({\bf D}_2\oplus \OML \oplus {\bf D}_2)$ is relatively axiomatized by \O\ w.r.t. $V_{\BZ }({\bf D}_2\oplus \OL \oplus {\bf D}_2)$, thus by $\{\R ,\linebreak \O \}$ or, equivalently, by $\{\RV ,\O \}$ w.r.t. $V_{\BZ }(\AOL )$.

\item\label{theotheqnsiMOL} $\AOL \cap V_{\BZ }({\bf D}_2\oplus \MOL \oplus {\bf D}_2)=\{{\bf A}\in \AOL :{\bf A}\vDash \{\R ,\modular \}\}=\{{\bf A}\in \AOL :{\bf A}\vDash \{\RV ,\modular \}\}\linebreak =({\bf D}_2\oplus \MOL \oplus {\bf D}_2)\cup \I _{\BZ }(\{{\bf D}_1,{\bf D}_2\})$.

\noindent ${\rm Si}(V_{\BZ }({\bf D}_2\oplus \MOL \oplus {\bf D}_2))={\rm Si}(({\bf D}_2\oplus \MOL \oplus {\bf D}_2)\cup \I _{\BZ }(\{{\bf D}_1,{\bf D}_2\}))=({\bf D}_2\oplus {\rm Si}(\MOL )\oplus {\bf D}_2)\cup \I _{\BZ }(\{{\bf D}_1,{\bf D}_2\})$.

\noindent $V_{\BZ }({\bf D}_2\oplus \MOL \oplus {\bf D}_2)=\MOD \cap V_{\BZ }({\bf D}_2\oplus \OL \oplus {\bf D}_2)$.

\item\label{theotheqnsiBA} $\AOL \cap V_{\BZ }({\bf D}_4)=\{{\bf A}\in \AOL :{\bf A}\vDash \{\R ,\dist \}\}=\{{\bf A}\in \AOL :{\bf A}\vDash \{\RV ,\dist \}\}=({\bf D}_2\oplus \BA \oplus {\bf D}_2)\cup \I _{\BZ }(\{{\bf D}_1,{\bf D}_2\})$.

\noindent ${\rm Si}(V_{\BZ }({\bf D}_4))={\rm Si}(V_{\BZ }({\bf D}_2\oplus \BA \oplus {\bf D}_2))={\rm Si}(({\bf D}_2\oplus \BA \oplus {\bf D}_2)\cup \I _{\BZ }(\{{\bf D}_1,{\bf D}_2\}))=({\bf D}_2\oplus {\rm Si}(\BA )\oplus {\bf D}_2)\cup \I _{\BZ }(\{{\bf D}_1,{\bf D}_2\})=\I _{\BZ }(\{{\bf D}_1,{\bf D}_2,{\bf D}_3,{\bf D}_4\})$.

\noindent $V_{\BZ }({\bf D}_4)=V_{\BZ }({\bf D}_2\oplus \BA \oplus {\bf D}_2)=\DIST \cap V_{\BZ }({\bf D}_2\oplus \OL \oplus {\bf D}_2)$.

\item\label{theotheqnsiKA} $V_{\BZ }({\bf D}_5)=V_{\BZ }({\bf D}_2\oplus \KA \oplus {\bf D}_2)=\DIST \cap \SAOL $.

\item\label{theotheqnsiModPKA} $V_{\BZ }({\bf D}_2\oplus \{{\bf K}\in \KL :{\bf K}\vDash \modular \}\oplus {\bf D}_2)=\MOD \cap \SAOL $.\end{enumerate}\label{theotheqnsi}\end{corollary}

\begin{proof} (\ref{theotheqnsiOML}) By Theorem \ref{theax}.(\ref{theax2}) and Proposition \ref{aolsitheax}.

\noindent (\ref{theotheqnsiMOL}), respectively (\ref{theotheqnsiBA}) By Theorem \ref{moreax} with $I$ a singleton and the equation $t_i\approx u_i$ coinciding with $v_i\approx w_i$ and coinciding with $\modular $, respectively $\dist $.

\noindent (\ref{theotheqnsiKA}) and (\ref{theotheqnsiModPKA}) By Theorem \ref{theax}.(\ref{theax1}) and Proposition \ref{aolsitheax}.\end{proof}

Now let us get back to the BI--lattice reducts of the members of varieties of \PBZ --lattices.

\begin{corollary} Let $\W $ be a subvariety of $\SAOL $.\begin{enumerate}
\item\label{genbired1} If $\W $ contains ${\bf D}_5$, then: $V_{\BI }(\W _{BI})=\{{\bf K}\in \KL :{\bf D}_2\oplus {\bf K}\oplus {\bf D}_2\in \W \}$ and $\W =V_{\BZ }({\bf D}_2\oplus \W _{BI}\oplus {\bf D}_2)=V_{\BZ }({\bf D}_2\oplus V_{\BI }(\W _{BI})\oplus {\bf D}_2)$.
\item\label{genbired2} If $\W $ does not contain ${\bf D}_5$, then $\W \subsetneq V_{\BZ }({\bf D}_2\oplus \W _{BI}\oplus {\bf D}_2)$.\end{enumerate}\label{genbired}\end{corollary}

\begin{proof} (\ref{genbired1}) Let us denote by $\V =\{{\bf K}\in \KL :{\bf D}_2\oplus {\bf K}\oplus {\bf D}_2\in \W \}$. Then ${\bf D}_3\in \V $ and, by Theorem \ref{thegenop}, $\V $ is a subvariety of $\KL $ such that $\W =V_{\BZ }({\bf D}_2\oplus \V \oplus {\bf D}_2)$. Thus, by Theorem \ref{theax}.(\ref{theax1}), if a set $\Gamma $ of equations over $\BI $ axiomatizes $\V $ w.r.t. $\KL $, then $\Gamma $ also axiomatizes $\W $ w.r.t. $\SAOL $, thus $\W _{BI}\vDash \Gamma $, so $\V _{\BI }(\W _{BI})\vDash \Gamma $, hence $\V _{\BI }(\W _{BI})\subseteq \V $. But then, by Corollary \ref{varsisubvarsaol}, it follows that $\W \subseteq V_{\BZ }({\bf D}_2\oplus \W _{BI}\oplus {\bf D}_2)=V_{\BZ }({\bf D}_2\oplus V_{\BI }(\W _{BI})\oplus {\bf D}_2)\subseteq V_{\BZ }({\bf D}_2\oplus \V \oplus {\bf D}_2)=\W $, so $\W =V_{\BZ }({\bf D}_2\oplus \W _{BI}\oplus {\bf D}_2)=V_{\BZ }({\bf D}_2\oplus V_{\BI }(\W _{BI})\oplus {\bf D}_2)=V_{\BZ }({\bf D}_2\oplus \V \oplus {\bf D}_2)$, from which, again by Theorem \ref{thegenop}, it follows that $V_{\BI }(\W _{BI})=\V $.

\noindent (\ref{genbired2}) We have $\T \subsetneq V_{\BZ }({\bf D}_2\oplus \T \oplus {\bf D}_2)=V_{\BZ }({\bf D}_3)$ and $\BA \subsetneq V_{\BZ }({\bf D}_2\oplus \BA \oplus {\bf D}_2)=V_{\BZ }({\bf D}_4)$ by Proposition \ref{theopvar}, Theorem \ref{vd4d5} and Corollary \ref{thecovers}. If $\W $ is a subvariety of $\SAOL $ other than $\T $ and $\BA $, then ${\bf D}_3\in \W $, thus ${\bf D}_5\in {\bf D}_2\oplus \W _{BI}\oplus {\bf D}_2$, so, if ${\bf D}_5\notin \W $, then $\W \neq V_{\BZ }({\bf D}_2\oplus \W _{BI}\oplus {\bf D}_2)$, thus $\W \subsetneq V_{\BZ }({\bf D}_2\oplus \W _{BI}\oplus {\bf D}_2)$ by Proposition \ref{sisubvarsaol}.\end{proof}

Recall from Corollary \ref{splitsubvarsaol} that the subvarieties of $\SAOL $ which do not contain ${\bf D}_5$ are exactly the subvarieties of $V_{\BZ }({\bf D}_2\oplus \OL \oplus {\bf D}_2)$, and from Theorem \ref{vd4d5} and Corollary \ref{thecovers} that the five subvarieties of $V_{\BZ }({\bf D}_5)$ are exactly the distributive subvarieties of $\SAOL $, namely the varieties generated by antiortholattice chains.

\begin{corollary} If $\W $ is a proper subvariety of $\SAOL $ which either includes $V_{\BZ }({\bf D}_5)$ or is included in $V_{\BZ }({\bf D}_5)$, then $V_{\BI }(\W _{BI})\subsetneq \KL $.\end{corollary}

\begin{proof} By Theorem \ref{thegenop}, Corollary \ref{varredsaol} and Corollary \ref{genbired}.(\ref{genbired1}) for the case when ${\bf D}_5\in \W $, in particular for $\W =V_{\BZ }({\bf D}_5)$, hence also for the case when $\W \subseteq V_{\BZ }({\bf D}_5)$.\end{proof}

\begin{corollary} For any $\V \in [\KA )_{\Lambda (\KL )}$, if $\W =V_{\BZ }({\bf D}_2\oplus \V \oplus {\bf D}_2)\in [V_{\BZ }({\bf D}_5),\SAOL ]_{\Lambda (\PBZs )}$, then $V_{\BI }(\W _{\BI })=\V $. In particular, $V_{\BI }(V_{\BZ }({\bf D}_5)_{\BI })=\KA $, so, for any $\U \in [V_{\BZ }({\bf D}_5)=\SAOL \cap \DIST ,\DIST ]_{\Lambda (\PBZs )}$, $V_{\BI }(\U _{\BI })=V_{\BI }(\DIST _{\BI })=\KA $ and $\U _{\BI }$ is not a variety.\end{corollary}

\begin{proof} By Proposition \ref{moreolka}, $\V =V_{\BI }({\bf D}_2\oplus \V \oplus {\bf D}_2)\supseteq V_{\BZ }({\bf D}_2\oplus \V \oplus {\bf D}_2)_{BI}=\W _{BI}\supseteq {\bf D}_2\oplus \V \oplus {\bf D}_2$, thus $V_{\BI }(\W _{\BI })=\V $, hence, for any $\U \in [\SAOL \cap \DIST ,\DIST ]_{\Lambda (\PBZs )}$, $V_{\BI }(\U _{\BI })=\KA $, therefore $\U _{\BI }$ is not a variety, by Remark \ref{ppkasnotredpbzl}.\end{proof}

Now let us take another look at the lattice of subvarieties of $\PBZs $ and in particular that of $\OML \vee V_{\BZ }(\AOL )$.

If $\s $, $\V $ and $\W $ are subvarieties of a variety $\U $ of similar algebras, then, as pointed out in {\rm \cite[Subsection $3.3$]{pbz6}}:\begin{enumerate}
\item\label{interssdirprod1} $\V \vee \W =\V \times _s\W $ iff ${\rm Si}(\V \vee \W )={\rm Si}(\V )\cup {\rm Si}(\W )$ iff ${\rm Si}(\V \vee \W )\subseteq {\rm Si}(\V )\cup {\rm Si}(\W )$; 
\item\label{interssdirprod2} if $\V \vee \W =\V \times _s\W $, then $\s \cap (\V \vee \W )=(\s \cap \V )\vee (\s \cap \W )=(\s \cap \V )\times _s(\s \cap \W )$.\end{enumerate}

An obvious consequence of (\ref{interssdirprod2}) is the fact that, if ${\cal L}$ is a sublattice of the lattice of subvarieties of $\U $ such that $\X \vee \Y =\X \times _s\Y $ for all members $\X $, $\Y $ of ${\cal L}$, then the lattice ${\cal L}$ is distributive.

By (\ref{interssdirprod1}) and a generalization from \cite{bj} of Lemma \ref{jonssonslemma} to an arbitrary system of generators $\G $ for a congruence--distributive variety $\X $, stating that the subdirectly irreducible members of $\X =V_{\X }(\G )$ belong to $\H _{\X }\S _{\X }(\Y )$, where $\Y $ is the class of the ultraproducts of the members of $\G $, we get \cite[Theorem $4.10$]{bj}: if $\U $ is congruence--distributive, then $\V \vee \W =\V \times _s\W $.

Also by (\ref{interssdirprod1}), if $\V \vee \W =\V \times _s\W $, in particular if $\U $ is congruence--distributive, then: $\V \vee \W =\U $ iff ${\rm Si}(\V )\cup {\rm Si}(\W )={\rm Si}(\U )$ iff ${\rm Si}(\U )\subseteq \V \cup \W $, thus: $\V \vee \W \subsetneq \U $ iff ${\rm Si}(\U )={\rm Si}(\V )\cup {\rm Si}(\W )\subsetneq {\rm Si}(\U )$ iff ${\rm Si}(\U )\nsubseteq \V \cup \W $. For instance, out of the following subdirectly irreducible \PBZ --lattices, ${\bf L}$ fails both $\wsdm $ and $\ji $, so that it belongs to ${\rm Si}(\PBZs )\setminus (\WSDM \cup V_{\BZ }(\OML \boxplus V_{\BZ }(\AOL )))$ and thus to ${\bf L}\in \PBZs \setminus (\WSDM \vee V_{\BZ }(\OML \boxplus V_{\BZ }(\AOL )))$, while ${\bf M}$ satisfies $\wsdm $ and fails $\sdm $ and $\jo $, so belongs to ${\rm Si}(\WSDM )\setminus (\SDM \cup V_{\BZ }(\AOL ))$ and thus to $\WSDM \setminus (\SDM \vee V_{\BZ }(\AOL ))$, hence $\WSDM \vee V_{\BZ }(\OML \boxplus V_{\BZ }(\AOL ))\subsetneq \PBZs $ and  $\SDM \vee V_{\BZ }(\AOL ))\subsetneq \WSDM $:

\begin{center}\begin{tabular}{ccc}\begin{picture}(30,73)(0,0)
\put(-20,53){${\bf L}$:}
\put(15,0){\circle*{3}}
\put(15,60){\circle*{3}}
\put(15,30){\circle*{3}}
\put(0,15){\circle*{3}}
\put(30,45){\circle*{3}}
\put(-15,30){\circle*{3}}
\put(45,30){\circle*{3}}
\put(13,-9){$0$}
\put(13,63){$1$}
\put(18,27){$c\!=\!c^{\prime }$}
\put(-22,28){$a$}
\put(-6,10){$b$}
\put(47,27){$a^{\prime}$}
\put(32,44){$b^{\prime}$}
\put(15,0){\line(-1,1){30}}
\put(15,0){\line(1,1){30}}
\put(15,60){\line(1,-1){30}}
\put(15,60){\line(-1,-1){30}}
\put(0,15){\line(1,1){30}}
\end{picture}
&
\begin{picture}(245,73)(0,0)
\put(29,50){\begin{tabular}{c|ccccccc}
$x$ & $0$ & $a$ & $a^{\prime }$ & $b$ & $b^{\prime }$ & $c$ & $1$\\ \hline
$x^{\sim }$ & $1$ & $a^{\prime }$ & $a$ & $a^{\prime }$ & $0$ & $0$ & $0$\end{tabular}}
\put(29,10){\begin{tabular}{c|cccccccccc}
$x$ & $0$ & $d$ & $d^{\prime}$ & $e$ & $f$ & $g$ & $e^{\prime}$ &
$f^{\prime}$ & $g^{\prime}$ & $1$\\\hline
$x^{\sim}$ & $1$ & $d^{\prime}$ & $d$ & $d$ & $d$ & $d$ & $0$ & $0$ & $0$ & $0$\end{tabular}}\end{picture}
&
\begin{picture}(60,73)(0,0)
\put(-5,50){${\bf M}$:} 
\put(28,-9){$0$} 
\put(30,0){\circle*{3}} 
\put(30,20){\circle*{3}} 
\put(22,39){$g^{\prime }$} 
\put(30,40){\circle*{3}} 
\put(40,10){\circle*{3}} 
\put(40,30){\circle*{3}} 
\put(42,26){$g$} 
\put(30,60){\circle*{3}} 
\put(40,50){\circle*{3}} 
\put(10,20){\circle*{3}} 
\put(3,18){$d$} 
\put(62,47){$d^{\prime }$} 
\put(24,19){$e$} 
\put(40,7){$f$} 
\put(41,45){$e^{\prime }$} 
\put(23,59){$f^{\prime }$} 
\put(60,50){\circle*{3}} 
\put(40,70){\circle*{3}} 
\put(38,73){$1$}
\put(40,70){\line(1,-1){20}}
\put(40,70){\line(-1,-1){10}}
\put(40,70){\line(0,-1){20}}
\put(30,40){\line(1,-1){10}}
\put(30,40){\line(0,1){20}}
\put(40,30){\line(1,1){20}}
\put(30,40){\line(-1,-1){20}}
\put(40,30){\line(0,-1){20}}
\put(30,40){\line(1,1){10}}
\put(40,30){\line(-1,-1){10}}
\put(30,0){\line(1,1){10}}
\put(30,0){\line(0,1){20}} 
\put(30,0){\line(-1,1){20}}
\end{picture}\end{tabular}\end{center}

Indeed, ${\rm Con}_{\BZ }({\bf L})=\{\Delta _L,\theta ,\nabla _L\}\cong {\bf D}_3$, where $L/\theta =\{\{0,a^{\prime}\},\{b,c,b^{\prime}\},\{a,1\}\}$, while ${\rm Con}_{\BZ }({\bf M})=\{\Delta _M,\alpha ,\beta ,$\linebreak $\nabla _M\}\cong {\bf D}_4$, where $M/\alpha =\{\{0,f\},\{e,g\},\{d\},\{d^{\prime}\},\{g^{\prime},e^{\prime}\},\{f^{\prime},1\}\}$ and $M/\beta =\{\{0,e,f,g,d^{\prime}\},\{d,e^{\prime},f^{\prime},g^{\prime},1\}\}$, so that $\alpha \subset \beta $, hence ${\bf L}$ and ${\bf M}$ are subdirectly irreducible. In ${\bf L}$ we have: $(c\wedge b^{\sim })^{\sim }=1\neq a=c^{\sim }\vee b^{\sim \sim }$, thus ${\bf L}\nvDash \wsdm $, and $b=b\wedge c$ and $(c\wedge b^{\sim })\vee (c\wedge b^{\sim \sim })=b\neq c$, hence ${\bf L}\nvDash \ji $. Similarly, we can check that ${\bf M}$ satisfies $\wsdm $, but fails $\sdm $ and $\jo $ (consider the pairs $(e,f)$, respectively $(f^{\prime},f)$).

Remembering from \cite{pbzsums} that $\WSDM \cap V_{\BZ }(\OML \boxplus \AOL )=\WSDM \cap V_{\BZ }(\OML \boxplus V_{\BZ }(\AOL ))=\OML \vee V_{\BZ }(\AOL )$ and taking into account the distributivity of $\Lambda (\PBZs )$, we obtain the sublattice $\{\OML \vee V_{\BZ }(\AOL ),\SDM \vee V_{\BZ }(\AOL ),\WSDM ,V_{\BZ }(\OML \boxplus \AOL ),\SDM \vee V_{\BZ }(\OML \boxplus \AOL ),\WSDM \vee V_{\BZ }(\OML \boxplus \AOL ),V_{\BZ }(\OML \boxplus V_{\BZ }(\AOL )),\SDM \vee V_{\BZ }(\OML \boxplus V_{\BZ }(\AOL )),\WSDM \vee V_{\BZ }(\OML \boxplus V_{\BZ }(\AOL )),\PBZs \}\cong {\bf D}_3^2\oplus {\bf D}_2$ of $\Lambda (\PBZs )$.

Now, if $\U $ is again arbitrary and we consider the maps:

\noindent $\varphi :[\V \cap \W ,\V ]_{\Lambda (\U )}\times [\V \cap \W ,\W ]_{\Lambda (\U )}\rightarrow [\V \cap \W ,\V \vee \W ]_{\Lambda (\U )}$, $\varphi (\s _1,\s _2)=\s _1\vee \s _2$ for all $\s _1\in [\V \cap \W ,\V ]_{\Lambda (\U )}$ and $\s _2\in [\V \cap \W ,\W ]_{\Lambda (\U )}$,

\noindent $\psi :[\V \cap \W ,\V \vee \W ]_{\Lambda (\U )}\rightarrow [\V \cap \W ,\V ]_{\Lambda (\U )}\times [\V \cap \W ,\W ]_{\Lambda (\U )}$, $\psi (\s )=(\s \cap \V ,\s \cap \W )$ for all $\s \in [\V \cap \W ,\V \vee \W ]_{\Lambda (\U )}$, then:

$\bullet $ $\varphi $ and $\psi $ are clearly order--preserving; moreover, $\varphi $ preserves the join and $\psi $ preserves the intersection;

$\bullet $ if $\V \vee \W =\V \times _s\W $, then $\varphi \circ \psi $ is the identity map of $[\V \cap \W ,\V \vee \W ]_{\Lambda (\U )}$, because, by (\ref{interssdirprod2}) above, for any $\s \in [\V \cap \W ,\V \vee \W ]_{\Lambda (\U )}$, so that $\s \cap \V \in [\V \cap \W ,\V ]_{\Lambda (\U )}$ and $\s \cap \W \in [\V \cap \W ,\W ]_{\Lambda (\U )}$, we have $\s =\s \cap (\V \vee \W )=(\s \cap \V )\vee (\s \cap \W )=\varphi (\psi (\s ))$;

$\bullet $ if $\mathcal{V}\subseteq [\V \cap \W ,\V ]_{\Lambda (\U )}$ and $\mathcal{W}\subseteq [\V \cap \W ,\W ]_{\Lambda (\U )}$ are such that $\s _1\vee \s _2=\s _1\times _s\s _2$ for all $\s _1\in \mathcal{V}$ and all $\s _2\in \mathcal{W}$, then $\psi \circ \varphi $ is the identity map on $\mathcal{V}\times \mathcal{W}$, so, if we also have $\V \vee \W =\V \times _s\W $, then $\varphi \mid _{\mathcal{V}\times \mathcal{W}}:\mathcal{V}\times \mathcal{W}\rightarrow \varphi (\mathcal{V}\times \mathcal{W})$ and $\psi \mid _{\varphi (\mathcal{V}\times \mathcal{W})}:\varphi (\mathcal{V}\times \mathcal{W})\rightarrow \mathcal{V}\times \mathcal{W}$ are mutually inverse order isomorphisms;

indeed, if $\s _1\in \mathcal{V}$ and $\s _2\in \mathcal{W}$ are such that $\s _1\vee \s _2=\s _1\times _s\s _2$, then $\s _1\cap \W \subseteq \V \cap \W \subseteq \s _1\cap \s _2\subseteq \s _1\cap \W $, thus $\s _1\cap \W =\V \cap \W $ and, similarly, $\s _2\cap \V =\V \cap \W $, hence, by (\ref{interssdirprod2}): $\psi (\varphi (\s _1,\s _2))=\psi (\s )=(\s \cap \V ,\s \cap \W )=((\s _1\vee \s _2)\cap \V ,(\s _1\vee \s _2)\cap \W )=((\s _1\cap \V )\vee (\s _2\cap \V ),(\s _1\cap \W )\vee (\s _2\cap \W ))=(\s _1\vee (\V\cap \W ),(\V \cap \W )\vee \s _2)=(\s _1,\s _2)$; and, by the previous statement, if all $\s _1\in \mathcal{V}$ and $\s _2\in \mathcal{W}$ are such that $\s _1\vee \s _2=\s _1\times _s\s _2$ and we also have $\V \vee \W =\V \times _s\W $, then $\varphi \mid _{\mathcal{V}\times \mathcal{W}}:\mathcal{V}\times \mathcal{W}\rightarrow \varphi (\mathcal{V}\times \mathcal{W})$ and $\psi \mid _{\varphi (\mathcal{V}\times \mathcal{W})}:\varphi (\mathcal{V}\times \mathcal{W})\rightarrow \mathcal{V}\times \mathcal{W}$ are mutually inverse bijections, hence they are order isomorphisms by the first statement above;

$\bullet $ if $\s _1\vee \s _2=\s _1\times _s\s _2$ for all $\s _1\in [\V \cap \W ,\V ]_{\Lambda (\U )}$ and all $\s _2\in [\V \cap \W ,\W ]_{\Lambda (\U )}$, in particular if $\U $ is congruence--distributive, then $\varphi $ and $\psi $ are mutually inverse lattice isomorphisms;

indeed, if $\s _1\vee \s _2=\s _1\times _s\s _2$ for all $\s _1\in [\V \cap \W ,\V ]$ and all $\s _2\in [\V \cap \W ,\W ]$, which holds if $\U $ is congruence--distributive according to \cite[Theorem $4.10$]{bj} recalled above, then, by the previous statements, $\varphi $ is surjective, thus $\varphi $ and $\psi $ are mutually inverse order isomorphisms between $[\V \cap \W ,\V ]_{\Lambda (\U )}\times [\V \cap \W ,\W ]_{\Lambda (\U )}$ and $[\V \cap \W ,\V \vee \W ]_{\Lambda (\U )}$, hence they are lattice isomorphisms;

$\bullet $ if $\V \vee \W =\V \times _s\W $, then: $[\V \cap \W ,\V \vee \W ]_{\Lambda (\U )}$ is distributive iff $\varphi $ and $\psi $ are mutually inverse lattice isomorphisms iff $\s _1\vee \s _2=\s _1\times _s\s _2$ for all $\s _1\in [\V \cap \W ,\V ]_{\Lambda (\U )}$ and $\s _2\in [\V \cap \W ,\W ]_{\Lambda (\U )}$;

indeed, the distributivity law proves the first equivalence and we have the right-to-left implication in the second equivalence from the previous statement; now, if $\V \vee \W =\V \times _s\W $ and $[\V \cap \W ,\V \vee \W ]_{\Lambda (\U )}$ is distributive, then, for any $\s _1\in [\V \cap \W ,\V ]_{\Lambda (\U )}$ and $\s _2\in [\V \cap \W ,\W ]_{\Lambda (\U )}$, we have ${\rm Si}(\s _1\vee \s _2)\subseteq {\rm Si}(\V \vee \W )={\rm Si}(\V )\cup {\rm Si}(\W )$, thus ${\rm Si}(\s _1\vee \s _2)={\rm Si}(\s _1\vee \s _2)\cap ({\rm Si}(\V )\cup {\rm Si}(\W ))={\rm Si}((\s _1\vee \s _2)\cap (\V \cup \W ))={\rm Si}(((\s _1\vee \s _2)\cap \V )\cup ((\s _1\vee \s _2)\cap \W ))={\rm Si}((\s _1\vee \s _2)\cap \V )\cup {\rm Si}((\s _1\vee \s _2)\cap \W )={\rm Si}((\s _1\cap \V )\vee (\s _2\cap \V ))\cup {\rm Si}((\s _1\cap \W )\vee (\s _2\cap \W ))={\rm Si}(\s _1\vee (\V \cap \W ))\cup {\rm Si}((\V \cap \W )\vee \s _2)={\rm Si}(\s _1)\cup {\rm Si}(\s _2)$, thus  $\s _1\vee \s _2=\s _1\times _s\s _2$.

Recall that ${\rm At}(\Lambda (\PBZs ))=\{\BA \}$, so, for any $\V \in \Lambda (\PBZs )$, $\Lambda (\V )=\{\T \}\cup [\BA ,\V ]_{\Lambda (\PBZs )}\cong {\bf D}_2\oplus [\BA ,\V ]_{\Lambda (\PBZs )}$. Since \PBZ --lattices are lattice--ordered, $\PBZs $ is congruence--distributive, thus its lattice of subvarieties is distributive, hence, as expected for its intervals, $[\BA ,\OML \vee V_{\BZ }(\AOL )]_{\Lambda (\PBZs )}\cong $\linebreak $[\BA ,\OML ]_{\Lambda (\PBZs )}\times [\BA ,V_{\BZ }(\AOL )]_{\Lambda (\PBZs )}$, with a lattice isomorphism given by $\varphi :[\BA ,\OML ]_{\Lambda (\PBZs )}\times $\linebreak $[\BA ,V_{\BZ }(\AOL )]_{\Lambda (\PBZs )}\rightarrow [\BA ,\OML \vee V_{\BZ }(\AOL )]_{\Lambda (\PBZs )}$ defined by $\varphi (\s _1,\s _2)=\s _1\vee \s _2$ for every $\s _1\in [\BA ,\OML ]$ and $\s _2\in [\BA ,V_{\BZ }(\AOL )]$.

Consequently, since the only covers of $\BA $ in $\Lambda (\PBZs )$ are $V_{\BZ }({\bf MO}_2)$ and $V_{\BZ }({\bf D}_3)$, for any $n\in \N ^*$, the only cover of $V_{\BZ }({\bf MO}_n)$ in $\Lambda (\OML )$ and thus in $[\BA ,\OML ]_{\Lambda (\PBZs )}$ is $V_{\BZ }({\bf MO}_{n+1})$ and, according to \cite[Theorem~$17$]{pbz6}, the only cover of $V_{\BZ }({\bf D}_3)$ in $\Lambda (V_{\BZ }(\AOL ))$ and thus in $[\BA ,V_{\BZ }(\AOL )]_{\Lambda (\PBZs )}$ is $V_{\BZ }({\bf D}_4)$, it follows that the only covers of $V_{\BZ }({\bf D}_3)$ in $\Lambda (\OML \vee V_{\BZ }(\AOL ))$ are $V_{\BZ }({\bf D}_4)$ and $V_{\BZ }({\bf MO}_2)\vee V_{\BZ }({\bf D}_3)=V_{\BZ }(\{{\bf MO}_2,{\bf D}_3\})$, while the only covers of $V_{\BZ }({\bf MO}_n)$ in $\Lambda (\OML \vee V_{\BZ }(\AOL ))$ are $V_{\BZ }({\bf MO}_{n+1})$ and $V_{\BZ }({\bf MO}_n)\vee V_{\BZ }({\bf D}_3)=V_{\BZ }(\{{\bf MO}_n,{\bf D}_3\})$.

If $\mathcal{C}_0$, $\mathcal{C}_1$ and $\mathcal{C}_2$ are the infinite ascending chains of varieties from Lemma \ref{subvarMOL}, Theorem \ref{infmodsaol} and Theorem \ref{notgendist}, respectively, and, for each $i\in [0,2]$ and any variety $\V $ of \PBZ --lattices, we denote by $\mathcal{C}_{i,\V }=\{\V \vee \W :\W \in \mathcal{C}_i\}$, then, for any nontrivial subvarieties $\V $, $\W $ of $V_{\BZ }(\AOL )$ and $\U $, $\Y $ of $\OML $ such that $\V \neq \W $ and $\U \neq \Y $: $\mathcal{C}_{0,\V }$ and $\mathcal{C}_{0,\W }$, respectively $\mathcal{C}_{1,\U }$ and $\mathcal{C}_{1,\Y }$, respectively $\mathcal{C}_{2,\U }$ and $\mathcal{C}_{2,\Y }$ are pairwise disjoint infinite ascending chains of subvarieties of $\MOL \vee V_{\BZ }(\AOL )$, respectively $\OML \vee (\MOD \cap \SAOL )$, respectively $\OML \vee \DIST $; moreover, $\mathcal{C}_{1,\U }$ and $\mathcal{C}_{2,\Y }$ are pairwise disjoint, while $\mathcal{C}_{1,\U }$ and $\mathcal{C}_{2,\U }$ have in common exactly the varieties $\U \vee V_{\BZ }({\bf D}_2)=\U \vee \BA =\U $, $\U \vee V_{\BZ }({\bf D}_3)$ and $\U \vee V_{\BZ }({\bf D}_4)$.

\section*{Acknowledgements}

This work was supported by the research grant {\em Propriet\` a d`Ordine Nella Semantica Algebrica delle Logiche
Non--classiche}, Universit\` a degli Studi di Cagliari,
Regione Autonoma della Sardegna, L. R. 7/2007, n. 7, 2015,\linebreak CUP:F72F16002920002, as well as the research grant number IZSEZO\_186586/1, awarded to the project {\em Re\-ti\-cu\-la\-tions of Concept Algebras} by the Swiss National Science Foundation, within the programme Scientific Exchanges.

I thank Francesco Paoli and Miroslav Plosci\v ca for insightful discussions on some of the issues tackled in this paper.


\begin{thebibliography}{99}
\bibitem{BH} G.~Bruns, J.~Harding, {\em Algebraic Aspects of Orthomodular Lattices}, Current Research in Operational Quantum Logic. Fundamental Theories of Physics {\bf 111}, Kluwer Academic Publishers, Dordrecht, 2000, 37--65. Zbl 0955.06003, MR1907155, DOI:10.1007/978--94--017--1201--9\_\,2.

\bibitem{catmar} G.~Cattaneo, G.~Marino, Brouwer--Zadeh Posets and Fuzzy Set Theory, In A. Di Nola, A. G. S. Ventre (Eds.), {\em Proceedings of Mathematics of Fuzzy Systems}, Napoli, 1984.

\bibitem{catnis} G.~Cattaneo, G.~Nistico, From Unsharp to Sharp Quantum Observables, {\em Journal of Mathematical Physics} {\bf 41} (7) (2000), 4365--4378.

\bibitem{davvar} B.~A.~Davey, On the Lattice of Subvarieties, {\em Houston Journal of Mathematics} {\bf 5} (2) (1979), 183--192.

\bibitem{fouben} D.~J.~Foulis, M.~K.~Bennett, Effect Algebras and Unsharp Quantum Logics, {\em Foundations of Physics} {\bf 24} (1994), 1325--1346. Zbl 1213.06004, MR1304942, DOI:10.1007/BF02283036.

\bibitem{giu} R.~Giuntini, Quantum MV--algebras, {\em Studia Logica} {\bf 56} (1996), 393--417. Zbl 0854.03057, MR1397488, DOI:10.1007/BF00372773.

\bibitem{GLP} R.~Giuntini, A.~Ledda, F.~Paoli, A New View of Effects in a Hilbert Space, {\em Studia Logica} {\bf 104} (2016), 1145--1177. Zbl 06690915, MR3567676, DOI:10.1007/s11225--016--9670--3.

\bibitem{PBZ2} R.~Giuntini, A.~Ledda, F.~Paoli, On Some Properties of \PBZ --lattices, {\em International Journal of Theoretical Physics} {\bf 56}, 12 (2017), 3895--3911. Zbl 1380.81032, MR3721009, DOI:10.1007/s10773--017--3374--y.

\bibitem{rgcmfp} R.~Giuntini, C.~Mure\c san, F.~Paoli, \PBZ --lattices: Structure Theory and Subvarieties, {\em Reports on Mathematical Logic} {\bf 55} (2020), 3--39.

\bibitem{pbzsums} R.~Giuntini, C.~Mure\c san, F.~Paoli, \PBZ --lattices: Ordinal and Horizontal Sums, {\em Algebraic Perspectives on Substructural Logics} {\bf 55}, Trends in Logic Series, Springer Nature, November 7th 2020, 73--105.

\bibitem{pbz6} R.~Giuntini, C.~Mure\c san, F.~Paoli, On \PBZ --lattices, {\em Mathematics, Logic, and their Philosophies: Essays in Honour of Mohammad Ardeshir}, Logic, Epistemology, and the Unity of Science Series, Volume {\em 49}, Springer Nature Switzerland, Editors: Mojtaba Mojtahedi, Shahid Rahman, Mohammad Saleh Zarepour, 2021, 313--337.

\bibitem{gralgu} G.~Gr\" atzer, {\em Universal Algebra}, Second Edition, Springer Science+Business Media, LLC, New York, 2008. Zbl 1143.08001, MR2455216, DOI:10.1007/978--0--387--77487--9.

\bibitem{groote} H.~F.~de~Groote, On a Canonical Lattice Structure on the Effect Algebra of a Von Neumann Algebra (2005), arXiv:math/0410018v2.

\bibitem{bj} B.~J\' onnson, Congruence--distributive Varieties, {\em Math. Japonica} \textbf{42}, No. 2 (1995), 353--401. Zbl 0841.08004, DOI:10.1007/BF02482916.

\bibitem{kal} J.~A.~Kalman, Lattices with Involution, {\em Transactions of the American Mathematical Society} {\bf 87} (1958), 485--491. Zbl 0228.06003, MR0095135.

\bibitem{mose} L.~Moln\' ar, P.~\v Semrl, Spectral Order Automorphisms of the Spaces of Hilbert Space Effects and Observables, {\em Letters in Mathematical Physics} {\bf 80} (3) (2007), 239--255.

\bibitem{eunoucard} C.~Mure\c san, Cancelling Congruences of Lattices, While Keeping Their Numbers of Filters and Ideals, {\em South American Journal of Logic} (July 2020).

\bibitem{eucardbi} C.~Mure\c san, Some Properties of Lattice Congruences Preserving Involutions and Their Largest Numbers in the Finite Case, arXiv:1802.05344v3 [math.RA], to appear in the {\em Houston Journal of Mathematics}.

\bibitem{euinfbi} C.~Mure\c san, A Note on Congruences of Infinite Bounded Involution Lattices, {\em Scientific Annals of Computer Science} {\bf XXXI}, Issue 1 (2021), 51--78.\end{thebibliography}
\end{document}